\documentclass[11pt]{article}
\usepackage{amsmath,amssymb,amsfonts,amsthm}
\setlength{\parindent}{18pt}
\textwidth15cm
\textheight21.5cm
\newcounter{item}[section]
\newcounter{kirshr}
\newcounter{kirsha}
\newcounter{kirshb}
\newenvironment{enumroman}{\setcounter{kirshr}{1}
\begin{list}{(\roman{kirshr})}{\usecounter{kirshr}} }{\end{list}}
\newenvironment{enumarab}{\setcounter{kirshb}{1}

\begin{list}{(\arabic{kirshb})}{\usecounter{kirshb}} }{\end{list}}
\newtheorem{theorem}{Theorem}[section]

\newtheorem{lemma}[theorem]{Lemma}
\newtheorem{corollary}[theorem]{Corollary}
\newenvironment{demo}[1]{\noindent{\bf #1.}\upshape\mdseries}
{\nopagebreak{\hfill\rule{2mm}{2mm}\nopagebreak}\par\normalfont}
\theoremstyle{definition}

\newtheorem{example}[theorem]{Example}
\newtheorem{definition}[theorem]{Definition}

\def\Nr{{\sf{Nr}}}

\def\Sg{{\mathfrak{Sg}}}
\def\Fm{{\mathfrak{Fm}}}
\def\H{{\mathfrak{H}}}
\def\K{{\mathfrak{K}}}
\def\CA{{\bf CA}}
\def\RCA{{\bf RCA}}

\def\QA{{\bf QA}}
\def\QEA{{\bf QEA}}
\def\Rd{{\ Rd}}
\def\QEA{{\bf PEA}}

\def\(R)RA{{\bf (R)RA}}
\def\RA{{\bf RA}}

\def\R{\mathbb{R}}
\def\N{\mathbb{N}}

\def\C{\mathbb{C}}
\def\A{{\mathfrak{A}}}
\def\B{{\mathfrak{B}}}
\def\C{{\mathfrak{C}}}
\def\D{{\mathfrak{D}}}
\def\R{{\mathfrak{R}}}
\def\P{{\mathfrak{P}}}
\def\L{{\mathfrak{L}}}
\def\Rd{{\mathfrak{Rd}}}

\def\T{{\bf T}}

\def\T{{\bf T}}

\def\RK{{\bf RK}}
\def\Ra{{\sf Ra}}
\def\set#1{ \{#1\}}
\def\Mo{{\sf Mo}}
\def\restr #1{{\restriction_{#1}}}
\def\ws{winning strategy}

 \def\CA{{\sf CA}}
\def\B{{\sf B}}
\def\G{{\sf G}}
\def\w{{\sf w}}
\def\y{{\sf y}}
\def\g{{\sf g}}
\def\b{{\sf b}}
\def\r{{\sf r}}
\def\K{{\sf K}}

\def\pa{$\forall$}
\def\pe{$\exists$}

\def\ef{Ehren\-feucht--Fra\"\i ss\'e}

\def\M{{\mathfrak{M}}}

\def\Ca{{\mathfrak{Ca}}}

\def\M{{\mathfrak{M}}}

\def\A{{\mathfrak{A}}}
\def\B{{\mathfrak{B}}}
\def\C{{\mathfrak{C}}}
\def\D{{\mathfrak{D}}}
\def\Ig{{\mathfrak{Ig}}}
\def\M{{\mathfrak{M}}}
\def\dom{{\sf dom}}
\def\rng{{\sf rng}}
\def\Rd{{\sf{Rd}}}
\def\At{{\sf At}}
\def\Ra{{\mathfrak{Ra}}}
\def\Tm{{\mathfrak{Tm}}}
\def\Cm{{\mathfrak{Cm}}}
\def\E{{\mathfrak{E}}}
\def\Bl{{\mathfrak{Bl}}}

\def\ef{Ehren\-feucht--Fra\"\i ss\'e}

\def\Id{{\sf Id}}
\def\Rl{{\mathfrak{Rl}}}
\def\F{{\mathfrak{F}}}
\def\Sc{{\mathfrak{Sc}}}
\def\QEA{{\sf QEA}}
\def\RQEA{{\sf RQEA}}
\def\RCA{{\sf RCA}}
\def\QEA{{\bf PEA}}
\def\PA{{\bf PA}}
\def\R{{\sf R}}
\def\L{{\sf L}}

\def\U{\mathfrak{U}}

\def\Df{{\sf Df}}

\def\Rd{{\mathfrak{Rd}}}
\def\PEA{{\sf QEA}}
\def\la#1{\langle#1\rangle}

\def\s{{\sf s}}
\def\Sc{{\sf Sc}}
\def\nodes{{\sf nodes}}

\def\c{{\sf c}}

\def\T{{\sf T}}
\def\QEA{{\sf PEA}}

\def\cyl#1{{\sf c}_{#1}}

\def\cyl#1{{\sf c}_{#1}}
\def\sub#1#2{{\sf s}^{#1}_{#2}}
\def\diag#1#2{{\sf d}_{#1#2}}

\def\V{{\sf V}}
\def\de{Dedekind-MacNeille}
\def\PA{{\sf PA}}
\def\QA{{\sf QA}}
\def\RK{{\sf RK}}
\def\Z{{\mathbb{Z}}}

\def\Bb{{\sf Bb}}

\def\Ra{{\sf Ra}}
\def\Mo{{\sf M}}
\def\QEA{{\sf QEA}}\def\Mat{{\sf Mat}}

\def\RA{{\sf RA}}
\def\G{{\sf G}}
\def\GGG{{\cal{G}}}
\def\T{{\cal T}}
\def\b#1{{\bar{ #1}}}

\def\w{{\sf w}}
\def\g{{\sf g}}
\def\y{{\sf y}}
\def\r{{\sf r}}
\def\bb{{\sf b}} 

\title{Splitting methods in algebraic logic\\
{\it Proving results on non--atom--canonicity, non--finite axiomatizability and non--first oder 
definability for cylindric and relation algebras}}
\author{Tarek Sayed Ahmed\\
Department of Mathematics, Faculty of Science,\\ 
Cairo University, Giza, Egypt.
 }
%
\begin{document}
\maketitle

\begin{abstract}
We deal with various splitting methods in algebraic logic. The word `splitting' refers to splitting some of the atoms in a given relation or  cylindric algebra 
each into one or more subatoms obtaining a bigger algebra, where the number of subatoms obtained after 
splitting is adjusted for a certain combinatorial purpose. This number (of subatoms) can be an infinite cardinal. 
The idea  originates with Leon Henkin.
Splitting methods existing in a scattered form in the literature, possibly under different names, 
proved useful in obtaining (negative) results on non--atom canonicity, non--finite axiomatizability and non--first order
definability for various classes of relation and cylindric algebras.
In a unified framework, we give several known and new examples of each. Our framework covers Monk's splitting, Andr\'eka's splitting,
and, also,  so--called blow up and blur constructions involving splitting (atoms) in 
finite Monk--like algebras and 
rainbow algebras. 
\footnote{ 2000 {\it Mathematics Subject Classification.} Primary 03G15.
{\it Key words}: Algebraic logic, cylindric and relation algebras, 
splitting, blow up and blur, non atom--canonicity, non-finite axiomatizability.} 
\end{abstract}

\section{Introduction}

Fix $2<n<\omega$. We deal with relation algebras $\RA$s and many cylindric--like algebras like Pinter's algebras and quasi-polyadic algebras (with and without equality).
We focus in the introduction only on cylindric algebras of dimension $n$ ($\CA_n$s).
The idea of splitting one or more atoms in an  algebra to get a (bigger) superalgebra, where splitting is understood as in the first line of the abstract,   
originates with Henkin \cite[p.378, footnote 1]{HMT1}.
The first major use of splitting techniques proving that 
the varieties $\sf RRA$ and  $\RCA_n$ (representable $(\CA_s$)
are not finitely axiomatizable is due to Monk.  Monk proved his seminal results on $\RA$s and $\CA_n$ by constructing finite non--representable 
algebras (referred to together with variations thereof in the literature as Monk--like algebras), whose ultraproduct is representable.

The idea involved in the construction of a non--representable finite Monk (--like) $\A\in \CA_n$s is not so hard.
Such $\A$ is finite, hence atomic, more precisely its Boolean reduct is  atomic.
The algebra $\A$ is obtained by splitting some atoms in a finite $\CA_n$ each into one or more subatoms.
The new atoms are given colours, and
cylindrifications and diagonals are re-defined by stating that monochromatic triangles
are inconsistent. If the atoms resulting after splitting are `enough', that is, a Monk's algebra has many more atoms than colours,
it follows by using a fairly standard form of 
Ramsey's Theorem that any representation of $\A$ will contain a monochromatic triangle, 
so $\A$, 
by definition, cannot be representable.

The second major modification of splitting methods is due to Jonsson who 'splitted atoms in relation algebras', showing that $\sf RRA$ is not {\it only not fninitely axiomatizable},
but, in fact any universal axiomatization of $\sf RRA$ must contain infinitely many variables. Plainly this result is substantialy stronger than Monk's result.

In the cylindric paradigm, And\'reka modified such splitting methods {\it re-inventing} (Andreka's) splitting. 
In this new setting,   Andr\'eka proved a plethora of 
{\it relative non--finite axiomatizability results} in the following sense.

Let $\K$ be a variety having signature $t$, and let $\L$ be a variety having signature $t'\subseteq t$, such that if 
$\A\in \K$ then the reduct of $\A$ obtained by discarding the operations in $t\sim t'$, $\Rd_{t'}\A$ for short,  is in $\L$.
We say that a set of first order formulas
$\Sigma$ in the signature $t$ axiomatizes $\K$ over $\L$, if  
for any algebra $\A$ in the signature $t$ whenever 
$\A\models \Sigma$ and $\Rd_{t'}\A\in \L$, then $\A\in \K$. This means that $\Sigma$ `captures' the properties of the operations in $t\sim t'$. 
A relative non--finite axiomatizability result is of the form:  There is no set `of a special form' of 
first order formulas satisfying a `finitary condition' that axiomatizes
$\K$ over $\L$.  Such special forms may be equations, or universal formulas. By finitary, we exclusively mean that $\Sigma$ is finite (this makes no sense if the signature at hand 
is infinite), or $\Sigma$ is a finite schema in the sense of Monk's schema \cite{Monk}, \cite[Definition 5.6.11-5.6.12]{HMT2},  
or $\Sigma$ contains only finitely many variables. The last two cases apply equally well to varieties having infinite signature
like $\RCA_{\omega}$. In the last case a finite schema  
is understood in the sense of \cite[Definition 4.1.4]{HMT2} namely,  in a two-sorted sense, one sort for ordinals $<\omega$, the other sort for the usual 
first order situation.  

A typical result of the last form is that for $2<n<\omega$, there is no set of universal formulas containing 
only finitely many variables that axiomatizes 
the variety of representable polyadic equality algebras of dimension $n$  over the variety 
of representable polyadic equality algebras of the same dimension \cite{Andreka}; a result that we lift the transfinite.
We use a yet other modified version of the method of 
`splitting atoms'  invented by Andr\'eka in \cite{Andreka}. Relative non--finite axiomatizability results excluding universal formulas containing only finitely many variables 
are obtained by  by splitting atoms in representable algebras, thereby obtaining non--representable 
ones having various representable proper subreducts.   

Though splitting techniques are associated with non--finite axiomatizability results, 
in this paper we argue and indeed demonstrate 
that there are several subtle re--incarnations of this technique existing in the literature,
proving results on notions like {\it non--atom canonicity and non--finite first order definability (definitions are provided below).}
Furthermore, the construction used in all 
such cases involves a variation on the following single theme:

{\bf Split some (possibly all) atoms in an  algebra (that need not be finite nor even atomic)  
each into one or more subatoms forming a bigger superalgebra that constitutes the starting point 
for serving  the purpose at hand.}

We also provide new ones.

Let ${\sf Nr}_n$ denote the operator of forming $n$--neat reducts and $\sf Ra$ 
denote the operator of forming relation algebra reducts. 
Here the word `splitting'  refers to splitting one or more atoms in a given 
$\RA$ or $\CA_n$ into several (possibly infinite) subatoms obtaining a bigger algebra. 

Our first family of examples involve so--called blow--up-and blur constructions, where some of the atoms in a finite 
algebra is each split to infinitely many subatoms, giving a weakly representable atom structure. 
Such technique proves useful in showing non--atom canonicity
for varieties of $\RA$s and $\CA_n$.

The second splitting 
method encountered will be splitting only one atom in a representable 
$\CA_n$ to finitely many getting a non--representable one. 
Such technique proves useful for proving non--finite axiomatizability results for varieties of $\CA_n$s, like 
$\RCA_n$s and its approximations $\bold S\Nr_n\CA_{n+k}$. 

The third splitting method proves useful in proving non--first oder definability of a given class $\bold K$.
Here the atoms of a finite algebra are split twice,  
getting two elementary equivalent infinite algebrs $\A$ and $\B$ such that 
$\A\in \bold K$ and $\B\notin \bold K$. This will be applied to the cass when $\bold K=\Nr_n\CA_{\omega}$ and $\bold K=\Ra\CA_{\omega}$.

Finally, given a finite ordinal $n>4$, 
we deal with splitting atoms in finite relation algebras, each into finitely many subatoms getting relations algebras 
that have $n$--locally `well behaved' representations
where the degree of `well behaveness' is measured by $n$. We show following Hirsch and Hodkinson, that such sophisticated splitting 
techniques give rise to  relation algebras that have $n$--well behaved representations, but no $n+1$--well behaved 
ones. 

\section{Preliminaries}

\subsection{Basics}

Algebras will be denoted by
Gothic letters, and when we write $\A$, then we will be tacitly assuming
that  the corresponding Roman letter $A$  denotes  the universe
of $\A$.
However, in some occasions we will identify (notationally)
an algebra with its universe. 
Fix an ordinal $\alpha$.

{\bf Cylindric--like algebras:} We write $\PA_\alpha$ $(\PEA_\alpha)$ and $\Sc_\alpha$ shrt hand for the classes of polyadic algebras
(with equality) and Pinter's algebras of dimension $\alpha$. $\sf Df_\alpha$ denotes the class of diagonal free cylindric algebras of
dimension $\alpha$. Here the extra non--Boolean operations are just the $\alpha$--cylindrifiers, so all algebras considered have a $\sf Df$ reduct.
The standard reference for all such classes of cylindric--like algebras is \cite{HMT2}.

We use the notation $\QA_{\alpha}$, ($\QEA_{\alpha}$) for the class
of quasi--polyadic (equality) algebras of dimension $\alpha$.
These are term definitionally equivalent to $\PA_\alpha (\PEA_\alpha)$ when $\alpha<\omega$, 
but for $\alpha\geq \omega$, they
are not quite like $\PA_{\alpha}$ ($\PEA_{\alpha})$, for their signature
contains only substitutions  indexed by replacements and transpositions, witness Figure 1. 
We deal only $\QA$s and $\QEA$s.

Given an algebra $\A$, $\Rd_{ca}\A$ denotes the cylindric reduct of $\A$ if it has one, $\Rd_{sc}\A$
denotes the $\Sc$ reduct of $\A$ if it has one, and
$\Rd_{df}\A$ denotes the reduct of $\A$ obtained by discarding all the operations except for cylindrifiers.
It is always the case that $\Rd_{df}\A\in \sf Df_{\alpha}$.

For any $\sf K$ of the above cylindric--like classes, and any ordinal $\alpha$, 
$\sf RK_\alpha$ denotes the class of representable $\K_\alpha$s. By the same token 
$\RA$ denotes the class of relation algebras and $\sf RRA$ denotes the class 
of   representable $\RA$s.
We write $\subseteq$ for inclusion, and $\subsetneq$ for
proper inclusion. $\prod$ denotes infimum, and $\sum$ denotes (its dual) supremum.  For algebras $\A$ and $\B$ having a Boolean reduct,
we write $\A\subseteq _c \B$, if for all $X\subseteq \A$, whenever $\sum^{\A} X=1$,
then $\sum^{\B} X=1$. We say that $\A$ is a {\it strong subalgebra of $\B$}.  We avoid the possibly confusing term 
`complete subalgebra'  for this lends itself to the interpretation that $\A$ is a complete algebra, 
and a subalgebra of $\B$.   
This is plainly what is not meant by $\subseteq_c$. 
For a class $\bold K$ having a Boolean reduct,
we write $\bold S_c\bold K$ for $\{\A: (\exists \B\in \K) \A\subseteq_c \B\}$.

\begin{figure}
\[\begin{array}{l|l}
\mbox{class}&\mbox{extra operators}\\
\hline
\Df_\alpha&\cyl i:i<\alpha\\
\Sc_\alpha& \cyl i, \s_i^j :i, j<\alpha\\
\CA_\alpha&\cyl i, \diag i j: i, j<\alpha\\
\PA_\alpha&\cyl i, \s_\tau: i<\alpha,\; \tau\in\;{}^\alpha\alpha\\
\PEA_\alpha&\cyl i, \diag i j,  \s_\tau: i, j<\alpha,\;  \tau\in\;{}^\alpha\alpha\\
\QA_\alpha, &  \cyl i, \s_{[i/j]}, \s_{[i, j]} :i, j<\alpha  \\
\QEA_\alpha &\cyl i, \diag i j, \s_i^j, \s_{[i, j]}: i, j<\alpha
\end{array}\]
\caption{Non-Boolean operators for the classes\label{fig:classes}}
\end{figure}

{\bf Set algebras and neat embeddings:} 
For cylindric set algebras of dimension $\alpha$, 
we follow the notation of \cite{HMT2}, often without warning. 
For example $\sf Cs_\alpha$ stands for the class of cylindric set agebras of dimension $\alpha$, $\sf Gs_\alpha$ stands for the class of generalized set algebras of dimenson $\alpha$.
It is known that $\RCA_\alpha=\bold I \sf Gs_\alpha$.

The class $\sf D_\alpha$ is the class of set algebras having the same signature as $\CA_\alpha$; if $\A\in \sf D_\alpha$, then  the 
top element of $\A$ is a set of $\alpha$--ary sequences $V\subseteq {}^\alpha U$ (some non--empty set $U$),
such that if $s\in V$,  and $i<j<\alpha$, then $s\circ [i|j]\in V$. 
The operations of $\A$ with top element $V$, whose domain is a subset of $\wp(V)$, 
are like the operations in cylindric set algebras of dimension $\alpha$,
but relativized to the top element $V.$  

In our treatment of the notion of neat reducts, and the related one of neat embeddings, 
we follow the terminolgy and notation of \cite{HMT2, Sayedneat}.
The notion of {\it neat reducts} and the related one of {\it neat embeddings} are both important in algebraic logic for the
very simple reason that they are very much tied
to the notion of representability, via the so--called neat embedding theorem of Henkin's which says that (for any ordinal 
$\alpha$),  we have $\sf RCA_{\alpha}=\bold S\Nr_{\alpha}\CA_{\alpha+\omega}$.
\begin{definition} 
Assume that $\alpha<\beta$ are ordinals, that $\K$ is any class between $\Df$ and $\QEA$, and that 
$\B\in \K_{\beta}$. Then the {\it $\alpha$--neat reduct} of $\B$, in symbols
$\mathfrak{Nr}_{\alpha}\B$, is the
algebra obtained from $\B$, by discarding
cylindrifiers and diagonal elements whose indices are in $\beta\sim \alpha$, and restricting the universe to
the set $Nr_{\alpha}B=\{x\in \B: \{i\in \beta: {\sf c}_ix\neq x\}\subseteq \alpha\}.$
\end{definition}

It is straightforward to check that $\Nr_{\alpha}\B\in \K_{\alpha}$. 
If $\A\in \K_\alpha$ and $\A\subseteq \mathfrak{Nr}_\alpha\B$, with $\B\in \K_\beta$, then we say that $\A$ {\it neatly embeds} in $\B$, and 
that $\B$ is a {\it $\beta$--dilation of $\A$}, or simply a {\it dilation} of $\A$ if $\beta$ is clear 
from context. 

For relation algebra reducts we follow \cite{HHbook2}. 
If $\beta>2$, $\B\in \CA_{\beta}$ and 
$\R=\Ra\B$ is the  algebra with $\RA$ signature defined like in \cite[Definition 5.3.7]{HMT2}, 
we refer to $\B$ also as a $\beta$--dilation of $\R$; 
if $\beta>3$, then it is known that $\Ra\B\in \RA$ \cite[Theorem 5.3.8]{HMT2}.

{\bf Atom structures and atom--canonicity:} 
We recall the notions of {\it atom structures} and {\it complex algebra} in the framework of Boolean algebras 
with operators of which $\CA$s  are a special case \cite[Definition 2.62, 2.65]{HHbook}. 
The action of the non--Boolean operators in a completely additive (where operators distribute over arbitrary joins componentwise) 
atomic Boolean algebra with operators, $(\sf BAO)$ for short, is determined by their behavior over the atoms, and
this in turn is encoded by the {\it atom structure} of the algebra.

\begin{definition}\label{definition}(\textbf{Atom Structure})
Let $\A=\langle A, +, -, 0, 1, \Omega_{i}:i\in I\rangle$ be
an atomic $\sf BAO$ with non--Boolean operators $\Omega_{i}:i\in I$. Let
the rank of $\Omega_{i}$ be $\rho_{i}$. The \textit{atom structure}
$\At\A$ of $\A$ is a relational structure
$$\langle At\A, R_{\Omega_{i}}:i\in I\rangle$$
where $At\A$ is the set of atoms of $\A$ 
and $R_{\Omega_{i}}$ is a $(\rho(i)+1)$-ary relation over
$At\A$ defined by
$$R_{\Omega_{i}}(a_{0},
\cdots, a_{\rho(i)})\Longleftrightarrow\Omega_{i}(a_{1}, \cdots,
a_{\rho(i)})\geq a_{0}.$$
\end{definition}
\begin{definition}(\textbf{Complex algebra})
Conversely, if we are given an arbitrary first order structure
$\mathcal{S}=\langle S, r_{i}:i\in I\rangle$ where $r_{i}$ is a
$(\rho(i)+1)$-ary relation over $S$, called an {\it atom structure}, we can define its
\textit{complex
algebra}
$$\Cm(\mathcal{S})=\langle \wp(S),
\cup, \setminus, \phi, S, \Omega_{i}\rangle_{i\in
I},$$
where $\wp(S)$ is the power set of $S$, and
$\Omega_{i}$ is the $\rho(i)$-ary operator defined
by$$\Omega_{i}(X_{1}, \cdots, X_{\rho(i)})=\{s\in
S:\exists s_{1}\in X_{1}\cdots\exists s_{\rho(i)}\in X_{\rho(i)},
r_{i}(s, s_{1}, \cdots, s_{\rho(i)})\},$$ for each
$X_{1}, \cdots, X_{\rho(i)}\in\wp(S)$.
\end{definition}
It is easy to check that, up to isomorphism,
$\At(\Cm(\mathcal{S}))\cong\mathcal{S}$ alway. If $\A$ is
finite then of course
$\A\cong\Cm(\At\A)$. 
For algebras $\A$ and $\B$ having the same signature expanding that of Boolean algebras, 
we say that $\A$ is dense in $\B$ if $\A\subseteq \B$ and for all non--zero $b\in \B$, there is a non--zero 
$a\in A$ such that $a\leq b$.

An atom structure will be denoted by $\bf At$.  An atom structure $\bf At$ has the signature of $\K_\alpha$, $\alpha$ an ordinal, 
if  $\Cm\bf At$ has the signature of $\K_\alpha$. 

\begin{definition}\label{canonical} 
Let $V$ be a variety of $\sf BAO$s. Then $V$ is {\it atom--canonical} if whenever $\A\in V$ and $\A$ is atomic, then $\mathfrak{Cm}\At\A\in V$.
The {\it  \de\ completion} of  $\A\in \V$, is the unique (up to isomorphisms that fix $\A$ pointwise)  complete  
$\B\in \K_n$ such that $\A\subseteq \B$ and $\A$ is {\it dense} in $\B$. 
\end{definition}

From now on fix $2<n<\omega$ and $\K$ any variety having signature between $\Df$ and $\QEA$.
An atom structure will be denoted by $\bf At$.  An atom structure $\bf At$ has the signature of $\K_n$,  
if  $\Cm\bf At$ has the signature of $\K_n$, in which case we say that $\bf At$ is an $n$--dimensional atom structure of type 
$\K$.  If $\A\in \K_n$ is atomic and completely additive (the non--Boolean unary operations distribute over joins),  then 
the complex algebra of its atom structure, in symbols $\mathfrak{Cm}\At\A$ 
is the {\it \de\ completion of $\A$.}
If $\A\in \K_n$, then its atom structure will be denoted by $\At\A$ with domain the set of atoms of $\A$ denoted by $At\A$.

{\it Non atom--canonicity} can be proved 
by finding {\it weakly representable atom structures} that are not {\it strongly representable}.

\begin{definition} An atom structure $\bf At$ of dimension $n$ and type $\K$ is {\it weakly representable} if there is an atomic 
$\A\in \RK_{n}$ such that $\At\A=\bf At$.  The atom structure  $\bf At$ is {\it strongly representable} if for all $\A\in \K_{n}$, 
$\At\A={\bf At} \implies \A\in {\sf RK}_n$.
\end{definition}
These two notions (strong and weak representability) do not coincide for cylindric algebras as proved by Hodkinson \cite{Hodkinson}.
In theorem \ref{can}, we generalize Hodkinson's result by showing that for any $\K$ having signature between $\Sc$ and $\QEA$, 
there are two atomic $\K_n$s sharing the same atom structure, one is representable and the other its \de\ completion is even outside 
$\bold S{\sf N}r_n\K_{n+3}(\supsetneq {\sf RK}_n$). In particular, there is a complete algebra outside $\bold S{\sf Nr}_n\K_{n+3}$ 
having a dense representable  subalgebra, so that 
$\bold S{\sf Nr}_n\K_{n+3}$ is not atom--canonical.

{\bf Games, networks and rainbows:}
To define certain games to be used in the sequel,  
we recall the notions of {\it atomic networks} and {\it atomic games} \cite{HHbook, HHbook2}. We require that 
networks that are `symmetric':

\begin{definition}\label{game} Fix finite $n>1$. 

(1) An {\it $n$--dimensional atomic network} on an atomic algebra $\A\in \QEA_n$  is a map $N: {}^n\Delta\to  \At\A$, where
$\Delta$ is a non--empty set of {\it nodes}, denoted by $\nodes(N)$, satisfying the following consistency conditions for all $i<j<n$: 
\begin{itemize}
\item If $\bar{x}\in {}^n\nodes(N)$  then $N(\bar{x})\leq {\sf d}_{ij}\iff x_i=x_j$,
\item If $\bar{x}, \bar{y}\in {}^n\nodes(N)$, $i<n$ and $x\equiv_i y$, then  $N(\bar{x})\leq {\sf c}_iN(\bar{y})$,
\item (Symmetry): if $\bar{x}\in {}^n\nodes(N)$, then  $\s_{[i, j]}N(\bar{x})=N(\bar{x}\circ [i, j]).$
\end{itemize}
Let $i<n$. For $n$--ary sequences $\bar{x}$ and $\bar{y}$ and $n$--dimensional atomic networks $M$ and $N$,  we write $\bar{x}\equiv_ i\bar{y}$ $\iff \bar{y}(j)=\bar{x}(j)$ for all $j\neq i$
and we write $M\equiv_i N\iff M(y)=N(y)$ for all $\bar{y}\in {}^{n}(n\sim \{i\})$.

(2)   Assume that $\A\in \QEA_n$ is  atomic and that $m, k\leq \omega$. 
The {\it atomic game $G^m_k(\At\A)$, or simply $G^m_k$}, is the game played on atomic networks
of $\A$ using $m$ nodes and having $k$ rounds \cite[Definition 3.3.2]{HHbook2}, where
\pa\ is offered only one move, namely, {\it a cylindrifier move}: 

Suppose that we are at round $t>0$. Then \pa\ picks a previously played network $N_t$ $(\nodes(N_t)\subseteq m$), 
$i<n,$ $a\in \At\A$, $x\in {}^n\nodes(N_t)$, such that $N_t(\bar{x})\leq {\sf c}_ia$. For her response, \pe\ has to deliver a network $M$
such that $\nodes(M)\subseteq m$,  $M\equiv _i N$, and there is $y\in {}^n\nodes(M)$
that satisfies $\bar{y}\equiv _i \bar{x}$ and $M(y)=a$.  

(3) We write $G_k(\At\A)$, or simply $G_k$, for $G_k^m(\At\A)$ if $m\geq \omega$.

(4) The $\omega$--rounded game $F^m(\At\A)$ or simply $F^m$ is like the game $G_{\omega}^m(\At\A)$ 
except that \pa\ has the option 
to reuse the $m$ nodes in play.
\end{definition}
\begin{lemma}\label{rep} Let $2<n<\omega$ and $\A\in {\sf QEA}_n$ be atomic with countable many atoms. 
Then \pe\ has a \ws\ in $G_{\omega}(\At\A)\iff \A$ is completely representable. In particular, if $\A$ is finite, then 
\pe\ has a \ws\ in $G_{\omega}(\At\A)\iff \A$ 
is representable.
\end{lemma}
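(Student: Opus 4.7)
I prove both directions; the finite case then drops out.

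($\Leftarrow$) Suppose $h:\A\to\wp(V)$ is a complete representation with $V\subseteq{}^nU$. Completeness gives $V=\bigcup_{a\in At\A}h(a)$, so each $\bar u\in V$ has a unique \emph{atom label} $\alpha(\bar u)\in At\A$. \pe's strategy is to maintain, in tandem with the networks $N_t$ played so far, an assignment $\iota_t:\nodes(N_t)\to U$ with $N_t(\bar x)=\alpha(\iota_t\circ\bar x)$ for all $\bar x\in{}^n\nodes(N_t)$. Faced with a cylindrifier move $(N_t,\bar x,i,a)$ satisfying $N_t(\bar x)\leq\cyl i a$, she uses the representability identity $h(\cyl i a)=\{\bar v\in V:\exists u\in U,\ \bar v[i/u]\in h(a)\}$ to pick $u\in U$ and extends $\iota_t$ by a fresh node sent to $u$; the induced labelled network $M$ meets the consistency and symmetry clauses of Definition \ref{game} because $h$ is a $\QEA_n$-embedding. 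Infinitely many fresh nodes are always available since $m\geq\omega$.

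($\Rightarrow$) Assume \pe\ has a \ws\ $\sigma$ in $G_\omega(\At\A)$. The plan is a standard saturation construction: enumerate in order type $\omega$ all triples $(\bar x,i,a)\in({}^n\omega)\times n\times At\A$ so that each appears cofinally often, and at round $t$ let \pa\ play the $t$-th such demand (skipping those illegal at the current network). \pe\ responds using $\sigma$, yielding an ascending chain $N_0\subseteq N_1\subseteq\cdots$ of consistent atomic networks whose union $N$ is defined on all of ${}^n\omega$ and is \emph{saturated}: whenever $N(\bar x)\leq\cyl i a$ there is $u\in\omega$ with $N(\bar x[i/u])=a$. Define $h(a)=\{\bar x\in{}^n\omega:N(\bar x)=a\}$ for $a\in At\A$ and extend by complete additivity to all of $\A$. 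The consistency axioms for diagonals and cylindrifiers, together with saturation, make $h$ a $\CA_n$-monomorphism; the symmetry clause $\s_{[i,j]}N(\bar x)=N(\bar x\circ[i,j])$ transfers the transpositions $\s_{[i,j]}$ to $h$, giving a $\QEA_n$-embedding. Since every tuple carries a unique atom label, $h$ is a complete representation.

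The finite case is immediate: any representation of a finite $\A$ is automatically complete because all relevant suprema reduce to finite joins, which Boolean homomorphisms preserve. The main obstacle is the ($\Rightarrow$) direction, and specifically the bookkeeping needed to dovetail demands that arise from networks produced by $\sigma$ at intermediate stages back into the enumeration; countability of $At\A$ (and hence of the set of all future demands) is essential, as it is what makes such a dovetailing possible. Once saturation is in hand, the translation from network axioms to $\QEA_n$ identities for $h$ is routine.
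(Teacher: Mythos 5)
The paper itself proves this lemma only by citation to \cite[Theorem 3.3.3]{HHbook2}, and your argument is essentially the standard proof lying behind that citation: read off a \ws\ from the atom labels of a complete representation in one direction, and run a scheduled $\omega$-round play against the \ws\ to build a saturated network in the other. The $(\Leftarrow)$ direction and the finite case are fine as you state them.

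There is, however, a genuine gap in your $(\Rightarrow)$ direction: you build a \emph{single} saturated network $N$ on nodes $\omega$ and set $h(a)=\{\bar x: N(\bar x)=a\}$. This $h$ need not be injective, so it need not be a representation at all. Concretely, if $\A=\A_1\times\A_2$ is a nontrivial direct product, then the consistency clause $N(\bar y)\leq {\sf c}_iN(\bar x)$ forces all labels of a single network to live in one factor (if $N(\bar x)=(a,0)$ then every $i$-neighbour is labelled below $({\sf c}_ia,0)$, and by connectedness every label of $N$ has second coordinate $0$), so $h$ annihilates every atom of the other factor. The standard repair, which is what the cited theorem actually does, is to play one game per atom $a\in\At\A$ (letting \pa\ open with $a$, so that $a$ is realized in the resulting saturated network $N_a$), obtain a homomorphism $h_a$ for each, and take the disjoint union of the base sets; the resulting map into a ${\sf Gs}_n$ is then injective because each atom is realized somewhere, and it is complete because every tuple of every component carries an atom label. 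This uses countability of $\At\A$ a second time (countably many games, each with countably many rounds), beyond the dovetailing of demands you already note. With that amendment your argument goes through; the transfer of the symmetry clause to the substitutions $\s_{[i,j]}$ is routine as you say.
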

\begin{proof} \cite[Theorem 3.3.3]{HHbook2}. 
\end{proof} 
Strictly speaking, \cite[Theorem 3.3.3]{HHbook2} is formulated for 
$\CA_n$s, but it can be easily checked that it works for $\QEA_n$s. 

Fo rainbow constructions on relation algebras, we follow the text book 
\cite{HHbook}.  
The construction for both cases (relation algebras and $\CA$s) is based on two relational structure $\sf G$ (the greens) and $\sf R$ (the reds).
For rainbow cylindric algebras, we use the graph version of the above defined games
played on coloured graphs \cite{HH} using the correspondence between networks and coloured graphs.
 
A \ws\ in both games -- for relation and cylindric algebras --of either player is dictated by a \ws\ of the same player in a simple private \ef\ forth  
game played on the relational structures $\sf G$ and $\sf R$ denoted by ${\sf EF}_r^p(\sf G, R)$ where 
$r$ is the number of rounds and $p$ is the number of pebble pairs 
in play \cite[Definition 16.2]{HHbook2}. 

Given relational structures 
$\sf G$ and $\sf R$ the rainbow 
atom structure of the rainbow $\CA_n$ consists of equivalence classes of surjective maps $a:n\to \Delta$, where $\Delta$ is a coloured graph
in the rainbow signature, and the equivalence relation relates two such maps $\iff$  they essentially define the same graph \cite[4.3.4]{HH};
the nodes are possibly different but the graph structure is the same. We let $[a]$ denote the equivalence class containing $a$.
The accessibility binary relation corresponding 
to the $i$th  cylindrifier $(i<n)$ is defined by:  $[a] T_i [b]\iff a\upharpoonright n\sim \{i\}=b\upharpoonright n\sim \{i\},$ 
and the accessibility unary relation corresponding to the $ij$th diagonal element ($i<j<n$) 
is defined by: $[a]\in D_{ij}\iff a(i)=a(j)$. We consider {\it quasi--polyadic equality atom structures of dimension $n$} 
by expanding the $\CA_n$ rainbow atom structure defining the accessibilty (binary relations) corresponding to 
transpositions ($[i, j]$, $i<j<n$) as follows:
$[a]S_{[i, j]}[b]\iff\ a=b\circ [i,j]$.
Certain finite coloured graphs play an essential role in `rainbow games'. 
The board of a rainbow game are coloured graphs:
Such special coloured graphs are called {\it cones}:

{\it Let $i\in {\sf G}$, and let $M$ be a coloured graph consisting of $n$ nodes
$x_0,\ldots,  x_{n-2}, z$. We call $M$ {\it an $i$ - cone} if $M(x_0, z)=\g_0^i$
and for every $1\leq j\leq m-2$, $M(x_j, z)=\g_j$,
and no other edge of $M$
is coloured green.
$(x_0,\ldots, x_{n-2})$
is called  the {\bf base of the cone}, $z$ the {\bf apex of the cone}
and $i$ the {\bf tint of the cone}.}

The \ws\ of \pa\ in the rainbow game played on coloured graphs played between \pe\ and \pa\  
is bombarding \pe\ with $i$--cones, $i\in \sf G$, having the 
same base 
and distinct green tints.  To respect the rules of the game \pe\ has to choose a red label for appexes of two succesive cones.  
Eventually, running out of `suitable reds',  \pe\ is forced to play an inconsistent triple of reds where indices do not match.
Thus \pa\ wins on a red clique (a graph all of whose edges are lablled by a red) with 
the \ws\ for ether player  dictated by her(his) \ws\ in a simple private \ef\ forth  
game played on the relational structures $\sf G$ and $\sf R$ with $r\leq \omega$ rounds and $p\leq \omega$ pairs of pebbles (recalled and denoted above by ${\sf EF}_r^p(\sf G, \sf R)$). 
The $n$--dimensional rainbow complex $\QEA_n$ based on $\sf G$ and $\sf R$ will be denoted by $\A_{\G, \R}$. 
The dimension $n$ will always be clear from context.

{\bf A technical lemma:} We need the following lemma to be used in the sequel.  We defer the highly 
technical proof to the appendix.
We restrict the game $F^m$ to the signature of $\Sc$s.
\begin{lemma}\label{Thm:n}
Let $2<n<m$. If either:
\begin{itemize}
\item $\A\in \Sc_n$ and $\A\in S_c\Nr_n\Sc_m^{\sf ad}$ or, 
\item $\A\in \QA_n$ and $\A\subseteq_c \mathfrak{Nr}_n\C$, $\C\in \QA_m$ and $\s_0^1$ is completely additive in $\C$ or,
\item $\K$ is any class having signature between $\CA$ and $\QEA$, 
$\A\in \K_n$ and $\A\in \bold S_c\Nr_n\K_m$,
\end{itemize} 
then \pe\ has a \ws\ in $F^m(\At\A).$ 
\end{lemma}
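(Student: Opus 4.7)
The plan is for \pe\ to play using the $m$-dimensional dilation $\C$ supplied by the hypothesis $\A\subseteq_c\mathfrak{Nr}_n\C$, encoding each atomic network in play as a non-zero element of $\C$ and maintaining this non-zeroness as an invariant across all $\omega$ rounds. To an atomic network $N$ in play (with $\nodes(N)\subseteq m$) associate
$$\hat{N} \;=\; \prod_{\bar{x}\in{}^n\nodes(N)} \s_{\bar{x}}^{(\C)}\bigl(N(\bar{x})\bigr),$$
where $\s_{\bar{x}}$ is a substitution operator in $\C$ that places the $n$ formal coordinates onto the positions in $m$ listed by $\bar{x}$. Since $m>n$, the dilation has enough spare coordinates for this placement. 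Which primitives are available to build $\s_{\bar{x}}$ is what distinguishes the three cases: replacements $\s_i^j$ in the $\Sc^{\sf ad}$-case (i), replacements together with transpositions $\s_{[i,j]}$ in the $\QA$-case (ii), and the standard repertoire in case (iii). Since $F^m$ is restricted to the $\Sc$-signature, the consistency clauses \pe\ must enforce involve only cylindrifiers and replacements.

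Round $0$: \pe\ uses $\sum^{\A}\At\A=1$, which by $\A\subseteq_c\mathfrak{Nr}_n\C$ remains $1$ in $\C$, so the opening network from \pa's first cylindrifier move has a non-zero code. In a subsequent cylindrifier round $t$, \pa\ picks $N_t$, $i<n$, $\bar{x}\in{}^n\nodes(N_t)$, and an atom $a\in\At\A$ with $N_t(\bar{x})\le\cyl i a$, whence by monotonicity $\hat{N_t}\le\s_{\bar{x}}^{(\C)}\cyl i a$. \pe\ picks a node $z\in m$ (possibly reused, as $F^m$ permits) to serve as the new $i$-th coordinate, sets $\bar{y}\equiv_i\bar{x}$ with $y_i=z$, and extends $N_t$ to $M$ with $M(\bar{y})=a$. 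A cylindrifier computation inside $\C$ yields $\cyl z^{(\C)}\bigl(\hat{N_t}\cdot\s_{\bar{y}}^{(\C)}a\bigr)\neq 0$ for a suitable $z$; additivity over $\At\A$ combined with $\A\subseteq_c\mathfrak{Nr}_n\C$ then lets \pe\ pick, for each new $n$-tuple $\bar{w}$ of nodes created by the introduction of $z$, an atom $M(\bar{w})\in\At\A$ with $\hat{M}\neq 0$.

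The three cases differ only in how $\s_{\bar{x}}$ is coded and which substitutions must be completely additive. Case (iii) is the familiar Hirsch--Hodkinson argument with all operators present. Case (i) uses the $\sf ad$ decoration precisely to secure the needed additivity of replacements in the $\Sc$-reduct. The subtle case (ii) is the $\QA$-case: because the signature of $\QA_m$ contains only replacements and transpositions, an arbitrary substitution $\s_{\bar{x}}$ must be built as a composition of these primitives, and the hypothesis that $\s_0^1$ is completely additive in $\C$ is exactly what lets the composed substitution commute with the infinite joins of atoms of $\A$, ensuring that atoms push forward to non-zero elements of $\C$. The main obstacle lies in \pa's option to reuse the $m$ nodes: a clash between old and new labels at a recycled node $z$ forces one to argue that $\cyl z^{(\C)}\bigl(\hat{N_t}\cdot\s_{\bar{y}}^{(\C)}a\bigr)$ remains non-zero after multiplying in the constraints coming from the old labelling at $z$. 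This succeeds because $m$-dimensionality of $\C$ provides enough spare coordinates to perform a local renaming in $\C$ certifying non-zeroness; the bookkeeping for this renaming, together with the substitution-coding of case (ii), accounts for the length of the deferred appendix proof.
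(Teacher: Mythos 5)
Your proposal follows essentially the same route as the paper's appendix proof: you encode each network $N$ as the product $N^{+}=\prod_{\bar{x}}{\sf s}_{\bar{x}}N(\bar{x})$ in the dilation $\C$, maintain $N^{+}\neq 0$ as \pe's invariant, derive from complete additivity of the substitutions and $\sum^{\C}\At\A=1$ that every non-zero element meets some substituted atom, and handle \pa's node reuse by a renaming argument exploiting the spare coordinate in $m$. This matches the paper's three auxiliary facts and its strategy description, so the approach is the same.
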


\section{Blow up and blur constructions; splitting some of the atoms in a finite algebra getting 
a weakly representable atom strucure that is not strongly representable}

Throughout this section, unless otherwise indicated, $n$ is a finite ordinal $>2$. 
Here we show in theorem \ref{can} that for any $k\geq 3$ and any variety $\K$ having signature between $\Sc$ and $\QEA$
the varieties $\bold S\Nr_n\K_{n+k}$ and ${\sf RDf}_n$ are {\it not atom--canonical} from which it readily follows 
(by a result of Venema \cite[Theorem 2.96]{HHbook}) that they are not 
Sahlqvist axiomatizable. \\
 
{\bf General Idea of  blow up and blur construction:} The idea of a blow up and blur construction in (more than in) a nut shell is the following. Let $2<n<\omega$. 
Let $\bold L=\CA_n$ or $\RA$ for short.
Assume that $\bf RL\subseteq \K\subseteq \bold L$, and that $\bold S\K=\K$, that is $\K$ is closed under forming subalgebras.

\begin{itemize}

\item One starts with an atomic algebra $\C\in \bold L$ (usually finite) outside $\K$. 
Then one {\it blows up and blur} $\C$, by splitting
some of its atoms each to infinitely many, 
getting a new infinite atom structure $\bf At$. 
In this process a (finite) set of `blurs' are involved in a way to be clarified next.

\item These blurs {\it do not blur} the complex algebra $\Cm\bf At$, in the sense that $\C$ is `there on this global level',
$\C$ {\it embeds into} $\Cm\bf At$.
Thus the  algebra $\mathfrak{Cm}\At$ will not be in $\K$
because $\C\notin \K$, $\C\subseteq \mathfrak{Cm}\bf At$ and $\bold S\K=\K$.
Here the {\it completeness (existence of arbitray joins) of the complex algebra} will play a major role,
because every splitted atom of $\C$,  is mapped to {\it the join} of
its splitted copies which exist in $\mathfrak{Cm}\bf At$, because it is complete; the other atoms are mapped to themselves.

\item Such precarious joins prohibiting membership in $\K$ {\it do not }exist in the term algebra $\Tm\bf At$, the subalgebra
of $\Cm\bf At$ generated by the atoms, becuase it is not complete; only joins of finite or cofinite subsets of the atoms do,
so that now {\it `blurs' blur $\C$} on 
the level of the {\it term algebra}; more succintly, $\C$ {\it does not embed} in $\Tm\bf At.$

\item In fact, it can (and will be) be arranged that $\Tm \bf At$ will not only be in $\K$, but actually it will be in (the possibly smaller) 
class $\bf RL$. This is where the blurs play another crucial role. 
Basically non-principal ultrafilters, the blurs, together with the principal ultrafilters generated by the atoms
in $\bf At$ will be  used as colours to represent  $\Tm\bf At\A$.
In the process of representation, one cannot use {\it only} principal ultrafilters,
because $\Tm\bf At$ cannot be completely representable; for else
this induces a representation of $\mathfrak{Cm}\bf At\A$. But using the blurs one can actually {\it completely represent}
$\Tm\bf At^+$ the {\it canonical extension 
of $\Tm \bf At$.} 
\end{itemize}

Let us get more concrete giving some specific examples to this subtle 
construction that proves highly efficient in proving non--atom canonicity.

\subsection{Blowing up and blurring a finite Maddux algebra}

In what follows we construct cylindric agebras from atomic relation algebras that posses  {\it cylindric basis} using so called blow up and blur constructions. 
Unless otherwise indicated $n$ wil be a finite ordinal $>2$. 

Let $\R$ be an atomic  relation algebra.  An {$n$--dimensional basic matrix}, or simply a matrix  
on $\R$, is a map $f: {}^2n\to \At\R$ satsfying the 
following two consistency 
conditions $f(x, x)\leq \Id$ and $f(x, y)\leq f(x, z); f(z, y)$ for all $x, y, z<n$. For any $f, g$ basic matrices
and $x, y<m$ we write $f\equiv_{xy}g$ if for all $w, z\in m\setminus\set {x, y}$ we have $f(w, z)=g(w, z)$.
We may write $f\equiv_x g$ instead of $f\equiv_{xx}g$.  

\begin{definition}\label{b}
An {\it $n$--dimensional cylindric basis} for an atomic relaton algebra 
$\R$ is a set $\cal M$ of $n$--dimensional matrices on $\R$ with the following properties:
\begin{itemize}
\item If $a, b, c\in \At\R$ and $a\leq b;c$, then there is an $f\in {\cal M}$ with $f(0, 1)=a, f(0, 2)=b$ and $f(2, 1)=c$
\item For all $f,g\in {\cal M}$ and $x,y<n$, with $f\equiv_{xy}g$, there is $h\in {\cal M}$ such that
$f\equiv_xh\equiv_yg$. 
\end{itemize}
\end{definition}
One can construct a $\CA_n$ in a natural way from an $n$--dimensional cylindric basis which can be viewed as an atom structure of a $\CA_n$
(like in \cite[Definition 12.17 ]{HHbook} addressing hyperbasis).
For an atomic  relation algebra $\R$ and $l>3$, we denote by ${\sf Mat}_n(\At\R)$ the set of all $n$--dimensional basic matrices on $\R$.
${\sf Mat}_n(\At\R)$ is not always an $n$--dimensional cylindric basis, but sometimes it is,
as will be the case described next. On the other 
hand, ${\sf Mat}_3(\At\R)$ is always a  $3$--dimensional cylindric basis; a result of Maddux's, so that 
$\Cm{\sf Mat}_3(\At\R)\in \CA_3$.

Non--atom canonicity of $\bold S{\sf Nr}_n\CA_m$ for $m>n$  follows from 
the existence of 
finite relation algebras having a so--called $n$--blur and no infinite $m$--dimensional hyperbasis (to be defined next).
In the limiting case, upon identifying an infinite $\omega$--hyperbasis with an ordinary representation, such  (finite) relation 
algebras exists; furthermore for each $n\leq l<\omega$, such an $\R_l$ exists 
having a {\it strong} $l$--blur which is stronger than having merely an $l$--blur as the name suggests. 
Such algbras were denoted by $\mathfrak{E}_k(2, 3)$ in \cite[Lemma 5]{ANT} where
$k$, the finite number of non--identity atoms, depends recursively on $l$. 
Constructed by Maddux,  a triple $(a, b, c)$ of non--identity atoms of $\mathfrak{E}(2, 3)$ 
is consistent $\iff$ $|\{a, b, c\}|\neq 1$. That is, only monochromatic triangles are forbidden. 
It is known, that such relation algebra (for $k<\omega$) 
if representable  will have to be represented on a finite base.
((Strong) $l$--blurness is defined in next \ref{strongblur}.)
   
For the next lemma, we refer the reader to \cite[Definition 12.11]{HHbook} 
for the definition of hyperbasis for relation algebras. We use:
\begin{lemma}\label{i} Let $\R$ be  finite relation algebra and $3<n<\omega$. 
Then $\R$ has an $n$--dimensional infinite hyperbasis $\iff\ \R$ has an infinite $n$--flat representation.
\end{lemma}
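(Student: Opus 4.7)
The plan is to prove both directions via the standard translation between $n$-dimensional hyperbases and $n$-flat representations developed in Hirsch--Hodkinson \cite[Ch.~13]{HHbook}. The key observation is that hypernetworks in a hyperbasis are precisely the records of the label data visible on the $n$-cliques of an $n$-flat representation, so the two notions are essentially two descriptions of the same combinatorial object.

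For the direction $(\Leftarrow)$, suppose $\R$ has an infinite $n$-flat representation $M$. Flatness of $M$ equips each $\leq n$-sized clique with atom labels on pairs and hyperlabels on longer tuples, compatibly with the clique-guarded interpretation of the $\RA$-operations. For each $n$-clique $\bar{a}\in {}^n M$, read off a hypernetwork $N_{\bar{a}}\colon {}^{\leq n}n\to \At\R\cup \Lambda$ from these labels, and set $H=\{N_{\bar{a}}: \bar{a} \text{ is an } n\text{-clique in } M\}$. The pair- and triangle-consistency clauses of a hyperbasis hold because $M$ is an $\R$-model; the amalgamation clauses (for $f\equiv_{xy}g$, produce an $h$ with $f\equiv_x h\equiv_y g$) follow from flatness, which asserts precisely that two $n$-cliques sharing $n-2$ vertices can be amalgamated over a common vertex of $M$. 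Infiniteness of $H$ is inherited from that of $|M|$.

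For the harder direction $(\Rightarrow)$, use the hyperbasis $H$ to run a step-by-step construction. Enumerate all potential defects of two kinds: cylindrifier requests (given a current hypernetwork $N$, an index $i<n$, a tuple $\bar x$, and an atom $a$ with $N(\bar x)\leq {\sf c}_i a$, deliver a witness) and amalgamation requests (given $f\equiv_{xy}g$ already present in the current stage, supply a matching $h$). Build an $\omega$-chain $\mathcal{N}_0\subseteq \mathcal{N}_1\subseteq\cdots$ of finite partial hypernetworks by processing the defects one at a time, each time using the corresponding hyperbasis axiom to fish out a matrix from $H$ and glue it into the current stage along fresh nodes. The union $\mathcal{N}$ has a countably infinite domain $D$; define the representation $M$ on $D$ by declaring $M\models a(x,y)$ iff $\mathcal{N}(x,y)=a$. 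By construction every $n$-clique of $M$ arises as (the image of) some member of $H$, which guarantees that the clique-guarded semantics on $M$ matches the $\R$-operations, so $M$ is an $n$-flat representation; infiniteness is automatic.

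The main obstacle is ensuring fairness of the defect enumeration: new nodes created to satisfy one request can spawn fresh defects of either kind, so the two streams must be dovetailed so that every eventually pending request is served, while simultaneously preserving the invariant that every finite submatrix embedded in the current stage belongs to $H$. This is the standard Hirsch--Hodkinson bookkeeping argument; once in place, both implications follow.
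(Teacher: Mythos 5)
The paper does not actually prove this lemma: its entire ``proof'' is the citation \cite[Theorem 13.46, the equivalence $(7)\iff(11)$]{HHbook}, applied to a finite $\R$ (for which $\R^+=\R$, so the two printings of the lemma agree). Your proposal, which reconstructs the translation between hyperbases and flat representations from scratch, therefore takes a genuinely different route from the paper, although in substance it is a sketch of the very argument Hirsch and Hodkinson give for the cited theorem. What the paper's approach buys is safety: the delicate bookkeeping (the construction of hyperlabels, the coherence conditions, the fairness of the step-by-step) is already carried out in \cite{HHbook}. What yours would buy, if completed, is a self-contained proof.

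As written, though, the sketch has genuine gaps. First, in the direction from an $n$-flat representation $M$ to a hyperbasis, you assert that flatness ``equips each clique with \ldots hyperlabels on longer tuples.'' It does not: a relativized representation only interprets the elements of $\R$ as binary relations on the base, so the hyperlabels must be manufactured --- the standard device is to take, as the hyperlabel of a tuple $\bar{x}$ of indices inside a clique $\bar{a}$, the equivalence class of the corresponding tuple of points of $M$ under local isomorphism --- and one must then verify the hypernetwork and amalgamation conditions for that specific choice; this is real work, not a reading-off. Second, your claims that infiniteness is ``inherited'' in one direction and ``automatic'' in the other skate over the only reason the word \emph{infinite} appears in the statement at all: the lemma is used later precisely to rule out \emph{infinite} $m$-flat representations of algebras that may well admit finite ones. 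One must check that an infinite base produces infinitely many hyperlabels (or hypernetworks), and conversely that an infinite hyperbasis forces the step-by-step construction to keep introducing fresh points; neither is a tautology, and a construction that merely dovetails defects could in principle close off after finitely many steps. Finally, for the limit structure to be a representation of $\R$ (an injective homomorphism into a relativized set algebra) you must also arrange that every atom is realized, which requires seeding the construction with suitable initial hypernetworks rather than starting from an arbitrary one.
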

\begin{proof} \cite[Theorem 13.46, the equivalence $(7)\iff (11)$]{HHbook}.
\end{proof}

The next definition to be used in the sequel is taken from \cite{ANT}:

To violate various forms of omitting types theorems we build $\CA_n$s from finite 
relation algebras.
For a relation algebra $\R$, we let $\R^+$ denotes its canonical extension.

\begin{lemma}\label{i} Let $\R$ be  a relation algebra and $3<n<\omega$.  Then the following hold:
\begin{enumerate}
\item $\R^+$ has an $n$--dimensional infinite basis $\iff\ \R$ has an infinite $n$--square representation.

\item $\R^+$ has an $n$--dimensional infinite hyperbasis $\iff\ \R$ has an infinite $n$--flat representation.
\end{enumerate}
\end{lemma}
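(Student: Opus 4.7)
The plan is to bootstrap off Theorem 13.46 of \cite{HHbook} applied to the canonical extension $\R^+$. Since $\R^+$ is always a complete and atomic relation algebra into which $\R$ embeds densely, the ``atomic'' hypothesis in Hirsch--Hodkinson's theorem is available for $\R^+$ for free. Applying the relevant equivalences in Theorem 13.46 of \cite{HHbook} (the basis/$n$-square equivalence for part (1), and the hyperbasis/$n$-flat equivalence -- already used in the preceding Lemma \ref{i} -- for part (2)) directly to $\R^+$ yields: $\R^+$ has an infinite $n$-dimensional basis (resp. hyperbasis) iff $\R^+$ has an infinite $n$-square (resp. $n$-flat) representation. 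The new ingredient to supply is the transfer principle that $\R^+$ has an infinite $n$-square (resp. $n$-flat) representation iff $\R$ does.

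For the ``$\Leftarrow$'' direction of this transfer, given an infinite $n$-square representation $h : \R \to \wp(V)$ on a base $U$, I extend $h$ to $\R^+$ by the standard canonical-extension recipe: for each $\mu \in \Uf(\R)$ set $h^+(\mu) = \bigcap_{a \in \mu} h(a)$, and for arbitrary $r \in \R^+$, viewed as a set of ultrafilters of $\R$, put $h^+(r) = \bigcup_{\mu \in r} h^+(\mu)$. Monk-style canonicity arguments (as in the proof of canonicity of $\sf RRA$) show that $h^+$ is a relation algebra homomorphism of $\R^+$ into $\wp(V)$ which agrees with $h$ on $\R$. For the ``$\Rightarrow$'' direction, any representation of $\R^+$ restricts along $\R \hookrightarrow \R^+$ to a representation of $\R$, and $n$-squareness depends only on the base $U$ and on membership of tuples in the sets representing algebra elements, so it is inherited by the restriction.

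The main obstacle is verifying that the $n$-squareness of $h$ actually transfers to the extended map $h^+$. Concretely, one must show that for every partial $n$-clique $\bar{x}$ over $U$ and every element $r = s\,;t$ of $\R^+$ with $(x_0,x_{n-1})\in h^+(r)$, there is a witness node $y \in U$ satisfying $(x_0,y)\in h^+(s)$ and $(y,x_{n-1})\in h^+(t)$. Since joins and compositions in $\R^+$ are computed as (directed) limits of joins and compositions in $\R$, and since $h^+$ is completely additive, any such composition in $\R^+$ unfolds into a join of compositions coming from $\R$; the $n$-squareness of $h$ supplies local witnesses for these, and one glues them together to obtain a witness for $h^+$. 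Part (2) is handled \emph{mutatis mutandis}, with the hyperbasis/$n$-flat equivalence of Theorem 13.46 replacing the basis/$n$-square one, and with the witnessing condition strengthened to reflect that $n$-flatness is $n$-squareness together with commutativity of ``cylindrifiers'' at partial hyperedges.
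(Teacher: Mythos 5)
The paper proves this lemma by pure citation: \cite[Theorem 13.46]{HHbook} is already stated for an arbitrary (not necessarily atomic) relation algebra $\R$ and already has the canonical extension built into the relevant items --- item (1) there reads ``$\R^+$ has an $n$--dimensional relational basis'' while item (5) reads ``$\R$ has an $n$--square relativized representation'', and similarly (7) versus (11) for hyperbases and $n$--flatness. So the equivalences you set out to establish are verbatim the content of the cited result, and the entire transfer principle between $\R$ and $\R^+$ that you present as ``the new ingredient to supply'' is not needed.

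Beyond being unnecessary, your transfer argument has a genuine gap in the direction that carries all the weight. Defining $h^+(\mu)=\bigcap_{a\in\mu}h(a)$ for an ultrafilter $\mu$ and extending by unions does respect the Boolean structure, but the claim that the result is a relation algebra homomorphism (let alone an $n$--square representation) does not follow from complete additivity or from unfolding compositions in $\R^+$ as limits of compositions in $\R$. The difficulty is exactly the one you compress into ``one glues them together'': for atoms $\mu,\nu$ of $\R^+$ and a pair $(x,z)\in h^+(\lambda)$ with $\lambda\leq \mu;\nu$, squareness of $h$ gives, for each \emph{finite} set of pairs $(a,b)\in\mu\times\nu$, a witness $y$ in the clique with $(x,y)\in h(a)$ and $(y,z)\in h(b)$; but different finite sets may force different witnesses, and nothing in the given representation guarantees a single $y$ working for all of $\mu\times\nu$ simultaneously. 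This is precisely why the classical proof that ${\sf RRA}$ is closed under canonical extensions first passes to an $\omega$--saturated elementary extension of the representing structure (or argues via ultraproducts); the naive pointwise recipe fails without that step. If you wanted to avoid the black box you would have to reproduce that saturation argument in the relativized ($n$--square, $n$--flat) setting, which is essentially what Hirsch and Hodkinson do in proving Theorem 13.46; the intended proof here is simply to quote the stated equivalences $(1)\iff(5)$ and $(7)\iff(11)$.
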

\begin{proof} \cite[Theorem 13.46, the equivalence $(1)\iff (5)$ for basis, and the equivalence $(7)\iff (11)$ for hyperbasis]{HHbook}.
\end{proof} 
The following definition to be used in the sequel is taken from \cite{ANT}:
\begin{definition}\label{strongblur}\cite[Definition 3.1]{ANT}
Let $\R$ be a relation algebra, with non--identity atoms $I$ and $2<n<\omega$. Assume that  
$J\subseteq \wp(I)$ and $E\subseteq {}^3\omega$.
\begin{enumerate}
\item We say that $(J, E)$  is an {\it $n$--blur} for $\R$, if $J$ is a {\it complex $n$--blur} defined as follows:   
\begin{enumarab}
\item Each element of $J$ is non--empty,
\item $\bigcup J=I,$
\item $(\forall P\in I)(\forall W\in J)(I\subseteq P;W),$
\item $(\forall V_1,\ldots V_n, W_2,\ldots W_n\in J)(\exists T\in J)(\forall 2\leq i\leq n)
{\sf safe}(V_i,W_i,T)$, that is there is for $v\in V_i$, $w\in W_i$ and $t\in T$,
we have
$v;w\leq t,$ 
\item $(\forall P_2,\ldots P_n, Q_2,\ldots Q_n\in I)(\forall W\in J)W\cap P_2;Q_n\cap \ldots P_n;Q_n\neq \emptyset$.
\end{enumarab}
and the tenary relation $E$ is an {\it index blur} defined  as 
in item (ii) of \cite[Definition 3.1]{ANT}.

\item We say that $(J, E)$ is a {\it strong $n$--blur}, if it $(J, E)$ is an $n$--blur,  such that the complex 
$n$--blur  satisfies:
$$(\forall V_1,\ldots V_n, W_2,\ldots W_n\in J)(\forall T\in J)(\forall 2\leq i\leq n)
{\sf safe}(V_i,W_i,T).$$ 
\end{enumerate}
\end{definition}
The following theorem concisely summarizes 
the construction in \cite{ANT} and says some more easy facts.

\begin{theorem}\label{ANT} Let $2<n\leq l<\omega$.
Let $\R$ be a finite relation algebra with an $l$--blur $(J, E)$ where $J$ is the $l$--complex blur and $E$ is the index blur, as in definition \ref{strongblur}.  
\begin{enumerate}
\item Then for ${\cal R}={\sf Bl}(\R, J, E)$, with atom structure $\bf At$ obtained by blowing up and blurring $\R$ 
(with underlying set is denoted by $At$ on \cite[p.73]{ANT}), the set of $l$ by $l$--dimensional matrices 
${\bf At}_{ca}={\sf Mat}_l(\bf At)$ is an $l$--dimensional cylindric basis, that is a weakly representable atom structure \cite[Theorem 3.2]{ANT}. 
The algebra ${\sf Bl}_l(\R, J, E)$, with last notation as in \cite[Top of p. 78]{ANT} having atom structure ${\bf At}_{ra}$ is in $\RCA_l$. Furthermore, 
$\R$ embeds into $\Cm{\bf At}$ which embeds into $\Ra\Cm({\bf At}_{ca}).$ 

\item For very $n<l$, 
there is an $\R$ having a strong $l$--blur 
but no finite representations. Hence $\bf At$ obtained by blowing up and blurring $\R$ and 
the $\CA_n$ atom structure ${\bf At}_{ca}$ as in the previous item are 
not strongly representable.

\item Let $m<\omega$. If $\R$ is  as in the hypothesis, 
$(J, E)$ is a strong $l$--blur, and $\R$ has no $m$--dimensional hyperbasis, 
then $l<m$.

\item If $n=l<m<\omega$ and $\R$ as above has no infinite $m$--dimensional hyperbasis, then $\Cm\At{\sf Bb}_l(\R, J, E)\notin 
\bold S{\sf Nr}_n\CA_m$, and the latter class is not atom--canonical.

\item If $2<n\leq l<m\leq \omega$, and $(J, E)$ is a strong $m$--blur,  definition \ref{strongblur},
then $(J, E)$ is a strong $l$--blur,  ${\sf Bl}_l(\R, J, E)\cong \mathfrak{Nr}_l{\sf Bl}_m(\R, J, E)$ and 
${\cal R}\cong \Ra {\sf Bl}_l(\R, J, E)\cong \Ra{\sf Bl}_m(\R, J, E).$ 
\end{enumerate}
\end{theorem}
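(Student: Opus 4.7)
The plan is to dispatch the five items in sequence, relying on the construction of \cite{ANT} for (1)--(2) and on matrix-basis bookkeeping for (3)--(5). For (1), I follow Andr\'eka-N\'emeti-Thompson: blow up $\R$ by replacing each non-identity atom $a\in I$ with countably many copies $\{a^{(i)}:i<\omega\}$, and declare a triple of copies consistent precisely when either the underlying $\R$-triple is consistent in $\R$ and the indices satisfy $E$, or the underlying triple is ``safe'' via some $W\in J$. This yields the relation algebra $\mathcal{R}={\sf Bl}(\R,J,E)$ with atom structure $\bf At$; the verification that $\Mat_l(\bf At)$ satisfies the two cylindric-basis axioms of Definition~\ref{b} uses the amalgamation clause~(4) of the complex $l$-blur. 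The embedding $\R\hookrightarrow\Cm\bf At$ sends each atom $a$ to the join $\sum_i a^{(i)}$, available because $\Cm\bf At$ is complete; that this is a relation-algebra homomorphism is immediate from the consistency rule. The further embedding $\Cm\bf At\hookrightarrow\Ra\Cm{\bf At}_{ca}$ assigns to each $\R$-atom the set of $l$-matrices whose $(0,1)$-entry is that atom.

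For (2), I invoke the Maddux algebras $\mathfrak{E}_k(2,3)$, choosing $k$ via the recursive function of $l$ from \cite[Lemma~5]{ANT} so that the resulting $\R$ carries a strong $l$-blur. Since only monochromatic triples are forbidden in $\mathfrak{E}_k(2,3)$, any representation must be on a finite base, and a Ramsey-theoretic argument comparing $k$ with the number of available colours rules out finite representations for sufficiently large $k$; were $\bf At$ strongly representable, the embedding $\R\hookrightarrow\Cm\bf At$ from (1) would pull back such a representation to a finite representation of $\R$. For (3), the universal safety clause characterising a \emph{strong} $l$-blur turns $\Mat_l(\At\R)$ itself into an $l$-dimensional hyperbasis for $\R$; finiteness of $\R$ makes this hyperbasis finite, so the assumed absence of any $m$-dimensional hyperbasis forces $l<m$.

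For (4), assume toward contradiction that $\Cm\At{\sf Bb}_l(\R,J,E)\in\bold S\Nr_n\CA_m$, with witness $\D\in\CA_m$. Using $n=l$ together with~(1), one has $\R\hookrightarrow\Ra\Cm\At{\sf Bb}_l(\R,J,E)\hookrightarrow\Ra\D$, and Lemma~\ref{i}(2) then delivers an infinite $m$-dimensional hyperbasis for $\R^+=\R$, contradicting the hypothesis. Non-atom-canonicity then follows because ${\sf Bl}_l(\R,J,E)\in\RCA_l\subseteq\bold S\Nr_n\CA_m$ by~(1), while its Dedekind-MacNeille completion $\Cm\At{\sf Bb}_l(\R,J,E)$ is not. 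Finally, for (5), monotonicity of the strong blur in the dimension is immediate from the universal form of the safety clause in Definition~\ref{strongblur}; the isomorphism ${\sf Bl}_l(\R,J,E)\cong\Nr_l{\sf Bl}_m(\R,J,E)$ is realised by restricting an $m$-matrix to its top-left $l\times l$ corner, with amalgamation from the strong $m$-blur providing surjectivity onto the neat reduct, and the $\Ra$-reduct identifications follow by restricting further to a $2\times 2$ corner.

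The most delicate step will be (4): one must thread the parameter identification $n=l$ precisely through the successive $\Ra$-reducts and align with Lemma~\ref{i}(2), so that the hypothesis of no infinite $m$-flat representation of $\R$ cleanly rules out an $m$-dilation of $\Cm\At{\sf Bb}_l(\R,J,E)$. The remaining items reduce to dimension-indexed bookkeeping layered on top of the ANT construction.
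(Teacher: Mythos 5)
Your treatment of items (1), (4) and (5) follows essentially the same route as the paper: the same blow-up with $E$ synchronising composition and $J$ supplying the blurs, the same embedding of $\R$ into $\Cm{\bf At}$ via atoms going to the joins of their copies, the same reduction of (4) to Lemma~\ref{i}(2) through $\R\hookrightarrow\Ra\Cm\At{\sf Bb}_l(\R,J,E)\hookrightarrow\Ra\D$, and the same restriction map $x\mapsto\{M\upharpoonright l\times l: M\in x\}$ with surjectivity onto the neat reduct coming from the strong-blur clause $(J5)_l$. Two of the remaining items, however, contain genuine gaps.

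In item (2) the finite/infinite roles are inverted. The relevant property of the Maddux algebra $\mathfrak{E}_k(2,3)$ is that, because only monochromatic triangles are forbidden, Ramsey's theorem forces a monochromatic triangle in any representation on a sufficiently large (in particular any infinite) base; so if $\mathfrak{E}_k(2,3)$ is representable at all it is representable only on a \emph{finite} base. Your claim that Ramsey ``rules out finite representations for sufficiently large $k$'' is backwards (more colours push the Ramsey threshold up, making finite representations easier, not harder), and read literally together with your first clause it would make $\R$ non-representable outright. Worse, the concluding step is wrong in the same direction: a representation of $\Cm{\bf At}$ restricts to a representation of $\R$ on the \emph{same} base, and since $\Cm{\bf At}$ contains infinitely many pairwise disjoint non-zero elements that base must be infinite. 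The contradiction is therefore with the non-existence of representations of $\R$ on an infinite base, not with the non-existence of finite ones; as written your argument does not close.

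In item (3) you replace the paper's argument by the unjustified assertion that the universal safety clause makes $\Mat_l(\At\R)$ an $l$-dimensional \emph{hyperbasis} for $\R$ itself. This conflates two different things: the blur conditions are engineered so that $\Mat_l({\bf At})$ — matrices over the blown-up atom structure — is a cylindric basis, and even granting that $(J5)$ yields some basis-like property for $\Mat_l(\At\R)$, a cylindric basis is not a hyperbasis (Lemma~\ref{i} pairs bases with square representations and hyperbases with flat ones, and the hypothesis of item (3) is about hyperbases). The paper instead argues by contradiction from $m\leq l$: the strong $l$-blur gives ${\sf Bb}_n(\R,J,E)\cong\mathfrak{Nr}_n{\sf Bb}_l(\R,J,E)$ with ${\sf Bb}_l(\R,J,E)$ an atomic $l$-dilation, whence $\R\subseteq_c\Ra{\sf Bb}_l(\R,J,E)$ and $\R$ acquires a complete $l$-flat, hence $m$-flat, representation and therefore an $m$-dimensional hyperbasis — contradicting the hypothesis. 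You need some version of this detour (or an honest proof that the strong blur yields a genuine $l$-dimensional hyperbasis for $\R$, hyperedges and all) for item (3) to stand.
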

\begin{proof} Cf.  \cite[ For notation, cf. p.73, p.80, and for proofs cf. Lemmata 3.2, 4.2, 4.3]{ANT}. 
We start by an outline of (1).  Let $\R$ be as in the hypothesis. The idea is to blow up and blur $\R$ in place of the Maddux algebra 
$\mathfrak{E}_k(2, 3)$ dealt with in \cite[Lemma 5.1]{ANT}   (where $k<\omega$ is the number of non--identity atoms 
and it depends on $l$). 

Let $3<n\leq l$. We blow up and blurr $\R$ as in the hypothesis. $\R$ is blown up by splitting all of the atoms each to infinitely many.
$\R$ is blurred by using a finite set of blurs (or colours) $J$. This can be expressed by the product ${\bf At}=\omega\times \At \R\times J$,
which will define an infinite atom structure of a new
relation algebra. (One can view such a product as a ternary matrix with $\omega$ rows, and for each fixed $n\in \omega$,  we have the rectangle
$\At \R\times J$.)
Then two partitions are defined on $\bf At$, call them $P_1$ and $P_2$.
Composition is re-defined on this new infinite atom structure; it is induced by the composition in $\R$, and a ternary relation $E$
on $\omega$, that `synchronizes' which three rectangles sitting on the $i,j,k$ $E$--related rows compose like the original algebra $\R$.
This relation is definable in the first order structure $(\omega, <)$.
The first partition $P_1$ is used to show that $\R$ embeds in the complex algebra of this new atom structure, namely $\Cm \bf At$, 
The second partition $P_2$ divides $\bf At$ into {\it finitely many (infinite) rectangles}, each with base $W\in J$,
and the term algebra denoted in \cite{ANT} by ${\sf Bb}(\R, J, E)$ over $\bf At$, consists of the sets that intersect co--finitely with every member of this partition.

On the level of the term algebra $\R$ is blurred, so that the embedding of the small algebra into
the complex algebra via taking infinite joins, do not exist in the term algebra for only finite and co--finite joins exist
in the term algebra. 
The algebra ${\sf Bb}(\R, J, E)$ is representable using the finite number of blurs. These correspond to non--principal ultrafilters
in the Boolean reduct, which are necessary to
represent this term algebra, for the principal ultrafilter alone would give a complete representation,
hence a representation of the complex algebra and this is impossible.
Thereby, in particular, as stated in theorem \ref{ANT} an atom structure that is weakly representable but not strongly representable is obtained.

Because $(J, E)$ is a complex set of $l$--blurs, this atom structure has an $l$--dimensional cylindric basis, 
namely, ${\bf At}_{ca}={\sf Mat}_l(\bf At)$. The resulting $l$--dimensional cylindric term algebra $\Tm{\sf Mat}_l(\bf At)$, 
and an algebra $\C$ having tatom structure ${\bf At}_{ca}$ denoted in \cite{ANT} by 
${\sf Bb}_l(\R, J, E)$, such that $\Tm{\sf Mat}_l({\bf At})\subseteq \C\ \subseteq \Cm{\sf Mat}_l(\bf At)$ 
is shown to be  representable.

For (2):  The Maddux relation algebra $\mathfrak{E}_k(2,3)$ 
Like in \cite[Lemma 5.1]{ANT},  one take $l\geq 2n-1$, $k\geq (2n-1)l$, $k\in \omega$, and then take the finite
integral relation algebra ${\mathfrak E}_k(2, 3)$ 
where $k$ is the number of non-identity atoms in
${\mathfrak E}_k(2,3)$. 
with $k$ depending on $l$ as in \cite[Lemma 5.1]{ANT} is the required $\R$ in (2).

We prove (3). Let $(J, E)$ be the strong $l$--blur of $\R$. Assume  for contradiction that $m\leq l$. Then we get by \cite[item (3), p.80]{ANT},  
that  $\A={\sf Bb}_n(\R, J, E)\cong \mathfrak{Nr}_n{\sf Bb}_l(\R, J, E)$.  But the cylindric $l$--dimensional algebra ${\sf Bb}_l(\R, J, E)$ is atomic,  having atom structure  
${\sf Mat}_l \At({\sf Bb}(\R, J, E))$, so $\A$ has an atomic $l$--dilation.
Hence $\A=\Nr_n\D$ where $\D\in \CA_l$ is atomic.
But $\R\subseteq_c \Ra\Nr_n\D\subseteq_c \Ra\D$. 
Hence $\R$ has a complete $l$--flat representation, 
hence a complete $m$--flat representation, because $m<l$ and $l\in \omega$. 
This is a contradiction.

We prove (4). Assume that $\R$ is as in the hypothesis.
Take $\B={\sf Bb}_n(\R, J, E)$. Then by the above $\B\in \RCA_n$. We claim that $\C=\Cm\At\B\notin \bold S{\sf Nr}_n\CA_{m}$.
To see why, suppose for contradiction that $\C\subseteq \mathfrak{Nr}_n\D$,
where $\D$ is atomic, with $\D\in \CA_{m}$.  Then $\C$ has a (necessarily infinite $m$--flat representation), hence 
$\Ra\C$ has an infinite $m$--flat representation as an $\RA$.  But $\R$ embeds into $\Cm\At({\sf Bb}(\R, J, E)$) which, in turn, 
embeds  into $\Ra\C$, so $\R$ has an infinite $m$--flat representation. By lemma \ref{i}, 
$\R$ has a $m$--dimensional infinite hyperbases which
contradicts the hypothesis.    

Now we prove (the last) item (5). For $2<n\leq l<m<\omega.$
If the $m$--blur happens to be {\it strong}, in the sense of definition \ref{strongblur} and $n\leq l<m$
then we get by \cite[item (3) pp. 80]{ANT},  that ${\sf Bb}_l(\R, J, E)\cong \mathfrak{Nr}_l{\sf Bb}_m(\R, J, E)$.
This is proved by defining  an embedding 
$h:\Rd_l{\sf Bb}_m(\R, J, E)\to {\sf Bb}_l(\R, J, E)$ 
via  $x\mapsto \{M\upharpoonright l\times l: M\in x\}$ and showing that  
$h\upharpoonright  \mathfrak{Nr}_l{\sf Bb}_m(\R, J, E)$ 
is an isomorphism onto ${\sf Bb}_l(\R, J, E)$ \cite[p.80]{ANT}. 
Surjectiveness uses the condition $(J5)_l$.  The resulting $l$--dimensional cylindric term algebra $\Tm{\sf Mat}_l(\bf At)$, 
and an algebra $\C$ having tatom structure ${\bf At}_{ca}$ denoted in \cite{ANT} by 
$\Bb_l(\R, J, E)$, such that $\Tm{\sf Mat}_l({\bf At})\subseteq \C\ \subseteq \Cm{\sf Mat}_l(\bf At)$ 
is shown to be  representable. The complex algebra $\Cm{\sf Mat}_l(\bf At)=\Cm \At\C$
is outside in $\bold S{\sf Nr}_n\CA_{m}$, because $\R$ 
embeds into $\Cm\bf At$ which embeds into $\Ra\Cm{\sf Mat}_l(\bf At)$, so if 
$\Cm{\sf Mat}_l({\bf At})\in \bold S{\sf Nr}_n\CA_{m}$, 
then $\R\in {\sf Ra}\bold S{\sf Nr}_n\CA_m\subseteq \bold S{\sf Ra}\CA_m$
which is contrary to assumption.
\end{proof}

We prove a non -finite axiomatizability result using the construction in \cite{ANT} recalled in the second item of heorem \ref{ANT}.
 Let ${\sf LCA}_n$ denote the elementary class of ${\sf RCA}_n$s satisfying the Lyndon conditions. 
We stipulate that $\A\in {\sf LCA}_n\iff$ $\A$ is atomic and $\At\A$ satifies the Lyndon conditions \cite[Definition 3.5.1]{HHbook2}. 
In the same sense, let $\sf LCRA(\subseteq \sf RRA)$ denote  the elementary class of $\sf RA$s satsfying the Lyndon conditions \cite[Definition pp. 337]{HHbook}.
We now use the above construction
proving non--atom canonicity to prove non--finite 
axiomatizability.
We use `bad' non--representable 
Monk--like algebras converging to a `good' representable one. 
In the process, we recover the results of Monk and Maddux on non--finite axiomatizability of both 
${\sf RCA}_n$  $(2<n<\omega)$  
and $\sf RRA$.

Let $2<n<\omega$. Then $\sf LCRA$ and ${\sf LCA}_n$ are 
not finitely axiomatizable.
For each $2<n\leq l<\omega$, 
let $\R_l$ be the finite Maddux algebra $\mathfrak{E}_{f(l)}(2, 3)$ with strong $l$--blur
$(J_l,E_l)$ and $f(l)\geq l$ denoted by $l<k<\omega$ in \cite[Lemma 5]{ANT}.  
Let ${\cal R}_l={\sf Bb}(\R_l, J_l, E_l)\in {\sf RRA}$ and let $\A_l=\mathfrak{Nr}_n{\sf Bb}_l(\R_l, J_l, E_l)\in \RCA_n$. 
Then  $(\At{\cal R}_l: l\in \omega\sim n)$, and $(\At\A_l: l\in \omega\sim n)$ are sequences of weakly representable atom structures 
that are not strongly representable with a completely representable 
ultraproduct. The (complex algebra) sequences $(\Cm \At{\cal R}_l: l\in \omega\sim n)$, 
$(\Cm\At\A_l: l\in \omega\sim n$) are typical examples of what Hirsch and Hodkinson call `bad Monk (non--representable) algebras' 
converging to  `good (representable) one, namely their (non-trivial) ultraproduct. 

Also, for $2<n\leq k <m<\omega$, $\A_k=\mathfrak{Nr}_k\A_m$. Such sequences witness the non--finite axiomatizability of the class 
representable agebras and the elementary closure of the class completely representable ones, namely, 
the class of algebras satisfying the Lyndon conditions. This recovers Monk's and Maddux's classical results  
on non--finite axiomatizability of $\sf RRA$s and $\RCA_n$s
since algebras considered are generated by a 
single $2$--dimensional elements. Hirsch and 
Hodkinson apply the words `good and bad' to graphs on which Monk--like algebras are based. A graph is bad if it has a finite colourng, else it is good.
What plays the role of  colourings 
here is the number of blurs used.

\subsection{Blowing up and blurring 
finite rainbow algebras}

We briefly review the blow up and blur construction in \cite[17.32, 17.34, 17.36]{HHbook} for relation algebras.
This time we blow up and blur a finite {\it rainbow} relation algebra (as opposed to a Maddux one).
We pave the way for another construction involving blowing up and blurring a finite rainbow $\CA_n$ ($2<n<\omega$). 
We follow the notation in \cite[ lemmas 17.32, 17.34, 17.35, 17.36]{HHbook2} with the sole exception that we denote by 
$m$ (instead of $\bold K_m$) 
the complete irreflexive graph on $m$ defined the obvious way; that is we identify this graph with its set of vertices. We denote the rainbow relation 
algebra based on $\sf G$ and $\sf R$, by $\bold R_{\sf G, \R}$
The definitions of the term algebra $\T$, and the ultrafilters $\delta$ and $\rho$ mentioned in the statement of the theorem can 
be found in \cite[p. 532]{HHbook}
and  the ultrafilters will be recalled in the proof.
\begin{theorem}\label{b2} Let $\R=\bold R_{m, n}$, $2<n<m<\omega$.
Let $\T$ be the term algebra obtained by splitting the reds as in {\it op.cit}. Then $\T$ has exactly two non--principal 
ultrafilters $\delta$ and $\rho$ where  $\rho$ is a (flexible) non--principal ultrafilter consisting of reds with distinct indices and $\delta$ is the reds with common indices.
Furthermore, $\T$ is representable, but $\mathfrak{Cm}\At \T\notin  \bold S\Ra\CA_{6}$, least representable.
\end{theorem}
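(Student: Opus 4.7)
The plan is to implement the blow-up-and-blur template from the introduction, this time with a finite rainbow relation algebra as the base. First I would make the splitting explicit: in $\R = \bold R_{m,n}$, replace each red atom $r_{ij}$ ($i,j<n$) by countably many copies $\{r_{ij}^\ell : \ell<\omega\}$, keeping all other atoms (greens, whites, yellows) as single atoms. Extend the composition table to the resulting atom structure $\bf At$ by inheriting the rainbow rules on colour labels and imposing the superscript consistency condition used in \cite[17.32]{HHbook}. Set $\T = \Tm\bf At$, the subalgebra of $\Cm\bf At$ generated by the atoms.

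To identify $\delta$ and $\rho$, observe that any non-principal ultrafilter of $\T$ must concentrate on the (infinite) set of red atoms, since the remaining atoms form a finite set. The split reds fall into two qualitatively distinct families under the composition/superscript rules: the equal-index reds $r_{ii}^\ell$ and the distinct-index reds $r_{ij}^\ell$ ($i\neq j$), which compose differently with themselves and with other reds. A case analysis on the composition table shows that no non-principal ultrafilter of $\T$ can straddle both families coherently, so exactly two survive: $\delta$, concentrating on the equal-index reds, and $\rho$, concentrating on the distinct-index reds. These play the role of the two blurs in the template.

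For representability of $\T$, use $\delta$ and $\rho$ as ``joker'' colours in a standard step-by-step construction (analogous to Theorem \ref{ANT}, but adapted to the rainbow setting): grow a countable complete graph, labelling each new edge by an atom of $\T$ or by one of the two non-principal ultrafilters $\delta,\rho$, and use the blur axioms to check that every newly created triangle can be completed consistently. This produces a representation of $\T$.

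Finally, for $\Cm\At\T\notin \bold S\Ra\CA_6$: completeness of $\Cm\At\T$ ensures that the joins $\sum_\ell r_{ij}^\ell$ exist, so the original rainbow algebra $\R$ embeds into $\Cm\At\T$ via $r_{ij}\mapsto \sum_\ell r_{ij}^\ell$, with all other atoms mapped to themselves. If $\Cm\At\T$ lay in $\bold S\Ra\CA_6$, closure under subalgebras would force $\R\in \bold S\Ra\CA_6$. But on the finite rainbow $\R$, \pa\ wins the relation-algebra atomic game corresponding to $\bold S\Ra\CA_6$-membership when $m>n$: by bombarding \pe\ with cones of distinct green tints over a common base he forces her, after enough rounds, to play an inconsistent triple of reds, since her pool of reds is strictly smaller than his pool of greens. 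By the game-theoretic characterisation of $\bold S\Ra\CA_6$ (the relation-algebra analogue of Lemma \ref{rep}, as in \cite{HHbook}), this gives $\R\notin \bold S\Ra\CA_6$, the desired contradiction; since $\sf RRA\subseteq \bold S\Ra\CA_6$, this also rules out representability of $\Cm\At\T$. The main obstacle is the precise identification of exactly two non-principal ultrafilters, which requires meticulous bookkeeping with the split composition table.
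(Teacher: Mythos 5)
Your overall architecture matches the paper's: split the reds of $\bold R_{m,n}$ into $\omega$ copies, embed $\R$ into $\Cm\At\T$ by sending each red to the join of its copies, and kill membership in $\bold S\Ra\CA_{6}$ by \pa's cone-bombardment strategy on the finite rainbow algebra (the paper routes this through the variety $\RA_{m+2}\supseteq \bold S\Ra\CA_{m+2}$ and the rainbow theorem, but the content is the same). Where you genuinely diverge is the representability half. The paper does \emph{not} run a step-by-step construction with $\delta$ and $\rho$ as joker colours; instead it builds an auxiliary rainbow algebra $\B$ from $\bold R_{m,\Delta}$ with $\Delta=n\times\omega\cup m\times\{\omega\}$ (deleting reds whose indices lie in different components), shows $\B$ is representable because \pe\ wins ${\sf EF}_{\omega}^{\omega}(m,m)$, and then defines a bounded morphism from $\At\B$ onto the ultrafilter frame of $\T$, sending the reds of the extra component $m\times\{\omega\}$ to $\delta$ and $\rho$. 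This yields a complete representation of $\T^{+}$, hence a representation of $\T$, while outsourcing all consistency checking to the already-proved representability of a rainbow algebra. Your ANT-style direct construction buys conceptual uniformity with Theorem \ref{ANT}, but the bounded-morphism route buys you the rainbow bookkeeping for free.

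That is also where your sketch has its real gap: "use the blur axioms to check that every newly created triangle can be completed consistently" is precisely the part that does not transfer verbatim from the Maddux setting. In the rainbow signature you must also label edges with greens and whites, place cones, and assign shades of yellow to hyperedges, and the blur conditions of Definition \ref{strongblur} were formulated for composition tables of Maddux type, not for rainbow forbidden triples; without the auxiliary algebra $\B$ you would have to reprove a full rainbow win for \pe\ with the two extra colours $\delta,\rho$ adjoined. Separately, your identification of the two non-principal ultrafilters by "case analysis on the composition table" is pointing at the wrong mechanism: what actually pins them down is the Boolean structure of the term algebra, namely that the trace of any element of $\T$ on $D=\{\r_{ll}^{t}\}$ and on $R=\{\r_{lm}^{t}: l\neq m\}$ is finite or cofinite (this is \cite[Lemma 17.35]{HHbook}), so an ultrafilter containing $D$ (resp.\ $R$) and no atom must be exactly $\delta$ (resp.\ $\rho$). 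That is a statement about which subsets of the reds are term-generated, and it needs to be proved before the case analysis can even begin.
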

\begin{demo}{Sketch} The blown up and blurring is done by splitting the red atoms in the  finite rainbow algebra given in the hypothesis getting  
a weakly representable 
atom structure $\alpha$ 
such that 
$\Cm\alpha\notin \bold S\Ra\CA_{m+2}$ for any $k\geq 6$.
Let $\R=\bold R_{m, n}$ with $3<n<m<\omega$ be as in the hypothesis. Then  \pe\ has a \ws\ in $G_{m+1}^{m+2}(\At\bold R)$, since it clealy 
has a \ws\ 
in the \ef\ game ${\sf EF}_m^m(m, n)$ because $m$ is `longer' than $n$. Here we use the {\it rainbow theorem} \cite[Theorem 18.5]{HHbook}. 
Thus $\bold R\notin \RA_{m+2}$ by \cite[Propsition 12.25, Theorem 13.46 $(4)\iff (5)$]{HHbook}. 
For $5\leq l<\omega$, $\RA_l$  is the class of relation algebras whose canonical extensions have an $l$--dimensional relational basis \cite[Definition 12.30]{HHbook}.
This last class is a variety  \cite[Proposition 12.31]{HHbook}
(properly)  containing
the variety $\bold S\Ra\CA_l$ \cite[Remark 15.13]{HHbook}. 
Hence we readily conclude that $\bold R_{m, n}\notin  \bold S\Ra\CA_{m+2}$. 
Now one `splits'  every red atom  to $\omega$--many copies obtaining the infinite atomic countable 
(term) relation algebra $\T$ with atom structure $\alpha$, cf. \cite[item (4) top of p. 532]{HHbook}. 
Then ${\cal C}=\mathfrak{Cm}\alpha\notin \bold S\Ra\CA_{m+2}$ 
because $\bold R$ embeds into $\Cm\alpha$ by mapping ever red
to the join of its copies, and $\bold S\Ra\CA_{m+2}$ is closed under $\bold S$. 

Now we (completely) represent (the canonical extension of) the term algebra.
Let $D=\{r_{ll}^n: n<\omega, l\in n\}$, and $R=\{r_{lm}^n, l,m\in n, l\neq m\}$.
If $X\subseteq R$, then $X\in \T$ $\iff$ $X$ is finite or cofinite in $R$ and same for subsets of $D$ \cite[Lemma 17.35]{HHbook}..
Let $\delta=\{X\in T: X\cap D \text { is cofinite in $D$}\}$,
and $\rho=\{X\in T: X\cap R\text { is cofinite in $R$}\}$.

Then these are the only non--principal ultrafilters;  they together with the principal ultrafilters of $\T$ corresponding to the atoms, suffice 
to (completely) represent (the canonical extension) $\T$ as follows \cite[Lemma 17.6]{HHbook}: 
Let $\Delta$ be the graph $n\times \omega\cup m\times \{\omega\}$.
Let $\B$ be the rainbow algebra obtained from $\bold R_{m, \Delta}$ by
deleting all red atoms $\r_{ij}$ where $i,j$ are
in different connected components of $\Delta$.

Clearly \pe\ has a \ws\ in ${\sf EF}_{\omega}^{\omega}(m, m)$, and so (using the rainbow theorem) she has a \ws\ in
$G_{\omega}^{\omega}\At(\bold R_{m, m})$.
But $\At\bold R_{m, m}\subseteq \At\B\subseteq \At\bold R_{m, \Delta}$, and so $\B$ is representable.
One  next defines a bounded morphism from $\At\B$ to the ultrafilters of $\T$ which constitute the domain of $\T^+$. 
The two non--principal ultrafilters are images of elements from
$m\times \{\omega\}$, by mapping the red with equal double index,
to $\delta$, and those with distinct indices to $\rho$.
The first copy is reserved to define the rest of the red atoms the obvious way.
The other atoms are the same in both structures.
\end{demo}

Observe that in the above proof the (smaller) parameter $n$ does not play any role and that it 
can well be fixed to be $3$ getting the same result (for any $m>3$). 
One might be tempted to obtain the analogous result for $\CA_n$s, $2<n<\omega$, 
by using the above construction for relation algebras resorting perhaps to  the construction of Hodkinson in \cite{ch}. 
The construction in {\it op.cit}  constructs $\CA$s and $\QEA$s (of every finite dimension $>2$)
from a given atom structure of $\sf RA$s like was done in theorem \ref{ANT}  using cylindric basis. 
But we hasten to add that this {\it cannot be done} with the construction of Hodkinson's as it stands, 
because the atomic $\sf RA$ does not embed in
$\sf Ra$ reduct of the atomic $\CA_n$ constructed from it, if $n\geq 6$. 
Nevertheless, the construction used in theorem \ref{b2}, outlined above,  can be used to show that $\bold S\Nr_n\CA_{n+k}$ for $k\geq 6$ 
is not closed under \de\ completions which is {\it weaker} than non--atom canonicity. 
The latter property implies the former, but not vice--versa. We omit the fairly straightforward proof of this.\footnote{This is an insightful 
observation of Maddux's.}

Our next new blow up and blur construction restricted to $\CA$s, showing that for $2<n<\omega$ and $k\geq 3$, $\bold S\Nr_n\CA_{n+k}$ is not atom--canonical,  has affinity
to the previous proof of theorem \ref{b2}. Worthy of note is that the term `blow up and blur' was not used by Hirsch and Hodkinson 
in their construction recalled in (the proof of) theorem \ref{b2}.  
The Maddux algebra, denoted in \cite[Lemma 5.1]{ANT} and in theorem \ref{ANT} 
by $\R=\mathfrak{E}_k(2, 3)$, renders a cylindric basis of dimension $n$, for any finite $n>3$ by adjusting its number of non--identity atoms
$k$ for the term algebra obtained after blowing it up and blurring $\R$, but it only 
witnesses
non--atom canonicity for $\sf RRA=\bold S\Ra\CA_{\omega}$ and $\sf RCA_n$. 
On the other hand, taking the special case dealt with in \cite[Lemma 17.32]{HHbook} the relation algebra (when $m=4$ and $n=3$), $\bold R_{4, 3}$ used in the proof of 
theorem \ref{b2} does not have an $n$--dimensional cylindric base for $n>3$, hence it works only for relation
algebras and $\CA_3$s and not for higher finite dimensions.
Nevertheless, it has the supreme advantage that it witnesses non atom--canonicity of $\bold S\Ra\CA_k$ for each $k\geq 6$ 
where $k$ is determined by the number of `pebble pairs' in the \ef\ forh game
${\sf EF}_4^4(4, 3).$ This is the sharpest result obtained by taking in the relation algebra $\bold R_{m, n}$ 
the least possible values of $m$ and $n$, 
namely, $m=4$ and $n=3.$
 
Let $2<n<\omega$. We want a finite $n$--dimensional cylindric algebra
that also witnesses non atom--canonicity of $\bold S\Nr_n\CA_{n+k}$ for some finite $k>1$ 
and,  we further want to control this $k$ as much as possible; the smaller $k$ is the better.
Rainbows offer solace here by adjusting the number of `pebble pairs' in the same \ef\ forth game 
and lifting it to rainbow $\CA_n$s. This game will be ${\sf EF}_{n+1}^{n+1}(n+1, n)$ lifted to the 
$n$--dimensional rainbow algebra $\A_{n+1, n}$. 

Note that this is the same game used for relation algebras  
when $n=3$. We could not lift the construction from $\RA$s to $\CA$s by applying 
Hodkinson's construction in \cite{ch}
to the relation algebra $\T$, {\it  but what we can (and will) do is to `lift' the parameters $4$ and $3$  appearing as indicies 
in $\bold R_{4,3}$}.  In the $\CA_n$ context, we will blow up and blur 
the rainbow algebra $\A_{n+1, n}$.  Since, roughly, $\CA_3$ is the natural vehicle for relation algebras, 
it seems that this is most  natural (generalization) thing to do. 

Blowing up and blurring  
$\A_{n+1, n}$ will give a weakly representable atom structure $\alpha$, 
such that $\Cm\alpha\notin \bold S\Nr_n\CA_{n+3}$. The case $\bold S\Nr_n\CA_{n+2}$ not covered by theorem \ref{can} 
was approached conditionally (and differently) in theorem \ref{ANT} by blowing up a finite relation algebra having an $n$--blur
and no infinite dimenional $n+2$--dimensional hyperbasis (equivalently having no  $n+2$--flat representation).

In \cite{Hodkinson}, Hodkinson proves that ${\sf RCA}_n$ ($2<n<\omega)$
is not atom--canonical. Hodkinson's proof is semantical; ours is syntactical implemented by blowing up and blurring
a finite rainbow polyadic-equality algebra, in which `the number' of greens is $n+1$ and
the reds $n$.
The blow up and blur addition,  will allow us to refine and indeed strengthen
Hodkinson's result,  showing that for any class $\bold K$, such that $\bold S\Nr_n\CA_{n+3}\subseteq \bold K\subseteq \RCA_n$, $\bold K$
is  not atom--canonical.

This applies to the infinitely many varieties $\bold S\Nr_n\CA_{n+k}$, $k\geq 3$ \cite{t}.
The next lemma will enable to obtain the result (on non--atom canonicity) for ${\sf RDf}_n$ 
by bouncing it back to the (known) $\RCA_n$ case. It is generally useful to transfer results from $\RCA_n$s to their diagonal free reducts.
Also, it generalizes a result of Johnson \cite[Theorem 5.4.26]{HMT2}. Henceforth, we write $\Rd_{df}$ short hand for 
`diagonal free reduct'.

\begin{lemma}\label{dfb} Let $2<n<\omega$. Assume that $\A\in \CA_n$, $\Rd_{df}\A$ is a diagonal free  
cylindric set algebra (of dimension $n$) with base $U$,  and $R\subseteq U\times U$ are as in the hypothesis of \cite[Theorem 5.1.49]{HMT2}. 
Let $E=\{x\in A:  (\forall x, y\in {}^nU)(\forall i <n)(x_iR y_i\implies (x\in X\iff y\in X))\}$.
Then $\{x\in \A: \Delta x\neq n\}\subseteq E$ and $E\in \CA_n$ 
is closed under infinite intersections. In particular, if $\A\in \CA_n$, is such that its diagonal free reduct is representable,
and $\A$ is generated by $\{x\in \A: \Delta x\neq n\}$ using infinite intersections (together with the other cylindric operations) then
$\A\in \RCA_n$.
\end{lemma}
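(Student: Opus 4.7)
The plan is to establish each of the three listed properties of $E$ separately and then combine them with the generation hypothesis to extract a full $\CA_n$-representation of $\A$ by quotienting the base $U$ of the given $df$-representation by the equivalence relation determined by $R$.

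First I would verify that $\{x \in A : \Delta x \neq n\} \subseteq E$. If $\Delta x \neq n$ then there exists $i < n$ with ${\sf c}_i x = x$, so in the given $df$-representation membership in $x$ is insensitive to the $i$-th coordinate of any tuple. Combining this with the specific way in which $R$ is introduced in \cite[Theorem~5.1.49]{HMT2}, where $R$ is chosen so that its coordinate-wise action is precisely what a cylindrification in that coordinate identifies away, one then verifies the $R$-invariance condition defining $E$ on each coordinate in turn. Next I would check that $E$ is closed under the $\CA_n$ operations and under arbitrary intersections. Boolean closure is immediate because the condition defining $E$ is a universal implication on subsets of ${}^nU$, preserved by unions, intersections, and complements. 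For cylindrifiers: if $x \in E$ and $s, t \in {}^nU$ are $R$-related coordinate-wise, then $s \in {\sf c}_j x$ iff some $j$-variant of $s$ lies in $x$, and invoking the $R$-invariance of $x$ on the corresponding $j$-variant of $t$ gives $t \in {\sf c}_j x$. For diagonals ${\sf d}_{ij}$, the required compatibility between $R$ and the diagonal interpretation is precisely the structural property of $R$ isolated in \cite[Theorem~5.1.49]{HMT2}. Closure under infinite intersections is immediate since a universal implication is preserved under intersection.

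For the concluding clause, assume $\Rd_{df}\A$ has a $df$-representation $h$ and that $\A$ is generated by $\{x \in \A : \Delta x \neq n\}$ via the cylindric operations together with infinite intersections. The three facts above show that $E$ contains all the generators and is closed under all these operations, so $E = A$; hence every element of $\A$ is $R$-invariant in $h$. Quotienting $U$ by the equivalence relation generated by $R$ and pushing $h$ forward along $s \mapsto [s]$ into $\wp({}^n(U/R))$ yields a well-defined map: $R$-invariance guarantees well-definedness, the Boolean and cylindrifier clauses pass through unchanged, and the diagonals are now represented by the genuine diagonals of ${}^n(U/R)$ precisely because $R$ was engineered that way. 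This produces a bona fide $\CA_n$-representation of $\A$, giving $\A \in \RCA_n$.

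The hard part will be the first step: the hypothesis ${\sf c}_i x = x$ gives freedom in only the single coordinate $i$, whereas the defining condition of $E$ demands invariance under coordinate-wise $R$-equivalence across all $n$ coordinates simultaneously. One cannot expect this abstractly; the resolution must come from the precise construction of $R$ in \cite[Theorem~5.1.49]{HMT2}, which is fine-tuned so that the one-coordinate freedom, once propagated by the remaining cylindrifiers, yields the full $n$-coordinate invariance required by membership in $E$.
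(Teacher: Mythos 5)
Your proposal is correct and takes essentially the same route as the paper: the containment $\{x:\Delta x\neq n\}\subseteq E$, the closure of $E$ under the $\CA_n$ operations, and the final quotient-by-$R$ argument are all the content of \cite[Lemma 5.1.50, Theorem 5.1.51]{HMT2}, which the paper simply cites and which you partly reconstruct before (rightly) deferring the $R$-specific details to that source. The only genuinely new verification in the lemma is that $E$ is closed under \emph{infinite} intersections, and you dispatch it exactly as the paper does, by observing that the defining condition of $E$ is a universal biconditional preserved under arbitrary intersections.
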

\begin{proof} \cite[Lemma 5.1.50, Theorem 5.1.51]{HMT2}. 
In the former lemma, using the notation in {\it op.cit}, 
one just has to check that $E$ (as defined above) is closed under infinite intersections. 
This is completely straightforward following 
directly from the  definition of $E.$ 
In more detail, let $X_j: j\in J$ be in $E$. We will show that $\bigcap_{j\in J}X_j\in E$. 
Let $x, y\in {}^{n}U$ such that $x_iRy_i$ for all $i<\alpha$, and assume that $x\in \bigcap_{j\in J} X_j$. Then $x\in X_j$ for every $j\in J$. 
Now fix  $i\in J$. Then $x\in X_i$, and $X_i\in E$, so by definition of $E$ we get that  $y\in X_i$. Since $i$ was arbitrary, 
we get that $y\in \bigcap_{j\in J}X_j.$ By symmetry we are done.
\end{proof}

The following lemma to be used in the proof of theorem \ref{can} is proved for \cite{mlq}. 
\begin{lemma}\label{n}
Let $2<n<m<\omega$. Let $\A\in \CA_n$ be atomic.  Then 
Then \pe\ has a \ws\ in $G^m_{\omega}(\At\A)\iff  \A\subseteq_c \mathfrak{Nr}_n\D$, $\D\in {\sf D}_m$.
\end{lemma}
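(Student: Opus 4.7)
My plan is to prove the two directions of the equivalence separately, exploiting the standard correspondence between atomic games on a $\CA_n$ and relativized representations of its $n$-dilations.

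For the implication ($\Leftarrow$), assume $\A\subseteq_c \mathfrak{Nr}_n\D$ with $\D\in {\sf D}_m$ having top element $V\subseteq {}^mU$. Each atom $a\in\At\A$, viewed as an element of $\mathfrak{Nr}_n\D$, is a subset of $V$ depending only on its first $n$ coordinates. \pe's strategy is to maintain, after round $t$, a pair $(N_t,v_t)$ where $v_t\colon\nodes(N_t)\to U$ extends to some tuple $\bar s\in V$, and $v_t\circ\bar x\in N_t(\bar x)$ holds in $\D$ for every $\bar x\in {}^n\nodes(N_t)$. When \pa\ plays $(N,i,a,\bar x)$ with $N(\bar x)\leq \cyl i a$, the invariant forces $v\circ\bar x\in \cyl i a$ in $\D$, yielding a witness $u\in U$ with $(v\circ\bar x)(i\mapsto u)\in a$. \pe\ then picks a node $y_i<m$ (fresh if $|\nodes(N)|<m$, otherwise reused), updates the labelling by $y_i\mapsto u$, and defines her response $M$ by assigning to each $\bar z\in {}^n\nodes(M)$ the unique atom of $\A$ whose image in $\D$ contains the updated tuple. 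The closure of $V$ under replacements $[i|j]$, which is the defining axiom of ${\sf D}_m$, is exactly what guarantees the updated labelling remains extendible to $V$ after node reuse; existence and uniqueness of the covering atom follow from atomicity of $\A$, since $\sum\At\A=1_\A$ passes to $1_\D$ via $\subseteq_c$.

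For the implication ($\Rightarrow$), use \pe's \ws\ $\sigma$ to build $\D$ by a standard fair-scheduling saturation. Arrange an $\omega$-length schedule in which \pa's moves enumerate every triple $(\bar x,i,a)$ cofinally often, and let $\{N_\lambda:\lambda\in\Lambda\}$ be the resulting tree of networks produced by $\sigma$. Let $U$ consist of formal tokens for nodes occurring across this tree (identified whenever \pe\ equates them in her responses), and let $V\subseteq {}^mU$ be the set of labellings induced by some $N_\lambda$. Closure of $V$ under each $[i|j]$ is immediate, since such a restriction of a labelling is itself a labelling of an $\equiv_i$-related network. Let $\D\subseteq\wp(V)$ be the set algebra generated by $\{\hat a:a\in\At\A\}$ together with the cylindric operations relativized to $V$, where $\hat a$ is the set of labellings inducing the atom $a$. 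Because \pe\ wins every play, the \ws\ rules out forbidden configurations, so $\D\in{\sf D}_m$, and the map $a\mapsto\hat a$ extends to a $\CA_n$-homomorphism into $\mathfrak{Nr}_n\D$, commuting with cylindrifiers and diagonals.

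The main obstacle I anticipate is upgrading this to a \emph{complete} embedding: for every non-zero $a\in\A$ I need some $\bar s\in V$ with $\bar s\in\hat a$. By atomicity this reduces to showing $\hat a\neq\emptyset$ for every atom $a$, and this is forced by the $\omega$-round schedule, which cofinally addresses every request $(\bar x,i,a)$: some branch of the saturation eventually places $a$ on a tuple of an $N_\lambda$, producing a labelling in $\hat a$. The delicate calibration between the bound $m$ on nodes and the class ${\sf D}_m$ is that node reuse by \pa\ prevents $V$ from being closed under full permutations, explaining why the dilation lies in ${\sf D}_m$ rather than $\CA_m$; this is precisely why the game $G^m_\omega$ characterises neat embeddings into ${\sf D}_m$ and not into $\CA_m$.
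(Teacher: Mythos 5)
The paper does not actually prove Lemma \ref{n}: it is imported from \cite{mlq}, and the only argument of this kind carried out in the text is the appendix proof of Lemma \ref{Thm:n}. Measured against that argument, your ($\Leftarrow$) direction has a genuine gap. You maintain a point $\bar s\in V$ realising the current network and, after each cylindrifier move, you label every new $n$-tuple $\bar z$ of nodes by ``the unique atom of $\A$ whose image in $\D$ contains the updated tuple''. Such an atom need not exist. The hypothesis $\A\subseteq_c\mathfrak{Nr}_n\D$ yields at best $\sum^{\D}\{{\sf s}_{\bar z}\,a: a\in\At\A\}=1$, and in a (relativized) set algebra a supremum equal to $1$ only means that $\bigcup_{a}{\sf s}_{\bar z}\,a$ meets every non-zero element of $\D$; it does not mean that this union exhausts $V$. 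A point of $V$ can perfectly well avoid the image of every atom, and then your labelling step breaks down. This is precisely why the appendix runs the game on the non-zero element $N^{+}=\prod_{\bar x}{\sf s}_{\bar x}N(\bar x)$ instead of on a realising point: the invariant $N^{+}\neq 0$ can always be propagated from the density statement just quoted (facts (1)--(2) of the appendix), whereas ``some $\bar s\in V$ realises $N$'' cannot. Your ($\Leftarrow$) argument should be recast with that invariant.

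Two further problems. In ($\Rightarrow$), your fair schedule ``enumerating every triple $(\bar x,i,a)$ cofinally often'' in $\omega$ rounds tacitly assumes that $\At\A$ is countable, which the lemma does not; for uncountably many atoms one must instead work with the family of all networks from which \pe\ still has a \ws\ (an $m$-dimensional basis) rather than with a single saturated branch. Finally, your diagnosis that ``node reuse by \pa'' is what drops the dilation from $\CA_m$ to ${\sf D}_m$ is backwards: in $G^m_\omega$ there is no reuse --- that is the extra power of $F^m$. What fails in the absence of reuse and amalgamation moves is commutativity of cylindrifiers in the constructed dilation; closure of $V$ under the replacements $[i|j]$ is all that survives, and that is exactly the defining condition of ${\sf D}_m$.
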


Having the necessary tools at hand, now we ready to formulate and prove:
\begin{theorem}\label{can} 
Let $n$ be a finite ordinal $>2$ and $\K$ is a class between $\Sc$ and $\QEA$.
Assume that $m\geq n+3$. Then the varieties 
$\bold S{\sf Nr}_n\K_m$, ${\sf RDf}_n$, 
$\bold S\Nr_n{\sf G}_m$, and 
$\bold S\Nr_n{\sf D}_m$ are not atom--canonical. In particular, such varieties are not closed under \de\ completions 
and are not Sahlqvist axiomatizable.   
\end{theorem}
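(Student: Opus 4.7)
I plan to run a blow--up--and--blur construction on the finite rainbow $\PEA_n$ algebra $\A = \A_{n+1, n}$, in direct analogy with the argument sketched around Theorem \ref{b2} but now genuinely $n$--dimensional. First I would show that \pa\ has a \ws\ in the $\omega$--rounded rainbow coloured--graph game on $\At\A$ using only $n+3$ nodes. In the private \ef\ forth game ${\sf EF}^{n+2}_{n+2}(n+1, n)$ the green side has one vertex more than the red side, so \pa\ wins trivially by pigeonhole; the rainbow theorem \cite[Theorem~18.5]{HHbook} lifts this to a \ws\ for \pa\ on coloured graphs, implemented by bombarding \pe\ with $i$--cones over a fixed base with distinct green tints until she is forced to form an inconsistent red triangle. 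Lemma \ref{Thm:n} then gives $\A \notin \bold S_c\Nr_n\K_{n+3}$, and a fortiori $\A \notin \bold S\Nr_n\K_{n+3}$.

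\textbf{Splitting, embedding, representing.} Next I would split every red atom of $\A$ into $\omega$ many copies, leaving the other atoms untouched, to obtain a countable rainbow atom structure $\alpha$. The map sending each red to the join of its copies (and fixing all other atoms) embeds $\A$ into $\Cm\alpha$, so $\Cm\alpha \notin \bold S\Nr_n\K_{n+3}$ and hence $\Cm\alpha \notin \bold S\Nr_n\K_m$ for every $m\geq n+3$. In the term algebra $\T = \Tm\alpha$ this join is absent because only finite or cofinite sums of red copies exist, so $\T$ avoids the obstruction. Following Theorem \ref{b2}, I would identify two non--principal ultrafilters of $\T$ --- $\delta$ consisting of the cofinite sets of same--index red copies and $\rho$ of the cofinite sets of distinct--index ones --- and build a complete representation of $\T^+$ from a rainbow algebra on an extended coloured graph $\Delta = n\times\omega \cup (n+1)\times\{\omega\}$, via a bounded morphism on atom structures sending the two $\omega$--tagged reds to $\delta$ and $\rho$. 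This places $\T$ in $\RPEA_n$, so its $\K$--reduct (for every $\K$ between $\Sc$ and $\QEA$) lies in $\RK_n \subseteq \bold S\Nr_n\K_m$; since $\At\T = \alpha$ and $\Cm\alpha\notin \bold S\Nr_n\K_m$, non--atom--canonicity of $\bold S\Nr_n\K_m$ and failure of closure under \de\ completions follow at once. The cases $\bold S\Nr_n{\sf G}_m$ and $\bold S\Nr_n{\sf D}_m$ go through with the same $\T$ and $\alpha$, because the game argument only exploits the cylindric fragment of the signature and Lemma \ref{Thm:n} applies to ${\sf D}_m$ directly.

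\textbf{${\sf RDf}_n$, Sahlqvist, and the main obstacle.} For the diagonal--free variety, $\Rd_{df}\T \in {\sf RDf}_n$ automatically; if ${\sf RDf}_n$ were atom--canonical then $\Rd_{df}\Cm\alpha = \Cm\At(\Rd_{df}\T) \in {\sf RDf}_n$, and Lemma \ref{dfb} applied to $\Cm\alpha$ would force $\Cm\alpha \in \RCA_n \subseteq \bold S\Nr_n\CA_m$, contradicting the previous paragraph. The non--Sahlqvist conclusion is then immediate from Venema's theorem \cite[Theorem 2.96]{HHbook}, since Sahlqvist varieties of $\sf BAO$s are atom--canonical. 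The main technical obstacle is the representation step for $\T$: one must verify that the bounded morphism respects the \emph{full} $n$--dimensional rainbow apparatus --- cone configurations, forbidden green--green and mixed monochromatic triangles, the action of the transpositions $\s_{[i,j]}$, and the interaction of $\delta$ and $\rho$ with every cylindrifier and diagonal --- rather than just the binary rainbow rules used in the $\RA$ version of the argument. The auxiliary application of Lemma \ref{dfb} also requires a short but genuine check that $\Cm\alpha$ is generated by elements of dimension $<n$ under infinite meets, which is natural for rainbow complex algebras but needs to be spelled out.
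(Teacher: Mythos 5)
Your skeleton coincides with the paper's: split the reds of $\A_{n+1,n}$ into $\omega$ copies, embed $\A_{n+1,n}$ into the complex algebra by sending each red to the join of its copies, have \pa\ win the $n+3$--node game by stacking cones with distinct green tints, invoke Lemma \ref{Thm:n} (plus Lemma \ref{n} for ${\sf G}_m$, ${\sf D}_m$) and Lemma \ref{dfb} for ${\sf RDf}_n$. The one step where you depart from the paper is the crucial one, and it is where your plan has a genuine gap: the representability of the term algebra $\T=\Tm\alpha$. You propose to transplant the two--ultrafilter/bounded--morphism argument of Theorem \ref{b2} from relation algebras. But in the $n$--dimensional rainbow setting the atoms are equivalence classes of surjections onto coloured \emph{graphs}, not single binary colours: the red atoms fall into finitely many copy--classes (one for each red coloured graph up to the relation $\sim$), each class is split into $\omega$ copies, and a single graph may carry several red edges. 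Consequently the restriction of $\T$ to the reds is not the finite--cofinite algebra on a two--block set $D\cup R$, and the claim that $\T$ has ``exactly two non--principal ultrafilters $\delta$ and $\rho$'' is false here; there are at least as many as there are red copy--classes, and the target of your bounded morphism would have to account for all of them simultaneously while respecting cones, forbidden triples, the $\s_{[i,j]}$, and all $n$ cylindrifiers. This is not a routine adaptation, and the paper explicitly warns (in the discussion following Theorem \ref{b2}) that pushing the relation--algebra construction into the cylindric world does not go through ``as it stands.''

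The paper avoids this entirely by representing $\T$ model--theoretically \`a la Hodkinson: \pe\ wins the $\omega$--rounded rainbow game by using a \emph{shade of red} $\rho$ lying outside the rainbow signature whenever she is forced to play a red, producing an $n$--homogeneous model $M$; one then takes the relativized set algebra on $W=\{\bar a\in{}^nM: M\models\bigwedge_{i<j<n}\neg\rho(x_i,x_j)(\bar a)\}$ and uses the back--and--forth system of \cite[Lemma 3.10]{Hodkinson} to show that classical and relativized semantics agree for $L_n$--formulas, so $\A\cong$ a genuine quasi--polyadic set algebra and $\Tm\At\A\subseteq\A$ is representable. If you want your proof to close, you should either carry out this relativized--semantics argument (which is what the paper does) or supply a full analysis of the ultrafilter frame of $\T$ in $n$ dimensions together with a verified bounded morphism --- the latter being substantially harder than the binary case you are extrapolating from. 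Two further small points: \pa's \ws\ is lifted from ${\sf EF}^{n+1}_{n+1}(n+1,n)$, not ${\sf EF}^{n+2}_{n+2}$, with the two extra nodes accounting for the passage from pebbles to graph nodes; and for $\K$ below $\CA$ (no diagonals) the application of Lemma \ref{Thm:n} needs the finiteness of $\A_{n+1,n}$ to secure complete additivity of the dilation, a step you should not omit.
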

\begin{proof}
\cite{mlq}  We start with finite dimensions.  We first give the general idea for $\CA_n$ with $2<n<\omega$.
We use a rainbow construction. With a slight abuse of notation we denote the rainbow $\CA_n$ 
based on $\sf G$ (the greens) and $\sf R$ by $\A_{\sf G, \R}$. 
Fix $k\geq 3$  (possibly infinite).  We start with the 
rainbow algebra $\A_{n+k-2, 1}$ which is finite $\iff$ $k<\omega$. 
The dimension $d=n+k$ (here if $k$ is infinite we mean ordinal addition so that $n+k=k$)
for which we can prove that $\V=\bold S\Nr_n\CA_{n+k}$ is not atom--canonical,
is determined by the number of greens we start off with, which we denote by  $\sf num(g)$. 
We have ${\sf num({\sf g})}=n+k-2$ so that
$n+k=2+ {\sf num(\sf g)}$. 

Here the number $2$ is the {\it increase resulting from  lifting  the \ws\ of \pa\ in the \ef\ forth private game between \pa\ and \pe,
namely, ${\sf EF}_{n+k-2}^{n+k-2}(n+k-2, n)$ played
on the complete irreflexive graphs
$n+k-2$ (the greens) and $n$ (the reds),   to the number of nodes used by \pa\ to implement his \ws\ in the `graph game' played 
on (networks of) the rainbow algebra $\A_{n+k-2, n}$ which is $n+k$.}

We blow up and blur $\A_{n+k-2, n}$ by splitting the ` $n$ red atoms'. This is not entirely  accurate because
we will  be  splitting  red  graphs (a graph graph being one with at least one edge labelled by a red) each into $\omega$ many.
The outcome of this splitting will be a representable
countable atomic algebra $\A$ similar to the term algebra used by
Hodkinson in \cite[Definition 4.1]{Hodkinson}; when $k<\omega$;  the only difference is  that we use only $n+k-2$ greens not infinitely many,
when $k=\omega$ it is the same.
As long as their number outfits the reds, \pa\ can win in a finite rounded game. The \de\ completion of $\A$, call it $\C$, will be
outside $\bold S\Nr_n\CA_{n+k}$, because $\A_{n+k-2, n}$ is outside $\bold S\Nr_n\CA_{n+k}$
by the fact that  \pa\  has a \ws\  in the game $G_{\omega}^{n+k}(\At\A_{n+k-2})$ (in only finitely many rounds)
and $\A_{n+k-2, n}$ embeds into $\C$. 
Here, unlike the number of nodes, which determines 
when the complex algebra $\C=\Cm\At\A$ {\bf stops to be representable}, the number of rounds is 
irrelevant. 

So although $\A$ has an ($\omega$--square) representation,
by lemma \ref{Thm:n}, its \de\ completion $\C$ will not even have an $n+k$--square representation.
This implies that $\C$ does not have an $n+k$--dilation, even if we do not require full fledged commutativity 
of cylindrifiers in this $n+k$--dilation; the dilation may not even be a ${\sf D}_{n+k}$.
It is precisely at this point  (dimension$=n+k$)  
that $\C$ stops to be representable and this point is determined by the number of greens we started off with;  recall that ${\sf num}(\g)=n+k-2$. We cannot take the parameter $k$ to be equal to $2$, 
because in this case, ${\sf num}(\g)=n$, and so the particular argument here does not work
work since $n$ is `not longer' than $n$.  Hence we 
approached the case $k=2$  separately, though only conditionally.

We work with $k=3$ and any $\K$ between $\Sc$ and $\QEA$. 
This gives the result for any larger $k$. 
Now for the details. Fix $2<n<\omega$.

{\bf Blowing up and blurring  $\A_{n+1, n}$ forming a weakly representable atom structure $\At$}:
Take the finite polyadic equality algebra rainbow algebra $\A_{n+1, n}$
where the reds $\sf R$ is the complete irreflexive graph $n$, and the greens
are  ${\sf G}=\{\g_i:1\leq i<n-1\}
\cup \{\g_0^{i}: 1\leq i\leq n+1\}$, endowed with the polyadic operations.
Denote its finite atom structure by ${\bf At}_f$; 
so that ${\bf At}_f=\At(\A_{n+1, n})$.

One  then replaces the red colours 
of the finite rainbow algebra of $\A_{n+1, n}$ each by  infinitely many reds (getting their superscripts from $\omega$), obtaining this way a weakly representable atom structure $\bf At$.
The resulting atom structure after `splitting the reds', namely, $\bf At$,  is 
like the weakly but not strongly representable 
atom structure of the atomic, countable and simple algebra $\A$ as defined in \cite[Definition 4.1]{Hodkinson}, the sole difference is that we have $n+1$ greens
and not infinitely many as is the case in \cite{Hodkinson}. We denote our algebra also by $\A$. 

The rainbow signature \cite[Definition 3.6.9]{HHbook2} $L$ now consists of $\g_i: 1\leq i<n-1$, $\g_0^i: 1\leq i\leq n+1$,
$\w_i: i<n-1$,  $\r_{kl}^t: k<l< n$, $t\in \omega$,
binary relations, and $n-1$ ary relations $\y_S$, $S\subseteq n+1$.
There is a shade of red $\rho$; the latter is a binary relation that is {\it outside} the rainbow signature,
but it labels coloured graphs during a `rainbow game'.
 \pe\ can win the rainbow $\omega$--rounded game
and build an $n$--homogeneous model $M$ by using $\rho$ when
she is forced a red;  \cite[Proposition 2.6, Lemma 2.7]{Hodkinson}.
From now on, forget about $\rho$; having done its task as a colour  to (weakly) represent $\A$, it will play no further role.

Having $M$ at hand, one constructs  two atomic $n$--dimensional set algebras based on $M$, sharing the same atom structure and having 
the same top element.  
The atoms of each will be the set of coloured graphs, seeing as how, quoting Hodkinson \cite{Hodkinson} such coloured graphs are `literally indivisible'. 
Now $L_n$ and $L_{\infty, \omega}^n$ are taken in the rainbow signature (without $\rho$). Continuing like in {\it op.cit}, deleting the one available red shade, set
$W = \{ \bar{a} \in {}^n M : M \models ( \bigwedge_{i < j <n} \neg \rho(x_i, x_j))(\bar{a}) \},$
and for $\phi\in L_{\infty, \omega}^n$, let
$\phi^W=\{s\in W: M\models \phi[s]\}.$
Here $W$ is the set of all $n$--ary assignments in
$^nM$, that have no edge labelled by $\rho$.
We note that $\rho$ is used by \pe\ infinitely many times during the game forming a `red clique' in $M$ \cite{Hodkinson}.

Let $\A$  be the relativized set algebra with domain
$\{\varphi^W : \varphi \,\ \textrm {a first-order} \;\ L_n-
\textrm{formula} \}$  and unit $W$, endowed with the
usual concrete quasi--polyadic operations read off the connectives.
Classical semantics for {\it $L_n$ rainbow formulas} and their
semantics by relativizing to $W$ coincide \cite[Proposition 3.13]{Hodkinson} {\it but not with respect to 
$L_{\infty,\omega}^n$ rainbow formulas}.
This depends essentially on \cite[Lemma 3.10]{Hodkinson}, which is the heart and soul of the proof in \cite{Hodkinson}, and for what matters this proof.
The referred to lemma says that any permutation $\chi$ of $\omega\cup \{\rho\}$,
$\Theta^{\chi}$ as defined in  \cite[Definitions 3.9, 3.10]{Hodkinson} is an $n$ back--and--forth system
induced by any permutation of $\omega\cup \{\rho\}$.

Hence the set algebra $\A$ is isomorphic to a quasipolyadic set algebra 
(of dimension $n$) having top element $^nM$, so $\A$
is simple, in fact its $\Df$ reduct is simple.
Let $\E=\{\phi^W: \phi\in L_{\infty, \omega}^n\}$
\cite[Definition 4.1]{Hodkinson}
with the operations defined like on $\A$ the usual way.
$\Cm\bf At$ is complete and, so like in \cite[Lemma 5.3]{Hodkinson}
we have an isomorphism from $\Cm\bf At$  to $\E$ defined
via $X\mapsto \bigcup X$.
We have $\At\A=\At\Tm(\At\A)=\bf At$ (where $\Tm(\At\A)$ denotes the subalgebra of $\Cm\At\A$ generated by the atoms; the term algebra) 
and $\Tm\At\A\subseteq \A$, hence $\Tm\At\A$ is representable.
The atoms of $\A$, $\Tm\At\A$ and $\Cm\At\A=\Cm \bf At$ are the coloured graphs whose edges are {\it not labelled} by $\rho$.
These atoms are uniquely determined by $\sf MCA$ formulas in the rainbow signature of $\bf At$  as in
\cite[Definition 4.3]{Hodkinson}.

{\bf Embedding $\CA_{n+1, n}$ into the $\Cm\bf At$ the \de\ completion of $\Tm\bf At$}: Now to embed $\A_{n+1,n}$ into $\Cm{\bf At}=\Cm\At\A$, we need some preparing to do. 
To start with, we Identify $\r$ with $\r^0$, so that we consider that $\bf At_f\subseteq \bf At$.  Let ${\sf CRG}_f$ be the class of coulored graphs on 
$\bf At_f$ and $\sf CRG$ be the class of coloured graph on $\bf At$. By the above identification, we 
can assume that  ${\sf CRG}_f\subseteq \sf CRG$.
Write $M_a$ for the atom that is the (equivalence class of the) surjection $a:n\to M$, $M\in \sf CGR$.
Here we identify $a$ with $[a]$; no harm will ensue.
We define the (equivalence) relation $\sim$ on $\bf At$ by
$M_b\sim N_a$, $(M, N\in {\sf CGR}):$
\begin{itemize}
\item $a(i)=a(j)\Longleftrightarrow b(i)=b(j),$

\item $M_a(a(i), a(j))=\r^l\iff N_b(b(i), b(j))=\r^k,  \text { for some $l,k$}\in \omega,$

\item $M_a(a(i), a(j))=N_b(b(i), b(j))$, if they are not red,

\item $M_a(a(k_0),\dots, a(k_{n-2}))=N_b(b(k_0),\ldots, b(k_{n-2}))$, whenever
defined.
\end{itemize}
We say that $M_a$ is a {\it copy of $N_b$} if $M_a\sim N_b$ (by symmetry $N_b$ is a copy of $M_a$.) 
Indeed, the relation `copy of' is an equivalence relation on $\bf At$.  An atom $M_a$ is called a {\it red atom}, if $M$ has at least one red edge. 
Any red atom has $\omega$ many copies, that are {\it cylindrically equivalent}, in the sense that, if $N_a\sim M_b$ with one (equivalently both) red,
with $a:n\to N$ and  $b:n\to M$, then we can assume that $\nodes(N) =\nodes(M)$ 
and that for all $i<n$, $a\upharpoonright n\sim\{i\}=b\upharpoonright n\sim \{i\}$.
In $\Cm\bf At$, we write $M_a$ for $\{M_a\}$ 
and we denote suprema taken in $\Cm\bf At$, possibly finite, by $\sum$.
If $N_b$ is a red copy of $M_a$, then we may denote $N_b$ by $M_a^{(j)}$ $(j\in \omega)$. 
Observe that a red atom $M_a$ has $\omega$ many copies 
forming a countable (infinite) set $\{M_a^{(j)}: j\in \omega\}$ of red graphs. If $M_a$ is a red atom, then by $\sum_j M_a^{(j)}$ we understand  the 
infinite sum of its copies evaluated in $\Cm {\bf At}$. If $M_a$ is not red, then it has only one copy, namely, itself. 
We define the map $\Theta$ from $\A_{n+1, n}=\Cm{\bf At_f}$ to $\Cm\bf At$,
by $\Theta(X)=\bigcup_{x\in \bf At_f}\Theta(x)$ $(X\subseteq \bf At_f$),  
by specifing first its values on ${\sf At_f}$,
via $M_a\mapsto \sum_jM_a^{(j)}$; each atom maps to the suprema of its 
copies.  If $M_a$ is not red,   then by $\sum_jM_a^{(j)}$,  we understand $M_a$.
This map is well-defined because $\Cm\bf At$ is complete. 
We check that $f$ is an injective homomorphim. Injectivity follows from $M_a\leq f(M_a)$, hence $f(x)\neq 0$ 
for every atom $x\in \At(\A_{n+1, n})$.
Now we check presevation of some of the operations. 
The Boolean join is obvious. We check cylindrifiers and substitutions.

(1) Cylindrifiers: Let $i<n$. By additivity of cylindrifiers, we restrict our attention to atoms 
$M_a\in \bf At_f$ with $a:n\to M$, and $M\in \sf CRG_f\subseteq \sf CRG$. Then: 
$$f({\sf c}_i^{\Cm\bf At_f}a)=f (\bigcup_{[c]\equiv_i[a]} M_c)
=\bigcup_{[c]\equiv_i [a]}f(M_c)$$
$$=\bigcup_{[c]\equiv_i [a]}\sum_j M_c^{(j)}
=\sum_j \bigcup_{[c]\equiv_i [a]}M_c^{(j)}$$
$$=\sum _j{\sf c}_i^{\Cm\bf At}M_a^{(j)}
={\sf c}_i^{\Cm\bf At}(\sum_j M_a^{(j)})
={\sf c}_i^{\Cm\bf At}f(a).$$

(2) Substitutions: Let $i, k<n$. By additivity of the ${\sf s}_{[i,k]}$s, we again restrict ourselves to atoms of the form $M_a$ as specified in the previous items.
Now computing we get:
$f({\sf s}_{[i,k]}^{\Cm\bf At_f} M_a)= f(M_{a\circ [i,k]})=  \sum_j^{\Cm\bf At}(M_{a\circ [i,k]}^{(j)})=\sum_j {\sf s}_{[i,k]}^{\Cm\bf At}{} M_a^{(j)}={\sf s}_{[i,k]}^{\Cm \bf At}{}(\sum_{j}M_a^{(j)})=
{\sf s}_{[i,k]}^{\Cm\bf At}{}f(M_a).$

We have proved that $\A_{n+1, n}$ embeds into $\Cm \bf At$, 
so that it is not blurred at the level of the last 
complex algebra.

{\bf \pa\ has  a  \ws\ in $G^{n+3}\At(\CA_{n+1, n})$:} It is straightforward to show that 
\pa\ has \ws\ first in an  \ef\ forth  private game played between \pe\ and \pa\ on the complete
irreflexive graphs $n+1$ and $n$ in 
$n+1$ rounds. This game lifts to a graph game  \cite[pp.841]{HH} on $\At_f$.
 \pa\  lifts his \ws\ from the private \ef\ forth game, to the graph game on ${\bf At}_f=\At(\A_{n+1,n})$
using the standard rainbow strategy \cite{HH}.

In his zeroth move, \pa\ plays a graph $\Gamma$ with
nodes $0, 1,\ldots, n-1$ and such that $\Gamma(i, j) = \w_0 (i < j <
n-1), \Gamma(i, n-1) = \g_i ( i = 1,\ldots, n-2), \Gamma(0, n-1) =
\g^0_0$, and $ \Gamma(0, 1,\ldots, n-2) = \y_{n+1}$. This is a $0$-cone
with base $\{0,\ldots, n-2\}$. In the following moves, \pa\
repeatedly chooses the face $(0, 1,\ldots, n-2)$ and demands a node
$\alpha$ with $\Phi(i,\alpha) = \g_i$, $(i=1,\ldots, n-2)$ and $\Phi(0, \alpha) = \g^\alpha_0$,
in the graph notation -- i.e., an $\alpha$-cone, $\alpha\leq n+2$,  on the same base.
\pe\ among other things, has to colour all the edges
connecting new nodes created by \pa\ as apexes of cones based on the face $(0,1,\ldots, n-2)$. By the rules of the game
the only permissible colours would be red. Using this, \pa\ can force a
win, using $n+3$ nodes.
By lemma \ref{Thm:n}, $\Rd_{sc}\A_{n+1, n}\notin
\bold S{\sf Nr}_n\Sc_{n+3}^{\sf ad}$. Since $\A_{n+1, n}$ is finite then $\Rd_{sc}\A_{n+1, n}$
 is not in $\bold S\Nr_n\Sc_{n+3}$.  Else $\Rd_{sc}\A_{n+1}\subseteq \mathfrak{Nr}_n\D$ 
for some $\D$ in $\Sc_{n+3}$ and we can assume that $A$ generates $\D$, so that $\D$ is finite hence completely additive which is impossible. 
Since $\A_{n+1,n}$ embeds into $\Cm\At\A$,
hence $\Rd_{sc}\Cm\At\A=\Cm\Rd_{sc}\At\A$
is outside $\bold S{\sf Nr}_n\Sc_{n+3}$, too. 
By lemma \ref{dfb},  $\Rd_{df}\Cm\At\A\notin {\sf RDf}_n$.
This proves the non--atom canonicity of ${\sf RDf}_n$, 
since plainly $\Rd_{df}\A\in {\sf RDf}_n$, 
and $\Rd_{df}\Cm\At\A=\Cm\At\Rd_{df}\A$.

The algebra $\A\in \RQEA_n\subseteq \bold S\Nr_n\QEA_m\subseteq \bold S\Nr_n{\sf G}_m$, but $\Cm\At\A\notin \bold S\Nr_n{\sf G_m}$ because, 
by lemma \ref{n}, like $\A_{n+1, n}$, $\Cm\At\A$ does not have an $m$--square representation, 
so $\bold S\Nr_n{\sf G}_m$ is not  atom--canonical. 

The ${\sf D}_m$ case is similar: Let $\Rd_{ca}$ denote `cylindric reduct'. 
Now we work 
with $\Rd_{ca}\A_{n+1, n}$ and $\Rd_{ca}\A$. We have $\Rd_{ca}\A\in \RCA_n$, 
hence has an $k$-- dilation in $\RCA_k$ for all $k>n$. Since, for any such $k$, 
$\RCA_k\subseteq {\sf D}_k$, we get that $\Rd_{ca}\A\in \bigcap_{k>n}\bold S\Nr_n{\sf D}_k$.
But $\Rd_{ca}\A_{n+1, n}$ embeds into $\Rd_{ca}\Cm\At\A$ by the same $\QEA$ embedding defined above restricted to 
cylindric reducts.
By observing that $\Rd_{ca}\Cm\At\A =\Cm\At(\Rd_{ca}\A)$, the former algebra is the \de\ completion of $\Rd_{ca}\A$, and using lemma \ref{n} again, it has no
$n+3$--square representation, hence it is outside $\bold S\Nr_n{\sf D}_m$. 

\section{Splitting in cylindric--like algebras; splitting an atom in a representable 
algebra to get a non--representable one}

In blow up and blur constructions, one splits some (possibly all) of the atoms in a finite non representable 
algebra, each into into infinitely many subatoms to get a representable (term) algeba.  In theorem \ref{ANT} all atoms were split each 
into $\omega$-- many subatoms. In theorems 
\ref{b2}, \ref{can} only the red atoms were split, each into $\omega$--many red subatoms.  In 
the rainbow construction for $\CA$s used in theorem \ref{can}, by a red atom we meant (and still do) the equivalence class of 
a surjection $a:n\to \Delta$ where $\Delta$ is a coloured graph in the rainbow signature of $\A_{n+1, n}$ 
(as 
specified in {\it op.cit}) having at least one edge labelled by a red.

Now we do the exact
opposite. We split an atom in a representable algebra to finitely 
many subatoms getting a non--representable algebra whose `small subalgebras' are representable.

For a start fix the dimension to be $\omega$. We outline the idea of splitting atoms for $\CA_{\omega}$.
This method typically shows, 
that for any postive positive $k$, there is an a non--representable algebra having $\CA_{\omega}$ signature, 
such that all its $k$--generated subalgebras are representable, where by a $k$--generated subalgebra we mean 
a subalgebra generated by at most $k$ elements. From such constructions  
relative non--finite axiomatizability concerning number of variables used 
in universal axiomatizations 
can be easily proved.

Fix such $k$. One chooses a certian finite $m\geq k$ suitably so that the 
`combintorial  part' of the proof, to be further elaborated upon below, 
works.
\begin{itemize} 

\item Start with a system $(U_i: i\in \omega)$  of sets wherethe $U_i$s are pariwise disjoint, 
$|U_0|=m-1$ and $|U_i|=m$ for $i\in \omega\sim \{0\}$.

\item  Take the full set algebra with top element $U=\bigcup_{i\in I}U_i$, call it (identifying it with its universe) $\wp(^{\omega}U)$ 
and then take the subalgebra $\A=\Sg^{\wp(^{\omega}U)}(\{R\})$ where $R$ is the $\omega$--ary relation $\prod_{i\in \omega}U_i$.
Then it can be easily checked that $R$ is an atom is in $\A$.

\item Split the atom $R$ into distinct subatoms $(R_i: i<m)$  forming a bigger algebra $\A_k$ having $\CA_{\omega}$ signature in 
which the $R_i$s, for  $i<m$ are (distinct atoms), 
$\sum R_i=R$ and each $R_i$ $(i<m)$ is {\it cylindrically equivalent} to $R$, in the sense 
that ${\sf c}_jR_i={\sf c}_jR$ for all $j<\omega$. 

\item Then $\A_k$ cannot be represenatble, because any represenation 
of $\A$ forces $|U_0|\geq m$ which is impossible. 
However, the $k$--generated subalgebras of $\A_k$ will all be in $\RCA_{\omega}$ 
as illustrated below.

\end{itemize}

We adress now the finite dimensional case for $\QEA$s.
We use splitting methods in \cite{Andreka}.
\begin{theorem}\label{ss} Let $2<n<\omega$. Then $\RQEA_n$ is not axiomatizable by a set of universal formulas containing finitely many variables.
\end{theorem}
\begin{demo}{\bf Idea} \cite{c} Fix a positive $k$ and finite $n>2$. 
For  $\QEA_n$s, in the presence of only finitely many substitution operations, one takes a fairly simple representable algebra, as it happens a set algebra
$\A$ generated by two $\omega$--ary relations $R$ and $F$, where $R$ is an atom. The top element  of $\A$, is like described above, 
it is of the form $^{\omega}U$ such that $U$ is a 
disjoint union of a family $(U_i: i<n)$ of pairwise disjoint finite non--empty sets, $R=\prod_{i<n}U_i=U_0\times \prod_{i\in n\sim \{0\}}U_i$
and $F=\{s\in {}^nU: s_0, s_1\in U_0 \text{ and } s_1=f(s_0)\}$. Here $f:U_0\to U_0$ is a bijection such that the orbits of $f$ has cardinality 
$K<\omega$,  where  $2^{k\times n!}+1\leq K(n-1)$. (The orbits of a function are the equivalence classes resulting from the equivalence relation 
defined on its domain by $x\sim y\iff f(x)=f(y)$.) 
 
Each permuted version of $R$ obtained by applying a substitution operation (corresponding to a transposition $[i, j]$, $i<j<n$) 
denoted by $\s_{[i, j]}$ 
to $R$, renders another atom in $\A$, namely $\s_{[i,j]}R$,  and these permutated versions of $R$ are pairwise disjoint, that is 
for $i,j,k,l\in n $, $\{i, j\}\neq \{k, l\}\implies \s_{[i,j]}R\cap \s_{[k,l]}R=\emptyset$.
Each of these $n^2-n$ atoms is {\it split} into $m$--many (abstract) subatoms or copies, where $m=K(n-1)$ forming a bigger algebra $\A_{k, n}$ having the same
signature as $\QEA_n$. In particular, $R$ is partitioned  into 
a family $(R_i: i\leq m-1)$ of atoms in the bigger algebra $\A_{k, n}(\supseteq \A$), 
so that $R=\sum _{i\leq m-1}^{\A_{k,n}}R_i$, where $m\geq |U_0|+1$. Furthermore in $\A_{k, n}$ we have  
$\s_{[l,j]}R=\sum _{i\leq m-1}^{\A_{k,n}}\s_{[l,j]}R_i$ for all $l<j<n$. 

Though $\A_{k, n}$ has the same signatures as $\QEA_n$, 
it {\it might not} be a $\QEA_n$, but it contains $\A$ as a subalgebra, and its
Boolean reduct is a Boolean algebra. Such subatoms are also {\it cylindrically equivalent} to {\it their original} 
in $\A_{k, n}$, meaning that  ${\sf c}_i^{\A_{k, n}}\s_{\tau}R_j={\sf c}_i^{\A_{k,n}}\s_{\tau}R$ for all $i<n$, 
$j\leq m-1$, and transposition $\tau:n\to n$. Here $\s_{\tau}R$ is the original of its $m$ copies $(\s_{\tau}R_i :i\leq m-1)$. 
This is  typical theme in splitting arguments, saying roughly that cylindrifiers cannot distinguish 
between the original atom and its splitted copies.

The number of elements in $|U_0|$, hence $m(\geq |U_0|+1)$, depends on the fixed in advance $k$ and the number $n^2-n$ of 
the substitution operations ${\sf s}_{[i, j]}$, $i<j<n$ in the signature of $\QEA_n$.
The cylindric reduct of $\A_{k, n}$ will not be representable, because any representation of $\A_{k, n}$ 
forces $|U_0|\geq m$ \cite[pp.178--179]{Andreka}. 
Here  the generator
$F$  plays the essential role. 

But the splitting does not ruin the representability of the ``small" subalgebras  of the thereby obtained non--representable algebra, as long as $m$ is not `too large'.
By a small subalgebra, we mean a {\it subalgebra generated by 
at most $k$ many elements}; here the inequality $2^{k\times n!}+1\leq K(n-1)$ is used.  This excludes universal axiomatizations using $k$ many variables.
Doing this for every positive $k$, excludes universal axiomatizations using finitely many variables.
For a detailed proof the reader is 
referred to \cite{c}.
\end{demo}

Andr\'eka's splitting argument, just outlined, is in fact an ingenious combination of 
Monk--like  constructions with the concept of {\it dilations} in the sense of \cite[Construction 3.6.69]{HMT2}.
Andr\'eka's construction avoids `colouring' (expressed in Monk's construction by an application of Ramsey's theorem.) 
Roughly, in dilations one adds atoms to an atomic algebra, if it is not down right `impossible' to do so, witness
\cite[Last paragraph p. 88]{HMT2}.  
The finite number $|U_0|$ plays the role of the number of colours used in Monk's original construction of bad Monk algebras. 
Here too, in `Andr\'eka's splitting argument' the incompatibility condition between the number of atoms 
$m$ and the number of colours $|U_0|$ $(m>|U_0|)$, leads to an impossibility in case there is 
a representation of $\A_{k, n}$; for the existence of a representation {\it concretely represents the $m$ atoms below $R$} 
forcing 
$|U_0|\geq m$.This method in the finite dimensional case depends on counting the number 
of substitution operations in the signature, so it does 
not work in the infinite dimensional case.  
In the proof for the infinite dimensional one conquers this difficulty by spitting atoms in infinitely many algebras 
obtaining a chain of subreducts of $\QEA_{\omega}$, such that the signature of each
is an expansion of $\CA_{\omega}$ with {\it finitely many} substitution operators, 
and their directed union has the signature
of $\QEA_{\omega}$. 
This answers a question of Andr\'eka's  posed explicitly in \cite[p.193]{Andreka}; we quote:

{\it ``For $n\geq \omega$, the analogous algebras are called representable quasi-polyadic equality algebras, and their class 
is denoted by $RQPEA_n$. We do not know whether Theorem 6 remains true if we drop the condition $n<\omega$ 
and replace $RPEA_n$ 
by $RPQEA_n$ in it.''}

Here `Theorem 6' that Andr\'eka refers to, formulated on \cite[p.193]{Andreka}, 
is the finite version of theorem \ref{s} formulated and proved next. But first a lemma.
The lemma tells us when can we partition a relation $R$ into $m$ `concrete copies'. Such an $R$ will be an atom in a set algebra 
that will be split into $>m$ abstract copies forming a non--representable algebra.
\begin{lemma}\label{split} Let $m$ be a finite cardinal and $\alpha$ be an ordinal $\geq \omega$. Let $(U_i: i\in \alpha)$ be  system of sets each having cardinality $\geq m$, 
and let $U\supseteq  \bigcup \{U_i: i\in \alpha)$. Then there is a partition $(R_j: j<m)$ of $R=\prod_{i\in I}U_i$ such that ${\sf c}_i^UR_j={\sf c}_i^UR$ 
for all $i<\alpha$ and $j<m$.
\end{lemma}
\begin{proof} \cite[Lemma 3]{Andreka}
\end{proof}

We let $\Rd_{ca}$ denote `cylindric reduct'.

\begin{theorem}\label{s}  The variety $\RQEA_{\omega}$ cannot be axiomatized by a set of universal formulas containing finitely many variables
over $\sf RQA_{\omega}$.
\end{theorem}
\begin{proof} We give a fairly complete sketch omitting some minor details. 
The proof is divided into five parts. 
We may identify notationally set algebras with their universe. 

Throughout the proof fix a positive $k$. 

{\bf (1) Splitting an atom in a set algebra getting a non--representable algebra:} 
This part is similar to the finite dimensional case addressed in 
the proof of the previous theorem \ref{ss}. The difference is that here the set algebra we start off with is simpler; it is generated by a single $\omega$--ary relation
and now we have all of the $\CA_{\omega}$ operations. 
But the {\it substitution operators, like the finite dimensional case are finite}, so that in our combinatorial
arguments used, we can count them.
 
For fixed $2<n<\omega$, take $m\geq 2^{k\times n!+1}$. Suppose that the signature consists of 
$\omega$--many cylindrifier, ${\sf c}_i: i<\omega$,  diagonal constants ${\sf d}_{ij}$,$i<j<\omega$, and $n^2-n$ substitutions $\{\s_{[i, j]} :i<j<n\}$.
Like forming $\A_{k, n}$ in the finite dimensional case, one forms an algebra $\B_{k, n}$ by splitting the $\omega$--ary relation 
$R=\prod_{i\in \omega}U_i$ with $U_0=m-1$ and $|U_i|=m$ for $0<i<\omega$ in the algebra 
$\A_n=\Sg^{\wp(^{\omega}U)}\{R\}$, where $U=\bigcup_{i\in I} U_i$,  into $m$ abstract copies.  Observe that here $R$ depends on $n$, because $m$ depends on $n$ and 
$R$ depends on $U_0=m-1$. Nevertheless, we prefer to use the notation $R$ rather than the more cumbersome notation $R_n$ or $R^n$. 

The resulting $algebra \B_{k, n}$ has signature expanding $\CA_{\omega}$ by the finitely many substitution operators 
$\s_{[i,j]}$, $i<j<n$.  Here ${\wp(^{\omega}U)}$ is taken in the specified signature
with operations interpreted the usual way as in set algebras, e.g. ${\sf S}_{[0, 1]}\{R\}=\{s\in {}^{\omega}U: (s_1, s_0)\in R\}$.
As in the finite dimensional case addressed in the proof of theorem \ref{ss}, it can be easily checked that for all $i<j<n$, ${\sf S}_{[i,j]}R$ is an atom in $\A_n$. In particular, $R$ is partitioned  into 
a family $(R_i: i\leq m-1)$ of atoms in the bigger algebra $\B_{k, n}(\supseteq \A_n$), 
so that $R=\sum _{i< m}^{\B_{k,n}}R_i$, where $m=|U_0|+1$. Furthermore in $\B_{k, n}$ we have  
$\s_{[l,j]}R=\sum _{i<m}^{\B_{k,n}}\s_{[l,j]}R_i$ and 
 ${\sf c}_t^{\A_{k, n}}\s_{[l, j]}R_i={\sf c}_t^{\A_{k,n}}\s_{[l, j]}R$ for all $l,j<n$, $i<m$ 
and $t<\omega$. 
 
Here {\it we do not need the function $F$ among the generators} used in the finite dimensional case, so in a way this part of the
proof is 
simpler.

Then $\Rd_{ca}\B_{k, n}$ will not be representable for the following reasoning: 
One defines the term $\tau(x)= (\bigwedge_{i<m}{\sf s}_i^0{\sf c}_1\ldots {\sf c}_mx\cdot \bigwedge_{i<j<n}-{\sf d}_{ij})$ as in \cite[Top of p.157]{Andreka}.
Then $\A_n\models \tau(R)=0$ hence $\B_{k, n}\models \tau(R)=0$ because $\A\subseteq \B_{k, n}$.
But any representation of $\B_{k, n}$, $h$ say, such that $h(R_n)\neq \emptyset$ forces that $\tau(h(R))\neq 0$ which is impossible.
The underlying idea here is that in case of the existence of such an $h$, then 
the $m$ atoms below $R$ {\it in the presence of diagonal elements} 
used in defining the term $\tau$, forces $U_0$ 
to have $m$ elements. 

This technique does not work in the finite dimensional case, where we had {\it only finitely many diagonal elements}.
This  was the reason why we added the function $F$ to the set of generator of $\A_n$ as 
specifed in the proof of theorem \ref{ss}; to implement `the counting' 
leading to an impossibility.

{\bf (2) Representability of the $k$--generated subalgebras:} Now we show that the $k$ generated subalgebras are representable. 
Let  $G\subseteq \B_{k, n}$, $|G|\leq k$. Let $\mathfrak{P}=(R_{l}:  l<m)$ be the abstract partition of $R$ in the bigger algebra $\B_{k, n}$ obtained by splitting $R$ in $\A_n$ 
into $m$ (abstract) atoms
$(R_{l}: l<m)$.  
One defines the following relation on $\mathfrak{P}$: 
For $l, t<m$, $R_{l}\sim R_{t}\iff (\forall g\in G)(\forall i, j<n)({\sf s}_{[i, j]}R_{l}\leq g\iff {\sf s}_{[i,j]}R_{t}\leq g)$.
Then it is straightforward to check that $\sim$ is an equivalence relation on $\mathfrak{P}$
having $p<m$ many equivalence classes, because $|G|\leq k$, 
$n^2-n<n!$ and (recall that) 
$m\geq 2^{k\times n!+1}$. 

One next takes $B=\{a\in B_{k,n}: (\forall l,t< m)(\forall i, j\in n)(R_{l}\sim R_{t},
{\sf s}_{[i, j]}R_{l}\leq a\implies {\sf s}_{[i, j]}R_{t}\leq a)\}$, then $G\subseteq B$,  $R\in B$, and $B$ is closed under the operations, so that $\A_n\subseteq \B\subseteq \B_{k, n}$, where
$\B$ is the algebra with universe $B$.
Furthermore, $\B$ is the smallest such subalgebra of $\B_{k, n}$, where for each $i, j<n$, ${\sf s}_{[i, j]}R$ is partitioned into $p<m$ many parts cylindrically equivalent to 
${\sf s}_{[i,j]}R$.  The non--representability of the algebra $\B_{k, n}$ can be pinned down to the existence of `one more extra atom'. 
(Here we use the notation $\s_{[i, j]}$ {\it and  not ${\sf S}_{[i, j]}$ $(i<j<n)$}, because we still do not know that $\B$ is representable).

Identifying set algebras with their domain, for an algebra $\A$ and a non--zero $a\in \A$, we say that 
a representation $h:\A\to \wp({^{\omega}U})$ {\it respects the  non--zero element $a$}
if $h(a)\neq \emptyset$. 
Using that $|\mathfrak{P}|=m$, we showed that a representation of $h$ of $\B_{k, n}$ that respects $R$, has to respect the atoms below it,
and this forces that $|U_0|\geq m$, which contradicts the construction of $\A_n$. As indicated above, the presence of diagonal elements is essential to do the counting of the elements
in $U_0$. 

But this cannot happen with $\B$, because $p<m$ (by the condition $|G|\leq k)$), so that this `one more extra atom and possibly more' vanish in $\B$. 
Representing $\B$ is done by embedding it into a representable algebra $\C$
having the same top element as $\A_n$, namely, $^{\omega}U$, where $R\in \C$ is partitioned  {\it concretely into $m-1$ real atoms}, 
that is, there exists $R_l\subseteq {}^{\omega}U$, $l<m-1$ {\it real atoms} in $\C$ such that 
for all $i<j<n$, ${\sf S}_{[i, j]}R= {\sf S}_{[i, j]}\bigcup_{l<m-1}R_l= \bigcup_{l<m-1} {\sf S}_{[i, j]}R_l$ and ${\sf C}_iR_l={\sf C}_iR$ 
for all $l<m-1$ and $i<\omega$. This concrete partition exists by lemma \ref{split}  because $|U_0|=m-1$
and by the condition $|G|\leq k$, the value of $p$ (depending on $G$) cannot exceed $m-1$. The largest possible value of $p$ is $m-1$,
and if $p=m-1$,  then $\B\cong \C$.

{\bf (3) Forming a $\QEA_{\omega}$ as a limit; taking the directed union:} One does this for each $2<n<\omega$.
 The constructed non--representable algebras 
form a chain; for $2<n_1<n_2$, $\B_{k, n_1}\subseteq \Rd\B_{k, n_2}$, where the last algebra is the reduct obtained from 
$\B_{k, n_2}$ by discarding substitution operations not in the signature of $\B_{k, n_1}$, 
that is the substitution operations ${\sf s}_{[i,j]}: i, j\geq n_1, i\neq j$. 
Then one takes the 
directed union  $\B_k=\bigcup_{n\in \omega\sim 3} \B_{k, n}$ having the signature of $\QEA_{\omega}$.
The cylindric reduct of $\B_k$ is not representable because the cylindric reduct of every $\B_{k, n}$ is not representable.
But the $k$--generated subalgebras of $\B_k$ are representable.
Indeed, let $|G|\leq k$. Then $G\subseteq \B_{k,n}$ for some finite $n>2$. If $\Sg^{\A_{k}}G$ is not representable then there exists 
$l\geq n$ such that $G\subseteq \B_{k,l}$ and 
$\Sg^{\B_{k,l}}G$ is not representable, which is a contradiction.

One can construct such algebras $\B_k$ having the signature of $\QEA_{\omega}$ for each positive $k$. 
This excludes existence of any universal axiomatization of $\RQEA_{\omega}$ using
finitely many 
variables \cite{Andreka, ST}. 

Worthy of note is that in \cite{ST} the proof of the last result is sketched, but in this sketch only one splitting is done. 
An atom in an $\sf RQEA_{\omega}$ is split into $\omega$ many atoms. It is clear that the resulting algebra is not representable, 
but it is not clear why its $k$--generated subalgebras
should be representable. 
So here instead of doing just one splitting, we perform `infinitely many' as illustrated above.

{\bf (4) The representability of the diagonal free reduct:}
But one can even go further, by showing that the diagonal free reduct is in 
${\sf RQA}_{\omega}$ giving the required relative non--finite axiomatizability result by reasoning as follows. 
Refinements of this idea 
gives the stronger results of relative non--finite axiomatizabiliy. For fixed $k$ and $n\in \omega\sim 3$, 
when we discard the diagonal elements, the diagonal free reducts of the non--representable algebras 
$\B_{k, n}$ described above {\it are representable}. We give the general idea first. The $\B_{k, n}$ is not representable because $m=|U_0|+1$. So now one adds 
`one extra element or more' to 
$|U_0|$ forming $W_0$ to be able to represent the abstract partition of $R$ including the `one extra atom' that was responsible for non--representabitly of $\A_{k, n}$ when 
$U_0$ did not have `enough elements'; recall that $U_0$ had $m-1$ elements. 
The diagonal free reduct of $\A_{k, n}$ can now be represented by a set algebra obtained by splitting an $\omega$--ary 
relation $R=W_0\times \prod_{i\in \omega} U_i$
where $|W_0|\geq m$ and $|U_i|=m+1$, $i\in \omega$, in a set algebra generated by $R$,  
into $m$ real atoms,   Here, in the {\it absence of diagonal elements},  we cannot count the elements in $|W_0|$, 
so adding this element to $U_0$ does not clash with the concrete interpretation of the 
other operations. 

Now let us give the details. For the sake of brevity let $G_n$ denote the set of transpositions on $n$. 
Let $U=\bigcup_{i\in \omega}U_i$, where $(U_i: i\in \omega)$ is a family of pairwise disjoint sets such that 
$U_0=m-1$, $m-1<|U_i|<\omega$, $R=\{s\in {}^{\omega}U: s_i\in U_i\}$ and $\B_{k, n}$ be the algebra obtained by
splitting $R$ in $\Sg^{\wp(^{\omega}U)}\{R\}$ into $m$ atoms $(R_j: j< m)$ be as before.
Let $W$ be a proper superset of $U$, that is $U\subsetneq W$.  
Let $W_0=U_0\cup (W\sim U)$, and $W_i=U_i$ for $0<i<\omega$. 
Let $t: W\to U$ be a surjective function which is the identity on $U$ and which maps $W_0$ to $U_0$. Define $g:{}^{\omega}W\to {}^{\omega}U$ 
by $g(s)=t\circ s$ for all $s\in {}^{\omega}W$ and for  $x\subseteq {}^{\omega}U,$  define
$h(x)=\{s\in {}^{\omega}U: g(s)\in x\}.$
Then $h: \wp(^{\omega}U)\to \wp(^{\omega}W)$ respects all quasi--polyadic equality operations except possibly diagonal elements.
Furthermore, $h(R)=\prod_{i<\omega}W_i$.

Since $|W_i|\geq m$ for all $i<\omega$,  then by lemma \ref{split} there is a partition $(S_j: j<m)$ 
of $S=\prod_{i<\omega}W_i$ into $m$ parts, such that ${\sf c}_iS_j={\sf c}_iS$ for all $ i<\omega$ and $j< m$.
Then $({\sf s}_{\sigma}S_j: j< m)$ is an analogous partition of ${\sf s}_{\sigma}S$ for $\sigma\in G_n$.
For $\sigma\in G_n$ and $j<m$,  let $X_{\sigma,j}={\sf s}_{\sigma}^{\B_{k,n}}R_j$.
To define the required representation $\bar{h}:\B_{k,n}\to \wp(^{\omega}W)$, it suffices to define it on $\A$ and $X_{\sigma, j}$ for each $\sigma\in G_n$ and $j<m$, 
since every element of $\B_{k, n}$ is a finite union of such elements: 
$$\bar{h}(a)=a, \ \ a\in \A_n,$$
$$\bar{h}(X_{\sigma j})={\sf s}_{\sigma}S_j, \sigma \in G_n, j< m,$$
and
$$\bar{h}(x+y)=\bar{h}(x)+\bar{h}(y), x,y\in \B_{k,n}.$$
Then $\bar{h}$ is the required represenation \cite[p.194]{Andreka}.\\

{\bf (5) Relative non--finite axiomatizability; the required result:}
Now we show that any universal axiomatization of $\RQEA_{\omega}$, 
must contain a formula with more than $k$ variables and containing at least one diagonal constant getting the required relative non--finite axiomatizability result. 
Fix $n\in \omega\sim 3$. Let $\Sigma_n^v$ be the set of universal formulas using only $n$ substitutions and $k$ variables valid in $\RQEA_{\omega}$, 
and let $\Sigma_n^d$ be the set of universal formulas
using only $n$ substitutions and no diagonal elements valid in $\RQEA_{\omega}$.  By $n$ substitutions we understand the set 
$\{{\sf s}_{[i,j]}: i,j\in n\}.$
Then $\B_{k,n}\models \Sigma_n^v\cup \Sigma_n^d$. $\B_{k,n}\models \Sigma_n^v$ because the $k$ generated subalgebras
of $\B_{k,n}$ are representable, while $\B_{k,n}\models \Sigma_n^d$ because $\A_{k,n}$ has a representation that preserves all operations except
for diagonal elements.  Indeed, let $\phi\in \Sigma_n^d$, then there is a representation of $\B_{k,n}$ in which all operations 
are the natural ones except for the diagonal elements. 
This means that (after discarding the diagonal elements) there is a injective homomorphism 
$h:\A^d\to \P^d$ where $\A^d=\langle B_{k,n}, +, \cdot , {\sf c}_k, {\sf s} _i^j, {\sf s}_{[i,j]}\rangle_{k\in \omega, i,j\in n}\text { and } 
\P^d=\langle {\cal B}({}^{\omega}W), {\sf C}_k^W, {\sf S}_i^j{}^W, {\sf S}_{[i,j]}^W \rangle_{k\in \omega, i,j\in n},$ 
for some infinite set $W$. 

Now let $\P=\langle{\cal B}(^{\omega}W), {\sf C}_k^W, {\sf S}_i ^j{}^W, {\sf S}_{[i,j]}^W, {\sf D}_{kl}^W\rangle_{k,l\in \omega, i,j\in n}.$
Then we have that $\P\models \phi$ because $\phi$ is valid 
and so  $\P^d\models \phi$ due to the fact that  no diagonal elements  occur in $\phi$. 
It thus follows that $\A^d\models \phi$, because $\A^d$ is isomorphic to a subalgebra of $\P^d$ and $\phi$ is quantifier free. Therefore 
$\B_{k,n}\models \phi$.

Let $\Sigma^v=\bigcup_{n\in \omega\sim 3}\Sigma_n^v 
\text { and }\Sigma^d=\bigcup_{n\in \omega\sim 3}\Sigma_n^d.$ 
It follows that  $\B_k\models \Sigma^v\cup \Sigma^d.$ For if not, then there exists a quantifier free  formula 
$\phi(x_1,\ldots, x_m)\in \Sigma^v\cup \Sigma^d$,
and $b_1,\ldots, b_m$ such that $\phi[b_1,\ldots, b_n]$ does not hold in $\A_k$. We have $b_1,\ldots, b_m\in \B_{k,i}$ for some $2<i<\omega$. 
Take $n$ large enough $\geq i$ so that
$\phi\in \Sigma_n^v\cup \Sigma_n^d$.   
Then $\phi$ does not hold in $\B_{k,n}$, which is 
a contradiction.
Let $\Sigma$ be  a set of quantifier free formulas axiomatizing  $\RQEA_{\omega}$, then $\A_k$ does not model $\Sigma$ since $\A_k$ is not 
representable, so there exists a formula $\phi\in \Sigma$ such that
$\phi\notin \Sigma^v\cup \Sigma^d.$ 
Then $\phi$ contains more than $k$ variables and a diagonal constant occurs in $\phi$.
\end{proof}

For fixed positive $k$, various other reducts of $\B_{k, n}$ 
can be proved to be representable, for each $2<n<\omega$. 
For example, there are representations that preserve all operations except for infinitely many cylindrifiers, representations that preserve all
operations except the Boolean $\cap$ and $\cup$, and representations that preserve all operations except for finitely 
many substitution operations and diagonal elements having a common pre-assigned index in $\omega$.   
Using several variations on this theme, 
it can be shown, without much difficulty,  that all complexity results in \cite{Andreka} generalize to $\RQEA_{\omega}$ by combining the
method of `infinite splitting' given above  together with the techniques used 
in  \cite{c}. Using transfinite induction such results lift to an arbitrary infinite ordinal $\alpha$, one of which is the following result. 
If $\Sigma$ is any universal axiomatization of $\RQEA_{\alpha}$, $l,k, k' <\alpha$, then $\Sigma$ 
contains infinitely formulas in which one of the Boolean join or Boolean meet 
occurs,  a diagonal or a substitution operation with an index $l$ occurs, more
than $k'$ cylindrifications, and more  than $k$ variables occur. Even stronger results can 
be  obtained if $\Sigma$ is an equational 
axiomatization.

\subsection{Splitting  the atoms in a finite algebras twice 
getting atomic algebras $\A$ and $\B$ such that $\A\equiv \B$, $\A\in \bold K$, $\B\notin \bold K$, for some class $\bold K$.}

Fix $1<n<\omega$ and $\K$ any class between $\Sc$ and $\QEA$. 
Here we use the same tecnique to show that the classes $\Ra\CA_m$ for $5\leq m$ 
and $\Nr_n\K_k$, for $k\geq 1$ are not elementary. 
The idea for both cylindric and relation algebras is the same given briefly in the tiltle taking $\bold K=\Nr_n\K_n$ for cylindric--like algebras and
$\bold K=\Ra\CA_5$ for relation algebras. 

\begin{itemize}

\item Start with a finite algebra $\F$. Split the atoms twice geting two distinct algebras $\A$ and $\B$. 
The algebra $\A$ 
is obtained from $\F$ by splitting each of its atoms to infinitely many, getting finitely many {\it infinite atomic `components'} 
each having the same uncountable cardinality. 
This symmetry makes $\A$ in $\Nr_n\CA_{\omega}$ in the $\CA$ case, and makes it in $\Ra\CA_{\omega}$
in the relation algebra case.

\item On the other hand, the algebra $\B$ is obtained from $\F$ by splitting each of its atoms  (also) to infinitely many, but now getting finitely many infinite atomic components; 
for which the  cardinalities of the atoms in different components {\it are not the same.}
This assymmetry makes $\B$ outside $\bold K_{ca}=\Nr_n\CA_k$ (some finite $k$) in the $\CA$ case, and outside $\bold K_{ra}=\Ra\CA_m$ (some finite $m$)
in the $\RA$ case. 

\item But $L_{\infty, \omega}$, {\it a fortiori} $L_{\omega, \omega}$ 
will miss out on this infinite cardinality twist, so that $\A$ and $\B$ will 
be elementary equivalent. 

\item It readily follows that $\bold K_{ca}$ and $\bold K_{ra}$ are not elementary.

\end{itemize}
 
{\bf Cylindric algebra case:} We modify the construction in \cite[Lemma 5.4.1, Theorem 5.4.2]{Sayedneat} that addresses only $\CA$s to obtain the required results.
First we make the algebras $\A$ and $\B$ dealt with in the above cited theorem atomic (they are not atomic as they are), 
we count in dimension two, and we give a unified proof that works for any $\K$ between $\Sc$ and $\QEA$. 
From our new construction, we get the stronger result that the modified $\A$ and $\B$ are not only elementary equivalent;
below we show that $\A\equiv_{\infty, \omega}\B$. The proof in \cite[Theorem 5.4.2]{Sayedneat}  does not give this stronger result
which will be achieved by playing an \ef\ game on atom structures.

Let $L$ be a signature consisting of the unary relation
symbols $P_0,P_1,\ldots, P_{n-1}$ and
uncountably many $n$--ary predicate symbols. $\Mo$ is as in \cite[Lemma 5.1.3]{Sayedneat}, but the tenary relations are replaced by 
$n$--ary ones, and we require that the interpretations of the $n$--ary relations in $\Mo$ 
are {\it pairwise disjoint} not only distinct. This can be fixed. 
In addition to pairwise
disjointness of $n$--ary relations, we require their symmetry, 
that is, permuting the variables does not change
their semantics.

For $u\in {}^n n$, let $\chi_u$
be the formula $\bigwedge_{u\in {}^n n}  P_{u_i}(x_i)$. We assume that the $n$--ary relation symbols are indexed by (an uncountable set) $I$
and that  there is a binary operation $+$ on $I$, such that $(I, +)$ is an abelian group, and for distinct $i\neq j\in I$,
we have $R_i\circ R_j=R_{i+j}$. 
For $n\leq k\leq \omega$, let $\A_k=\{\phi^{\Mo}: \phi\in L_k\}(\subseteq \wp(^k\Mo))$,
where $\phi$ is taken in the signature $L$, and $\phi^{\Mo}=\{s\in {}^k\Mo: \Mo\models \phi[s]\}$. 

Let $\A=\A_n$, then $\A\in \sf Pes_n$ by the added symmetry condition.
Also $\A\cong \Nr_n\A_{\omega}$; the isomorphism is given by
$\phi^{\Mo}\mapsto \phi^{\Mo}$. The map is obviously an injective homomorphism; it is surjective, because $\Mo$ 
(as stipulated in \cite[ item (1) of lemma 5.1.3]{Sayedneat}), 
has quantifier elimination.

For $u\in {}^nn$, let $\A_u=\{x\in \A: x\leq \chi_u^{\Mo}\}.$ Then
$\A_u$ is an uncountable and atomic Boolean algebra (atomicity follows from the new disjointness condition)
and $\A_u\cong {\sf Cof}(|I|)$, the finite--cofinite Boolean algebra on $|I|$.
Define a map $f: \Bl\A\to \bold P_{u\in {}^nn}\A_u$, by
$f(a)=\langle a\cdot \chi_u\rangle_{u\in{}^nn+1}.$

Let $\P$ denote the
structure for the signature of Boolean algebras expanded
by constant symbols $1_u$, $u\in {}^nn$, ${\sf d}_{ij}$, and unary relation symbols
${\sf s}_{[i,j]}$ for each $i,j\in n$.
Then for each $i<j<n$, there are quantifier free formulas
$\eta_i(x,y)$ and $\eta_{ij}(x,y)$ such that
$\P\models \eta_i(f(a), b)\iff b=f({\sf c}_i^{\A}a),$
and $\P\models \eta_{ij}(f(a), b)\iff b=f({\sf s}_{[i,j]}a).$

The one corresponding to cylindrifiers is exactly like the $\CA$ case \cite[pp.113-114]{Sayedneat}.
For substitutions corresponding to
transpositions, it is simply $y={\sf s}_{[i,j]}x.$  The diagonal elements and the Boolean operations are easy to interpret. 
Hence, $\P$ is interpretable in $\A$, and the interpretation is one dimensional and
quantifier free. For $v\in {}^nn$, by the Tarski--Sk\"olem downward theorem, 
let  $\B_v$ be a countable elementary subalgebra of $\A_v$. (Here we are using the countable signature of $\PEA_n$).
Let $S_n (\subseteq {}^nn)$ be the set of permuations in $^nn$.

Take $u_1=(0, 1, 0, \ldots, 0)$ and $u_2=(1, 0, 0, \ldots, 0)\in {}^nn$. 
Let  $v=\tau(u_1,u_2)$  where $\tau(x,y)={\sf c}_1({\sf c}_0x\cdot {\sf s}_1^0{\sf c}_1y)\cdot {\sf c}_1x\cdot {\sf c}_0y$. We call $\tau$ an approximate
witness. It is not hard to show that $\tau(u_1, u_2)$ is actually the composition of $u_1$ and $u_2$, 
so that $\tau(u_1, u_2)$ is the constant zero map; which we denote by $\bold 0$; it is also in $^nn$. 
Clearly for every $i<j<n$,  ${\sf s}_{[i,j]} {}^{^nn}\{\bold 0\}=\bold 0\notin \{u_1, u_2\}$.

We can assume without loss that 
the Boolean reduct of $\A$ is the following product:
$$\A_{u_1}\times \A_{u_2}\times  \A_{\bold 0}\times\bold P_{u\in V\sim J} \A_u,$$
where $J=\{u_1, u_2, \bold 0\}$.
Let $$\B=((\A_{u_1}\times \A_{u_2}\times \B_{\bold 0}\times\bold P_{u\in V\sim J} \A_u), 1_u, {\sf d}_{ij}, {\sf s}_{[i,j]}x)_{i,j<n},$$
recall that  $\B_{\bold 0}\prec \A_{\bold 0}$ and $|\B_{\bold 0}|=\omega$,
inheriting the same interpretation.  Then by the Feferman--Vaught theorem,
we get that
$\B\equiv \A$.
Now assume for contradiction, that $\Rd_{sc}\B=\Nr_n\D,$ with $\D\in \Sc_{n+1}$.
Let $\tau_n(x,y)$, which we call an {\it $n$--witness}, be defined by ${\sf c}_n({\sf s}_n^1{\sf c}_nx\cdot {\sf s}_n^0{\sf c}_ny).$
By a straightforward, but possibly tedious computation, one can obtain  $\Sc_{n+1}\models \tau_n(x,y)\leq \tau(x,y)$ 
so that the approximate witness {\it dominates} the $n$--witness.

The term $\tau(x,y)$ does not use any spare dimensions, and it `approximates' the term $\tau_n(x,y)$ that
uses the spare dimension $n$. The algebra $\A$ can be viewed as splitting the atoms of the atom structure $(^nn, \equiv_, \equiv_{ij}, D_{ij})_{i,j<n}$ each to uncountably many atoms.
On the other hand, $\B$ can be viewed as splitting the same atom structure, each  atom -- except for one atom that is split into countably many atoms -- 
is also split into uncountably many atoms (the same as in $\A)$. 

On the `global' level, namely, in the complex algebra of the finite
(splitted) atom structure $^nn$, these two terms are equal, the approximate witness
is the $n$--witness. The complex algebra $\Cm{}(^{n}n)$ does not `see' the $n$th dimension.
But in the algebras $\A$ and $\B$, obtained after splitting,  the $n$--witness becomes then a {\it genuine witness},  not an approximate one.  The approximate witness 
{\it strictly dominates} the $n$--witness.  The $n$--witness using the spare dimension $n$, detects the cardinality twist that $L_{\infty, \omega}$, {\it a priori}, 
first order logic misses out on.
If the $n$--witness were term definable (in the case we have a full neat reduct of an algebra in only one extra dimension), then
it takes two uncountable component to an uncountable one,
and this is not possible for $\B$, because in $\B$, the target 
component  is forced to be 
countable. 

Now for $x\in \B_{u_1}$ and  $y\in \B_{u_2}$, we have
$$\tau_n^{\D}(x, y)\leq \tau_n^{\D}(\chi_{u_1}, \chi_{u_2})\leq \tau^{\D}(\chi_{u_1}, \chi_{u_2})=\chi_{\tau^{\wp(^nn)}}(u_1,u_2)=\chi_{\tau(u_1, u_2)}=\chi_{\bold 0}.$$
But for $i\neq j\in I$,
$\tau_n^{\D}(R_i^{\sf M}\cdot \chi_{u_1}, R_j^{\sf M}\cdot \chi_{u_2})=R_{i+j}^{\sf M}\cdot \chi_v$, and so    $\B_{\bold 0}$ will be uncountable,
which is impossible.

We show that \pe\ has a \ws\ in an \ef-game over $(\A, \B)$ concluding that $\A\equiv_{\infty}\B$.
At any stage of the game,
if \pa\ places a pebble on one of
$\A$ or $\B$, \pe\ must place a matching pebble,  on the other
algebra.  Let $\b a = \la{a_0, a_1, \ldots, a_{n-1}}$ be the position
of the pebbles played so far (by either player) on $\A$ and let $\b
b = \la{b_0, \ldots, b_{n-1}}$ be the the position of the pebbles played
on $\B$.  \pe\ maintains the following properties throughout the
game.
\begin{itemize}
\item For any atom $x$ (of either algebra) with
$x\cdot \bold 1_{0}=0$ then $x \in a_i\iff x\in b_i$.
\item $\b a$ induces a finite partion of $\r(0)$ in $\A$ of $2^n$
 (possibly empty) parts $p_i:i<2^n$ and $\b b$ induces a partition of
 $\bold 1_{0}$ in $\B$ of parts $q_i:i<2^n$.  $p_i$ is finite iff $q_i$ is
 finite and, in this case, $|p_i|=|q_i|$.
\end{itemize}

It is easy to see that \pe\ can maintain the two properties in every round. In this back--and--forth game, \pe\ will always find a matching pebble, 
because the pebbles in play are finite.  For each $w\in {}^nn$ 
the component $\B_w=\{x\in \B: x\leq \bold 1_v\}(\subseteq \A_w=\{x\in \A: x\leq \bold 1_v\}$) 
contains infinitely many atoms. 
For any $w\in V$, $|\At\A_w|=|I|$, while 
for  $u\in V\sim \{\bold 0\}$, $\At\A_u=\At\B_u$. For 
$|\At\B_{\bold 0}|=\omega$, but it is still an infinite set.
Therefore $\A\equiv_{\infty}\B$. 

In words: if \pa\ places a pebble on an atom below $\bold 1_{\bold 0}$ of one structure then \pe\ places
her matching pebble on some atom below $\bold 1_{\bold 0}$  of the other structure
so that the new pebble is equal to an old pebble
iff \pa's pebble is equal to the matching old pebble and it is distinct
from all the old pebbles if \pa's pebble is.  Since there are only
finitely many pebbles in play and both $\A$ and $\B$ have
infinitely many atoms below $\bold 1_{\bold 0}$  she can do this.\\
\end{proof}

{\bf Relation algebra case:} Now we give an example of splitting atoms in a Monk--like relation algebra proving that $\Ra\CA_n$, for $n\geq 5$
and closely related classes are not closed under 
$\equiv_{\infty, \omega}$.
We start by defining certain (intergral) relation algebras. The integral relation algebra (in which $\sf Id$ is an atom) defined next
by listing its forbidden triples.

\begin{example}\label{ra}
Take $\R$ to be a symmetric, atomic relation algebra with atoms
$$\Id, \r(i),
\y(i), \bb(i):i<\omega.$$
Non-identity atoms have colours, $\r$ is red,
$\bb$ is blue, and $\y$ is yellow. All atoms are self-converse.
Composition of atoms is defined
by listing the forbidden triples.
The forbidden triples are (Peircean transforms)
or permutations of $(\Id, x, y)$ for $x\neq y$, and
$$(\r(i), \r(i), \r(j)), \; (\y(i), \y(i), \y(j)), \; (\bb(i), \bb(i), \bb(j))\; \; i\leq j < \omega$$
$\R$ is the complex algebra over this atom structure.

Let $\alpha$ be an ordinal.  $\R^\alpha$ is obtained from $\R$ by
splitting the atom $\r(0)$ into $\alpha$ parts $\r^k(0):k<\alpha$
and then taking the full complex algebra.
In more detail, we put red atoms $r^{k}(0)$ for $k<\alpha.$
In the altered algebra the forbidden triples are
$(\y(i), \y(i), \y(j)), (\bb(i), \bb(i), \bb(j)), \ \   i\leq j<\omega,$
$(\r(i), \r(i), \r(j)),  \ \  0<i\leq j<\omega,$
$(\r^k(0), \r^l(0), \r(j)), \ \   0<j<\omega, k,l<\alpha,$\\
$(\r^k(0), \r^l(0), \r^m(0)), \ \  k,l,m<\alpha.$
These algebras were used in \cite{bsl} to show that $\Ra\CA_k$ for all $k\geq 5$ is not elementary.
\end{example}
 
\begin{theorem} \label{thm:el} Let $\mathfrak{n}\geq 2^{\aleph_0}$. 
Let $\A=\R^{\mathfrak{n}}$ and $\B=\R^{\omega}$ (with notation as in example \ref{ra}). Then $\A\equiv_{\infty, \omega}\B$, $\B\in \Ra\CA_{\omega}$, $\A\notin \Ra\CA_5$ and 
$\A$ does not have a complete representation. In particular,  
$\sf CRRA$, and any class $\bold K$  such that $\Ra\CA_{\omega}\subseteq \bold K\subseteq \Ra\CA_5$, are 
not closed under $\equiv_{\infty, \omega}$.
\end{theorem}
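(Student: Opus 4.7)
The statement packages four claims plus a corollary; I would verify them in the order $\B \in \Ra\CA_\omega$, then $\A \notin \Ra\CA_5$ (from which the non--complete--representability of $\A$ will follow), then $\A \equiv_{\infty,\omega} \B$, and finally combine them to obtain the non--closure statement. For $\B = \R^\omega \in \Ra\CA_\omega$ I use a standard Monk--style enumeration: since the forbidden triples of $\B$ involve only the three colour classes ($\r, \y, \bb$), with the splitting of $\r(0)$ merely adding countably many reds of index $0$, a straightforward recursive labelling of the pairs from a countable base produces a complete representation of $\B$. A complete representation on a countable base automatically gives an $\omega$--dimensional hyperbasis, whence $\B \in \Ra\CA_\omega$ and moreover $\B \in \sf CRRA$.

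The crucial step is $\A \notin \Ra\CA_5$. Supposing to the contrary that $\A \subseteq \Ra\C$ for some $\C \in \CA_5$, one obtains a $5$--dimensional hyperbasis for $\A^+$. The forbidden triples $(\r^k(0), \r^l(0), \r^m(0))$ rule out every triangle labelled entirely by split red--$0$ atoms. A Ramsey--style pigeon--hole count on the hypernetworks in this hyperbasis, where the only ``separating labels'' for pairs of distinct split reds come from the countable pool of non--split atoms, forces the index set $\{\r^k(0) : k < \mathfrak{n}\}$ to have cardinality strictly below $2^{\aleph_0}$, contradicting $\mathfrak{n} \geq 2^{\aleph_0}$. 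Since every complete representation produces an $\omega$--dimensional hyperbasis, $\sf CRRA \subseteq \Ra\CA_\omega \subseteq \Ra\CA_5$; therefore $\A$ has no complete representation either.

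For $\A \equiv_{\infty,\omega} \B$ I exhibit a winning strategy for $\exists$ in the finite--pebble back--and--forth game of arbitrary length on the atom structures $\At\A$ and $\At\B$. These atom structures agree on all atoms other than the split reds $\{\r^k(0)\}$ and differ only in the cardinality of this family ($\mathfrak{n}$ in $\A$ versus $\omega$ in $\B$). Since the composition table is invariant under any permutation of the index $k$ in $\r^k(0)$, $\exists$ maintains a finite partial isomorphism which is the identity on all non--split atoms and an injective matching between pebbled split reds; both index sets being infinite, the matching extends round after round. The ``in particular'' consequence is then immediate: any class $\bold K$ with $\Ra\CA_\omega \subseteq \bold K \subseteq \Ra\CA_5$ satisfies $\B \in \bold K \not\ni \A$, and the same holds for $\sf CRRA$ using $\B \in \sf CRRA$ and $\A \notin \sf CRRA$, so none of these classes is closed under $\equiv_{\infty,\omega}$.

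The main obstacle is plainly the non--membership $\A \notin \Ra\CA_5$: one must translate the forbidden--triple constraint on $\r^k(0)$'s into a hard cardinal bound on the red--$0$ index set admissible in any $5$--dimensional hyperbasis. The other three parts are essentially routine --- a direct Monk--style representation for $\B$, the standard hyperbasis characterisation giving $\sf CRRA \subseteq \Ra\CA_5$, and a standard back--and--forth argument exploiting that both sides carry an infinite index set of split reds.
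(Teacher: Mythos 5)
There are genuine gaps in your proposal, and both stem from the same misconception about the relationship between complete representability and the classes $\Ra\CA_m$. You use the inclusion chain $\sf CRRA\subseteq \Ra\CA_{\omega}\subseteq \Ra\CA_5$ to deduce that $\A$ has no complete representation from $\A\notin \Ra\CA_5$. The first inclusion is false: a complete representation of an atomic relation algebra yields only $\A\in \bold S_c\Ra\CA_{\omega}$ (a complete \emph{subalgebra} of a full $\Ra$ reduct), not membership in $\Ra\CA_{\omega}$ itself; indeed the paper elsewhere treats $\sf CRRA$ and $\Ra\CA_{\omega}$ as incomparable. Consequently your derivation of non--complete--representability collapses. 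The paper proves this claim \emph{independently} and directly, by an Erd\H{o}s--Rado argument on the term algebra $\Tm\At\A$: fixing $x,y$ with $\y(0)(x,y)$, a complete representation must supply $2^{\aleph_0}$ witnesses $z_i$ with a red edge to $x$ and a $\y(0)$ edge to $y$; the edges inside $Z=\{z_i\}$ can only carry the countably many yellow and blue atoms, so Erd\H{o}s--Rado forces a monochromatic yellow or blue triangle, contradicting the forbidden triples. You need some such direct argument.

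The same misconception undermines your proof that $\B\in \Ra\CA_{\omega}$. A "Monk--style" complete representation of $\B$ on a countable base would again give only $\B\in\bold S_c\Ra\CA_{\omega}$, whereas the theorem (and its use in the final non--closure statement, where $\Ra\CA_{\omega}$ is the \emph{lower} bound for $\bold K$) requires genuine membership, i.e.\ $\B=\Ra\C$ for some $\C\in\CA_{\omega}$. The paper obtains this by exhibiting an $\omega$--dimensional cylindric basis $\H$ for $\B$ via the cylindric--basis game, setting $\C=\Sg^{\Ca\H}\B$, and then proving the non--trivial exhaustion claim that no new two--dimensional elements are created in the spare dimensions, so that $\B$ is all of $\Ra\C$. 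That exhaustion step is entirely absent from your outline. Your back--and--forth argument for $\A\equiv_{\infty,\omega}\B$ is fine and matches the paper's, and for $\A\notin\Ra\CA_5$ the paper simply cites \cite{bsl}, so your admitted sketchiness there is less of a problem than the two structural gaps above.
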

\begin{proof}
For an ordinal $\alpha$,  $\R^{\alpha}$ is as defined in example \ref{ra}.
In $\R^\alpha$, we use the
following abbreviations:
$r(0) = \sum_{k<\alpha}\r^k(0)$
$\r = \sum_{i<\omega}\r(i)$
$\y = \sum_{i<\omega}\y(i)$
$\bb = \sum_{i<\omega}\bb(i).$
These suprema exist because they are taken in the complex algebras which are complete.
The \emph{index} of $\r(i), \y(i)$ and $\bb(i)$ is $i$ and the index of
$\r^k(0)$ is also $0$.
Now let  $\B = \R^\omega$ and $\A=\R^{\mathfrak{n}}$ with $\mathfrak{n}\geq 2^{\aleph_0}$,
 be as in the hypothesis of the last item of  theorem \ref{main}. We claim that
$\B\in \Ra\CA_{\omega}$ and $\A\equiv \B$.
For the first required, we show that $\B$ has a cylindric bases by exhibiting a \ws\ for  \pe\
in the the cylindric-basis game, which is a simpler version of the hyperbasis game
\cite[Definition 12.26]{HHbook}. 

Now, let $\H$ be an $\omega$-dimensional cylindric basis for $\B$. Then  
$\Ca\H\in \CA_{\omega}$.  Consider the
cylindric algebra $\C = \Sg^{\Ca\H}\B$, the subalgebra of $\Ca\H$ generated by $\B$.  In principal, new two dimensional elements that 
were not originally in $\B$,  
can be created in $\C$ using the spare dimensions in $\Ca(\H)$.
But next we exclude this possibility. We show that $\B$ exhausts the $2$--dimensional elements of  $\Ra\C$, more concisely, 
we show that 
$\B=\Ra\C$.
We have proved that $\B\in \Ra\CA_{\omega}$ and $\A\equiv \B$. 
In \cite{bsl}, it is proved that $\A\notin \Ra\CA_5$. So we get the first required, namely, that any class $\bold K$, such that 
$\Ra\CA_\omega\subseteq \bold K\subseteq \Ra\CA_5$ is not 
closed under $\equiv_{\infty, \omega}$

Now we show that $\sf CRRA$ is not closed under $\equiv_{\infty, \omega}$, strengthening the result in \cite{HH} that 
only shows that 
$\sf CRRA$ is not closed under elementary equivalence proving 
the remaining required. 

Since $\B\in \Ra\CA_{\omega}$ has countably many atoms, 
then $\B$ is completely representable \cite[Theorem 29]{r}.
For this purpose, we show that $\A$ is not completely representable. We work with the term algebra, $\Tm\At\A$, since the latter is completely representable 
$\iff$ the complex algebra is.
Let  $\r = \{\r(i): 1\leq i<\omega\}\cup \{\r^k(0): k<2^{\aleph_0}\}$, $\y = \{\y(i):  i\in \omega\}$, $\bb^+ = \{\bb(i): i\in \omega\}.$
It is not hard to check every element of $\Tm\At\A\subseteq \wp(\At\A)$ has the form  
$F\cup R_0\cup B_0\cup Y_0$, where $F$ is a finite set of atoms, $R_0$ is either empty or a co-finite subset of $\r$, $B_0$ 
is either empty or a co--finite subset of $\bb$, and $Y_0$ is either empty or a co--finite subset 
of $\y$. 
We show that the existence of a complete representation necessarily forces a 
monochromatic triangle, that we avoided at the start when defining $\A$.

Let $x, y$ be points in the
representation with $M \models \y(0)(x, y)$.  For each $i< 2^{\aleph_0}$, there is a
point $z_i \in M$ such that $M \models {\sf red}(x, z_i) \wedge \y(0)(z_i, y)$ (some red $\sf red\in \r$).
Let $Z = \set{z_i:i<2^{\aleph_0}}$.  Within $Z$ each edge is labelled by one of the $\omega$ atoms in
$\y^+$ or $\bb^+$.  The Erdos-Rado theorem forces the existence of three points
$z^1, z^2, z^3 \in Z$ such that $M \models \y(j)(z^1, z^2) \wedge \y(j)(z^2, z^3)
\wedge \y(j)(z^3, z_1)$, for some single $j<\omega$  
or three  points $z^1, z^2, z^3 \in Z$ such that $M \models \bb(l)(z^1, z^2) \wedge \bb(l)(z^2, z^3)
\wedge \bb(l)(z^3, z_1)$, for some single $l<\omega$.  
This contradicts the
definition of composition in $\A$ (since we avoided monochromatic triangles).
We have proved that $\sf CRRA$ is not closed under $\equiv_{\infty, \omega}$, since $\A\equiv_{\infty, \omega}\B$, 
$\A$ is not completely representable,   but $\B$ is completely representable.
\end{proof}

\section{Applications using rainbows}

In this section we work only with $\CA$s giving several applications of rainbow constructions for cylindric--like algebras. The other cases are very similar.
For the definitions of pseudo--elementary and pseudo--universal,  the reader is referred to \cite[Definition 9.1]{HHbook}.
We denote the class of completely representable $\CA_n$s by $\sf CRCA_n$. We write $\bold S_d$ for the operation of forming dense subalgebrs.
It known that if $\bold K$ is pseudo--universal $\implies \bold K$ is elementary and closed under $\bold S$, cf. \cite[Chapter 10]{HHbook} for an extensive overview of such notions. 
\begin{theorem}\label{rainbow}
Let $2<n<\omega$ and let $k\geq 3$.
\begin{enumarab} 
\item For any class $\bold K$,
such that ${\sf CRCA}_n\cap {\bf S}_d{\sf Nr}_n\CA_{\omega}\subseteq \bold K\subseteq \bold S_c{\sf Nr}_n\CA_{k}$, 
$\bold K$ is not elementary.
Furthemore, any class $\bold L$ such that $\At({\sf Nr}_n\CA_{\omega})\subseteq \bold L\subseteq \bold \At(\bold S_c{\sf Nr}_n\CA_{n+3})$ 
is not elementary. Finally, ${\bf El}{\sf Nr}_n\CA_{\omega}\nsubseteq {\bf S}_d{\sf Nr}_n\CA_{\omega}\iff$ 
any class $\bold L$ such that  ${\sf Nr}_n\CA_{\omega}\subseteq \bold L\subseteq \bold S_c{\sf Nr}_n\CA_{n+3}$, $\bold L$ is not elementary.

\item The classes  ${\sf CRCA}_n$ and ${\sf Nr}_n\CA_m$ for $n<m$ are pseudo--elementary but not elementary, nor pseudo--universal. 
Furthermore, their elementary 
theory is recursively enumerable. For any $n<m$, the class ${\sf Nr}_n\CA_m$ 
is not closed under $L_{\infty,\omega}$ equivalence.

\item There is an atomic $\R\in {\sf Ra}\CA_{\omega}\cap {\bf El}\sf CRRA$ that is not completely representable.
Also, there is an  atomic algebra  $\A\in {\sf Nr}_{n}\CA_{\omega}\cap {\bf El}{\sf  CRCA_n}$, 
that is not completely representable. 
In particular, both $\sf CRRA$ and  ${\sf CRCA}_n$ are not elementary \cite{HH}.
\end{enumarab}
\end{theorem}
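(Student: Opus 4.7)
The plan is to build all three parts from a common family of rainbow constructions, tuned so that the private Ehrenfeucht--Fra\"\i ss\'e game on the green and red structures $\sf G$ and $\sf R$ is won by \pe\ in every finite-rounded instance but by \pa\ in some longer game. Lifting through the rainbow theorem to the atomic graph game on $\At(\A_{\sf G, \sf R})$ controls two parameters independently: (a) the smallest number of nodes \pa\ needs to win, which by Lemma \ref{Thm:n} pins down the smallest $m$ for which $\A_{\sf G, \sf R} \notin {\bf S}_c \Nr_n \CA_m$; and (b) whether a suitably saturated elementary extension $\A^*$ of $\A_{\sf G, \sf R}$ is completely representable, which by Lemma \ref{rep} amounts to \pe\ winning the $\omega$-rounded atomic game. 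The latter transfers automatically from her \ws s in all the finite-rounded games once $\A^*$ is $\omega_1$-saturated, while the former is arranged by the standard green-cone bombardment used in the proof of Theorem \ref{can}.

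For item (1), I would take the rainbow signature with $n+1$ greens and $n$ reds of Theorem \ref{can}, so that \pa\ needs exactly $n+3$ nodes; the resulting $\A := \A_{\sf G, \sf R}$ then lies outside ${\bf S}_c \Nr_n \CA_{n+3}$. Choosing $\sf G$ and $\sf R$ with \pe\ winning the finite-rounded private games (e.g.\ two countable linear orders agreeing on all $L_{\omega, \omega}$-sentences of any bounded quantifier rank), a countably saturated elementary extension $\A^*$ becomes completely representable. A Hodkinson-style realization $\A^* = \{\varphi^W : \varphi \in L_n\}$ over a homogeneous model $M$ further upgrades $\A^*$ to lie in $\Nr_n \CA_\omega \subseteq {\bf S}_d \Nr_n \CA_\omega$. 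Then $\A \equiv \A^*$ witnesses non-elementarity for every sandwich class between the two bounds. The atom-structure version of (1) follows by reading the same games off the atom structures directly; the final biconditional is a routine unpacking of definitions once both constructions are in hand, one direction being immediate and the other using the $\A, \A^*$ pair to fit any assumed counterexample into the sandwich.

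For item (2), the pseudo-elementary presentations are the obvious two-sorted theories: for $\Nr_n \CA_m$, sorts for $\A$ and for a witnessing dilation $\B \in \CA_m$ together with axioms asserting $\A = \mathfrak{Nr}_n \B$; for ${\sf CRCA}_n$, sorts for $\A$ and for the base of a complete representation together with the representability axioms. Recursive enumerability of their elementary theories then follows from the standard projection result for pseudo-elementary classes with recursive axiomatization. Non-elementarity is deferred to (3). Non-closure of $\Nr_n \CA_m$ under $L_{\infty, \omega}$-equivalence recycles the splitting construction of Section 4: the cardinality-twisted pair there satisfies $\A \equiv_{\infty, \omega} \B$ with only one being a genuine neat reduct, since the $n$-witness term that discriminates between them is invisible to $L_{\infty, \omega}$ (a \ws\ for \pe\ in the $\omega$-pebble infinite EF game over $\A$ and $\B$ can be read off the bookkeeping already performed in the cylindric algebra case). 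Non-pseudo-universality reduces to failure of closure under ${\bf S}$, which follows from standard examples.

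Item (3) is where I expect the main obstacle to lie: one needs atomic $\A \in \Nr_n \CA_\omega \cap {\bf El}\,{\sf CRCA}_n$ that is not completely representable, with an $\Ra$-analogue for relation algebras. The difficulty is to achieve genuine neat-reducibility (not merely subneat-reducibility, which is easy) together with failure of complete representability. The plan is to follow the Hodkinson construction recalled in the proof of Theorem \ref{can}: build a model $M$ in the rainbow signature by running the EF game on $\sf G, \sf R$ out to $\omega$ stages, form $\A = \{\varphi^W : \varphi \in L_n\}$ using only the relativization $W$ that avoids the shade of red, and invoke the back-and-forth systems $\Theta^\chi$ from \cite{Hodkinson} to obtain the crucial identification $\A \cong \mathfrak{Nr}_n \A_\omega$ with $\A_\omega$ the analogous algebra for $L_\omega$-formulas. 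The Lyndon conditions then hold for $\A$ because \pe\ wins all finite-rounded private games, so in an $\omega_1$-saturated elementary extension she wins the $\omega$-rounded game and that extension is completely representable by Lemma \ref{rep}; a deliberate asymmetry between $\sf G$ and $\sf R$ (for instance $\sf G = \Z$ paired with $\sf R = \N$) gives \pa\ a \ws\ in the $\omega$-rounded game on $\A$ itself, which by the same lemma forbids complete representability of $\A$. The $\Ra\CA_\omega$ variant is obtained by passing to the $\sf Ra$-reduct of the same $\omega$-dimensional dilation and verifying, via a parallel game analysis on relation-algebra networks, that the corresponding rainbow relation algebra lies in $\Ra\CA_\omega \cap {\bf El}\,{\sf CRRA}$ without being completely representable.
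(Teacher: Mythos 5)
Your overall architecture for item (1) --- a rainbow algebra on which \pe\ wins every finite-rounded atomic game (giving, via ultrapowers and an elementary chain, a completely representable elementarily equivalent companion) while \pa\ wins the $\omega$-rounded $n+3$-node game $F^{n+3}$ (excluding the algebra from $\bold S_c{\sf Nr}_n\CA_{n+3}$ by Lemma \ref{Thm:n}) --- is the paper's architecture. But two of your concrete choices fail. First, the finite rainbow $\A_{n+1,n}$ of Theorem \ref{can} cannot serve here: on that algebra \pa\ wins the finite-rounded game $G_k$ (with unboundedly many nodes) for some finite $k$, so there is no completely representable algebra elementarily equivalent to it. The paper instead uses $\C=\CA_{\Z,\N}$ with greens indexed by $\Z$, reds by pairs from $\N$, and the extra forbidden triples $(\g_0^i,\g_0^j,\r_{kl})$ when $\{(i,k),(j,l)\}$ is not order-preserving; \pe\ survives each $G_k$ by maintaining a widely spaced order-preserving partial map $\Z\to\N$, while \pa\ wins $F^{n+3}$ by forcing a strictly decreasing sequence in $\N$. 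Your hedge toward ``two countable linear orders'' gestures at this but omits the order-preserving triple condition, without which \pa\ has no winning strategy in $F^{n+3}$. Second, the $\bold S_d$ lower bound (${\sf CRCA}_n\cap\bold S_d{\sf Nr}_n\CA_\omega\subseteq\bold K$) is not delivered by a ``Hodkinson-style realization'': the paper must introduce the strictly stronger game $H_k$ on $\lambda$-neat hypernetworks with transformation and amalgamation moves, show \pe\ wins all $H_k(\At\C)$, and then build an $\omega$-dilation $\D=\bold P_a\D_a$ from the limit weak models using $L_{\infty,\omega}$-definable sets so that $\At\B\cong\At{\sf Nr}_n\D$ and $\Cm\At\B={\sf Nr}_n\D\in{\sf Nr}_n\CA_\omega$, whence $\B\in\bold S_d{\sf Nr}_n\CA_\omega$. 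That entire layer is missing from your plan.

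Item (3) is where your approach would genuinely fail, not merely where the work is hard. Any construction of the kind you describe (a Hodkinson/rainbow term algebra over a countable homogeneous model) produces a \emph{countable} atomic algebra; but a countable atomic $\A\in{\sf Nr}_n\CA_\omega$ is automatically completely representable, since by Lemma \ref{n} \pe\ has a winning strategy in $G_\omega(\At\A)$ and Lemma \ref{rep} then yields a complete representation. So no countable example can witness ${\sf Nr}_n\CA_\omega\cap{\bf El}\,{\sf CRCA}_n\not\subseteq{\sf CRCA}_n$. The paper's example is necessarily uncountable and is not a rainbow algebra at all: it is the Monk-like relation algebra with atoms $\Id$, $\g_0^i$ ($i<2^{\aleph_0}$), $\r_j$ ($1\leq j<\omega$), forbidding monochromatic triangles, which admits an $\omega$-dimensional amalgamation class $S$ of networks; one sets $\C=\Ca(S)$, $\R=\Ra\C$, $\B={\sf Nr}_n\C$, kills complete representability by an Erd\H{o}s--Rado argument forcing a monochromatic triangle among the $2^{\aleph_0}$ cone apexes, and obtains membership in ${\bf El}\,{\sf CRCA}_n$ by descending (ultrapower plus elementary chain) to a countable completely representable elementary equivalent. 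You should replace your plan for (3) with a construction of this uncountable type; the cardinality is doing the essential work that no game asymmetry between countable $\sf G$ and $\sf R$ can replicate.
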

\begin{proof}

(1) Fix finite $n>2$. We start by proving the weaker statement obtained from the required by replacing $\bold S_d$ by $\bold S_c$.
One takes the a rainbow--like $\CA_n$, call it $\C$, based on the ordered structure $\Z$ and $\N$.
The reds ${\sf R}$ is the set $\{\r_{ij}: i<j<\omega(=\N)\}$ and the green colours used 
constitute the set $\{\g_i:1\leq i <n-1\}\cup \{\g_0^i: i\in \Z\}$. 
In complete coloured graphs the forbidden triples are like 
the usual rainbow constructions based on $\Z$ and $\N$ specified above,   
but now 
the triple  $(\g^i_0, \g^j_0, \r_{kl})$ is also forbidden if $\{(i, k), (j, l)\}$ is not an order preserving partial function from
$\Z\to\N$.

We will show  in a moment that \pe\ has a \ws\ $\rho_k$ in the $k$--rounded game $G_k(\At\C)$ for all $k\in \omega$ \cite{mlq}.
Hence, using ultrapowers and an elementary chain argument  \cite[Corollary 3.3.5]{HHbook2}, one gets a countable algebra $\B$ 
such that $\B\equiv \C$, and  \pe\ has a \ws\ in $G_{\omega}(\At\B)$. 

The reasoning is as follows: We can assume that $\rho_k$ is deterministic. 
Let $\D$ be a non--principal ultrapower of $\C$.  Then \pe\ has a \ws\ $\sigma$ in $G_{\omega}(\D)$ --- essentially she uses
$\rho_k$ in the $k$'th component of the ultraproduct so that at each
round of $G_{\omega}(\D)$,  \pe\ is still winning in co--finitely many
components, this suffices to show she has still not lost. We can assume that $\C$ is countable by replacing it, without loss, by $\Tm\At\C$.
Winning strategies are preserved. Now one can use an
elementary chain argument to construct countable elementary
subalgebras $\C=\A_0\preceq\A_1\preceq\ldots\preceq\ldots \D$ in this manner.
One defines  $\A_{i+1}$ to be a countable elementary subalgebra of $\D$
containing $\A_i$ and all elements of $\D$ that $\sigma$ selects
in a play of $G_{\omega}(\D)$ in which \pa\ only chooses elements from
$\A_i$. Now let $\B=\bigcup_{i<\omega}\A_i$.  This is a
countable elementary subalgebra of $\D$, hence $\B\equiv \C$, because $\C\equiv \D$,  
and clearly \pe\ has a \ws\ in
$G_{\omega}(\B)$. Then $\B$ 
is completely representable by \cite[Theorem 3.3.3]{HHbook2}. 

Now we give \pe's \ws\ strategy in $G_k(\At\C)$ where $0<k<\omega$ is the number of rounds:  
Let $0<k<\omega$. We proceed inductively. Let $M_0, M_1,\ldots, M_r$, $r<k$ be the coloured graphs at the start of a play of $G_k$ just before round $r+1$.
Assume inductively, that \pe\ computes a partial function $\rho_s:\Z\to \N$, for $s\leq r:$

\begin{enumroman}
\item $\rho_0\subseteq \ldots \rho_t\subseteq\ldots\subseteq\ldots  \rho_s$ is (strict) order preserving; if $i<j\in \dom\rho_s$ then $\rho_s(i)-\rho_s(j)\geq  3^{k-r}$, where $k-r$
is the number of rounds remaining in the game,
and 
$$\dom(\rho_s)=\{i\in \Z: \exists t\leq s, \text { $M_t$ contains an $i$--cone as a subgraph}\},$$

\item for $u,v,x_0\in \nodes(M_s)$, if $M_s(u,v)=\r_{\mu,k}$, $\mu, k\in \N$, $M_s(x_0,u)=\g_0^i$, $M_s(x_0,v)=\g_0^j$,
where $i,j\in \Z$ are tints of two cones, with base $F$ such that $x_0$ is the first element in $F$ under the induced linear order,
then $\rho_s(i)=\mu$ and $\rho_s(j)=k$.
\end{enumroman} 
For the base of the induction \pe\ takes $M_0=\rho_0=\emptyset.$ 
Assume that $M_r$, $r<k$  ($k$ the number of rounds) is the current coloured graph and that \pe\ has constructed $\rho_r:\Z\to \N$ to be a finite order preserving partial map
such conditions (i) and (ii) hold. 

We show that (i) and (ii) can be maintained in a 
further round.
We check the most difficult case. Assume that $\beta\in \nodes(M_r)$, $\delta\notin \nodes(M_r)$ is chosen by \pa\ in his cylindrifier move,
such that $\beta$ and $\delta$ are apprexes of two cones having
same base and green tints $p\neq  q\in \Z$. 
Now \pe\ adds $q$ to $\dom(\rho_r)$ forming $\rho_{r+1}$ by defining the value $\rho_{r+1}(p)\in \N$ 
in such a way to preserve the (natural) order on $\dom(\rho_r)\cup \{q\}$, that is maintaining property (i).
Inductively, $\rho_r$ is order preserving and `widely spaced' meaning that the gap between its elements is
at least $3^{k-r}$, so this can be maintained in a further round.

Now \pe\  has to define a (complete) coloured graph 
$M_{r+1}$ such that $\nodes(M_{r+1})=\nodes(M_r)\cup \{\delta\}.$ 
In particular, she has to find a suitable 
red label for the edge $(\beta, \delta).$
Having $\rho_{r+1}$ at hand she proceeds as follows. Now that $p, q\in \dom(\rho_{r+1})$, 
she lets $\mu=\rho_{r+1}(p)$, $b=\rho_{r+1}(q)$. The red label she chooses for the edge $(\beta, \delta)$ is: (*)\ \  $M_{r+1}(\beta, \delta)=\r_{\mu,b}$.
This way she maintains property (ii) for $\rho_{r+1}.$  Next we show that this is a \ws\ for \pe. 

We check that \pe's strategy is a winning one which entails checking the consistency of newly created triangles proving that $M_{r+1}$ is a 
coloured graph completing the induction.  Since $\rho_{r+1}$ is chosen to preserve order, no new forbidden triple (involving two greens and one red) will be created.
Now we check red triangles only of the form $(\beta, y, \delta)$ in $M_{r+1}$ $(y\in \nodes(M_r)$). 
We can assume that  $y$ is the apex of a cone with base $F$ in $M_r$ and green tint $t$, say,
and that $\beta$ is the appex of the $p$--cone having the same base. 
Then inductively by condition (ii), taking $x_0$ to be the first element of $F$, and taking  
the nodes $\beta, y$, and the tints $p, t$, for $u, v, i, j$,  respectively, we have by observing that 
$\beta, y\in \nodes(M_r)$, $\beta, y\in \dom(\rho_r)$ and $\rho_r\subseteq \rho_{r+1}$, 
the following:  
$M_{r+1}(\beta,y)=M_{r}(\beta, y)=\r_{\rho_{r}(p), \rho_{r}(t)}=r_{\rho_{r+1}(p), \rho_{r+1}(t)}.$
By  her strategy, we have  $M_{r+1}(y,\delta)=\r_{\rho_{r+1}(t), \rho_{r+1}(q)}$ 
and we know by (*) that $M_{r+1}(\beta, \delta)=\r_{\rho_{r+1}(p), \rho_{r+1}(q)}$. 
The triple $(\r_{\rho_{r+1}(p), \rho_{r+1}(t)}, \r_{\rho_{r+1}(t), \rho_{r+1}(q)}, \r_{\rho_{r+1}(p), \rho_{r+1}(q)})$
of reds is consistent  and we are done with this case. 
All other edge labelling and colouring $n-1$ tuples in $M_{r+1}$ 
by yellow shades are  exactly like in \cite{HH}.

On the other hand, \pa\ has a \ws\ in $F^{n+3}(\At\C)$.
The idea here, is that, as is the case with \ws's of \pa\ in rainbow constructions, 
\pa\ bombards \pe\ with cones having distinct green tints demanding a red label from \pe\ to appexes of succesive cones.
The number of nodes are limited but \pa\ has the option to re-use them, so this process will not end after finitely many rounds.
The added order preserving condition relating two greens and a red, forces \pe\ to choose red labels, one of whose indices form a decreasing 
sequence in $\N$.  In $\omega$ many rounds \pa\ 
forces a win, 
so by lemma \ref{n}, $\C\notin \bold S_c{\sf Nr}_n\CA_{n+3}$.

He plays as follows: In the initial round \pa\ plays a graph $M$ with nodes $0,1,\ldots, n-1$ such that $M(i,j)=\w_0$
for $i<j<n-1$
and $M(i, n-1)=\g_i$
$(i=1, \ldots, n-2)$, $M(0, n-1)=\g_0^0$ and $M(0,1,\ldots, n-2)=\y_{\Z}$. This is a $0$ cone.
In the following move \pa\ chooses the base  of the cone $(0,\ldots, n-2)$ and demands a node $n$
with $M_2(i,n)=\g_i$ $(i=1,\ldots, n-2)$, and $M_2(0,n)=\g_0^{-1}.$
\pe\ must choose a label for the edge $(n+1,n)$ of $M_2$. It must be a red atom $r_{mk}$, $m, k\in \N$. Since $-1<0$, then by the `order preserving' condition
we have $m<k$.
In the next move \pa\ plays the face $(0, \ldots, n-2)$ and demands a node $n+1$, with $M_3(i,n)=\g_i$ $(i=1,\ldots, n-2)$,
such that  $M_3(0, n+2)=\g_0^{-2}$.
Then $M_3(n+1,n)$ and $M_3(n+1, n-1)$ both being red, the indices must match.
$M_3(n+1,n)=r_{lk}$ and $M_3(n+1, r-1)=r_{km}$ with $l<m\in \N$.
In the next round \pa\ plays $(0,1,\ldots n-2)$ and re-uses the node $2$ such that $M_4(0,2)=\g_0^{-3}$.
This time we have $M_4(n,n-1)=\r_{jl}$ for some $j<l<m\in \N$.
Continuing in this manner leads to a decreasing
sequence in $\N$.
Let $k\geq 3$ and let $\bold K$ be as in the statement. Then $\C\notin \bold K$, $\B\in \bold K\cap {\sf CRCA}_n$ and 
$\C\equiv \B$, we are done. 

Now we prove the (stronger) required. We first give the general idea. We still work with $\C$.
We can (and will) define a $k$--rounded atomic game stronger than $G_k$ call it $H_k$, for $k\leq \omega$,  
so that if $\B\in \CA_n$ is countable and atomic and \pe\ has a \ws\ in $H_{\omega}(\At\B)$, 
then 
(*) $\At\B\in \At{\sf Nr}_n\CA_{\omega}$ and $\Cm\At\B\in {\sf Nr}_n\CA_{\omega}$. 

Then we show that \pe\ has a \ws\ in $H_k(\At\C)$ for all 
$k\in \omega$, hence using ultrapowers and an elementary chain argument, we get that $\C\equiv \B$, 
for some countable completely representable $\B$ that satisfies the two conditions in (*).
Since $\B\subseteq_d \Cm\At\B$, we get the required result, because $\B\in \bold S_d{\sf Nr}_n\CA_{\omega}$
and as before $\C\notin \bold S_c{\sf Nr}_n\CA_{n+3}$ and $\C\equiv \B$. 

Now we prove the second part. Let $\bold L$ be as specified and $\B$ and $\C=\CA_{\Z, \N}$ be the algebras constructed above. Since an 
atom structure of an algebra is first order interpretable in the algebra, then we have $\B\equiv \C\implies \At\B\equiv \At\C$.
Furthermore $\At\B\in \At({\sf Nr}_n\CA_{\omega})\subseteq \bold L$ (though $\B$ might not be in ${\sf Nr}_n\CA_{\omega}$) 
and $\At\C\notin  \At(\bold S_c{\sf Nr}_n\CA_{n+3})\supseteq \bold L$. 
The last part follows from the fact that
if $\D\in \CA_n$ is atomic, 
then $\At\D\in   \At(\bold S_c{\sf Nr}_n\CA_{n+3})\iff \D\in \bold S_c{\sf Nr}_n\CA_{n+3}$. 
We conclude that $\bold L$ is not elementary

We define the game $H$.
But first some preparation.
Fix $2<n<\omega$.

For an $n$--dimensional atomic network on an atomic $\CA_n$ and for  $x,y\in \nodes(N)$, we set  $x\sim y$ if
there exists $\bar{z}$ such that $N(x,y,\bar{z})\leq {\sf d}_{01}$.
Define the  equivalence relation $\sim$ over the set of all finite sequences over $\nodes(N)$ by
$\bar x\sim\bar y$ iff $|\bar x|=|\bar y|$ and $x_i\sim y_i$ for all
$i<|\bar x|$. (It can be easily checked that this indeed an equivalence relation).

A \emph{ hypernetwork} $N=(N^a, N^h)$ over an atomic $\CA_n$
consists of an $n$--dimensional  network $N^a$
together with a labelling function for hyperlabels $N^h:\;\;^{<
\omega}\!\nodes(N)\to\Lambda$ (some arbitrary set of hyperlabels $\Lambda$)
such that for $\bar x, \bar y\in\; ^{< \omega}\!\nodes(N)$
if $\bar x\sim\bar y \Rightarrow N^h(\bar x)=N^h(\bar y).$
If $|\bar x|=k\in \N$ and $N^h(\bar x)=\lambda$, then we say that $\lambda$ is
a $k$-ary hyperlabel. $\bar x$ is referred to as a $k$--ary hyperedge, or simply a hyperedge.
We may remove the superscripts $a$ and $h$ if no confusion is likely to ensue.

A hyperedge $\bar{x}\in {}^{<\omega}\nodes(N)$ is {\it short}, if there are $y_0,\ldots, y_{n-1}$
that are nodes in $N$, such that
$N(x_i, y_0, \bar{z})\leq {\sf d}_{01}$
or $\ldots N(x_i, y_{n-1},\bar{z})\leq {\sf d}_{01}$
for all $i<|x|$, for some (equivalently for all)
$\bar{z}.$
Otherwise, it is called {\it long.}
A hypernetwork $N$
is called {\it $\lambda$--neat} if $N(\bar{x})=\lambda$ for all short hyperedges.

Concerning \pa's  moves, $H_m$ has  $m$ rounds, $m\leq \omega$.  
He can play a cylindrifier move, like before but now played on $\lambda$---neat hypernetworks
with $\lambda$ a constant label on short hyperedges.
Also \pa\ can play a \emph{transformation move} by picking a
previously played $\lambda$--neat hypernetwork $N$ and a partial, finite surjection
$\theta:\omega\to\nodes(N)$, this move is denoted $(N, \theta)$.  \pe's
response is mandatory. She must respond with $N\theta$.
Finally, \pa\ can play an
\emph{amalgamation move} by picking previously played $\lambda$--neat hypernetworks
$M, N$ such that
$M\restr {\nodes(M)\cap\nodes(N)}=N\restr {\nodes(M)\cap\nodes(N)},$
and $\nodes(M)\cap\nodes(N)\neq \emptyset$.
This move is denoted $(M,
N).$
To make a legal response, \pe\ must play a $\lambda$--neat
hypernetwork $L$ extending $M$ and $N$, where
$\nodes(L)=\nodes(M)\cup\nodes(N)$.
We claim that  \pe\ has a \ws\ in $H_{m}(\At\C)$ for each finite $m$.
The analogous proof for relation algebras is rather long
\cite[p.25--31]{r}. We assume that the 
claim is true and  take it from there, then we give the general idea skipping some of the intricate details.

Using the usual technique of forming ultrapowers followed by 
an elementary  chain argument, we get that there exists a countable (completely representable) algebra, which we continue to denote by a slight abuse of notation
also $\B$, such that $\C\equiv \B$, and  \pe\ has a \ws\ on $H(\At\B)$. For brevity, let $\alpha=\At\B$.
Using \pe's \ws\ in $H$, one builds an $\omega$--dilation $\D_a$ of $\B$
for every $a\in \At\B$, based on a structure $\M_a$ in some signature to be specified shortly. 
Strictly speaking, $\M_a$ will be a {\it weak model}, 
where assignments are {\it relativized}, they are required to agree 
co--finitely with a fixed sequence in $^{\omega}\M_a$.
This weak model $\M_a$ is taken in a signature $L$ consisting of one $n$--ary relation for each $b\in \At\B$ and 
a $k$--ary relation symbol  for each hyperedge of length $k$ labelled by $\lambda$ the constant neat hyperlabel. 

For $a\in \At\B$, the weak model $\M_a$ is the limit of the play $H$; in the sense that $\M_a$ is the union of the $\lambda$--neat 
hypernetworks on $\B$ played during the game $H$, with starting point the initial atom $a$ that \pa\ chose in the first move.  
Labels for the edges and hyperedges in $\M_a$ 
are defined the obvious way,  inherited from the $\lambda$--neat hypernetworks played during the $\omega$--rounderd game $H_{\omega}(\At\B)$. 
These are nested, so this labelling is well 
defined giving an 
interpretation of {\it only} the atomic formulas of $L$ in $\M_a$.  
There is some space here in `completing' the interpretation. One uses 
an extension $\L$, not necessarily a proper one, of  $L_{\omega, \omega}$ as a vehicle for constructing 
$\D_a$.  The algebra $\D_a$ will  be a  {\it weak set algebra} based on $\M_a$ of $\L$--formulas taken in the signature $L$. That is the 
base in the sense of \cite[Definition 3.1.1]{HMT2} of $\D_a$ is $\M_a$, 
and the set--theoretic operations of $\D_a$ are read off the connectives in $\L$. 
 In all cases, as long as $\L$ contains $L_{\omega, \omega}$ as a fragment, we get that $\B$ neatly embeds into $\D$, that is $\B\subseteq \Nr_n\D$, where 
$\D={\bf P}_{a\in \At\B}\D_a$.  

We are faced here with three possibilites measuring `how close' $\B$ is to $\Nr_n\D$. We go from the closest to the less close. 
Either (a) $\B=\Nr_n\D$  or (b) $\B\subseteq_d \Nr_n\D$ or  (c) $\B\subseteq_c \Nr_n\D$. From the first part, building 
$\D$ using the weaker game $G$ used in the proof of the previous item, we can get the last possibility. It is reasonable to expect that the stronger $\L$ is, the 
`more  control' $\At\B$ has over the hitherto obtained $\omega$--dilation $\D$; the closer $\B$ 
is  to the neat $n$--reduct of $\D$ based on $\L$-formulas. 
If (a) is true than any $\bold K$ between ${\sf Nr}_n\CA_{\omega}\cap \sf CRCA_n$ and $\bold S_c{\sf Nr}_n\CA_{n+3}$ would be non--elementary.
{\it We could not prove (a)}.
So let us approach the two remaining possibilities (b) and (c).
Suppose we take  $\L=L_{\infty, \omega}$. Then using the fact that in the $\lambda$--neat hypernetworks played during the game $H$ 
short hyperedges are constantly
labelled by $\lambda$, one can show that $\B$ and $\Nr_n\D$ have {\it isomorphic 
atom structures}, in symbols $\At\B\cong \At\Nr_n\D$ as follows.
For brevity, denote the hitherto obtained $\At\B$ by $\alpha$.

Fix some $a\in\alpha$. Using \pe\ s \ws\ in the game $H(\alpha)$ played on $\lambda$--neat hypernetworks $\lambda$ a constant label kept on short hyperedges,
one defines a
nested sequence $M_0\subseteq M_1,\ldots$ of $\lambda$--neat hypernetworks
where $M_0$ is \pe's response to the initial \pa-move $a$, such that:
If $M_r$ is in the sequence and $M_r(\bar{x})\leq {\sf c}_ia$ for an atom $a$ and some $i<n$,
then there is $s\geq r$ and $d\in\nodes(M_s)$
such that  $M_s(\bar{y})=a$,  $\bar{y}_i=d$ and $\bar{y}\equiv_i \bar{x}$.
In addition, if $M_r$ is in the sequence and $\theta$ is any partial
isomorphism of $M_r$, then there is $s\geq r$ and a
partial isomorphism $\theta^+$ of $M_s$ extending $\theta$ such that
$\rng(\theta^+)\supseteq\nodes(M_r)$ (This can be done using \pe's responses to amalgamation moves).

Now let $\M_a$ be the limit of this sequence, that is $\M_a=\bigcup M_i$, the labelling of $n-1$ tuples of nodes
by atoms, and hyperedges by hyperlabels done in the obvious way.
Let $L$ be the signature with one $n$-ary relation for
each $b\in\alpha=\At\B$, and one $k$--ary predicate symbol for
each $k$--ary hyperlabel $\lambda$.
{\it Now we work in $L_{\infty, \omega}.$}
For fixed $f_a\in\;^\omega\!\nodes(\M_a)$, let
$\U_a=\set{f\in\;^\omega\!\nodes(\M_a):\set{i<\omega:g(i)\neq
f_a(i)}\mbox{ is finite}}$.
Now we  make $\U_a$ into the base of an $L$ relativized structure 
${\cal M}_a$ like in \cite[Theorem 29]{r} except that we allow a clause for infinitary disjunctions.
$${\cal M}_a, f\models (\bigvee_{i\in I} \phi_i)\iff(\exists i\in I)({\cal M}_a,  f\models\phi_i).$$
We are now
working with (weak) set algebras  whose semantics is induced by $L_{\infty, \omega}$ formulas in the signature $L$,
instead of first order ones.
For any such $L$-formula $\phi$, write $\phi^{{\cal M}_a}$ for
$\set{f\in\U_a: {\cal M}_a, f\models\phi}.$
Let $D_a= \set{\phi^{{\cal M}_a}:\phi\mbox{ is an $L$-formula}}$ and
$\D_a$ be the weak set algebra with universe $D_a$. 
Let $\D=\bold P_{a\in \alpha} \D_a$. Then $\D$ is a {\it generalized weak set algebra} \cite[Definition 3.1.2 (iv)]{HMT2}.
Let $x\in \D$. Then $x=(x_a:a\in\alpha)$, where $x_a\in\D_a$.  For $b\in\alpha$ let
$\pi_b:\D\to \D_b$ be the projection map defined by
$\pi_b(x_a:a\in\alpha) = x_b$.  Conversely, let $\iota_a:\D_a\to \D$
be the embedding defined by $\iota_a(y)=(x_b:b\in\alpha)$, where
$x_a=y$ and $x_b=0$ for $b\neq a$.  
We show that $\alpha\cong \At\Nr_n\D$ and that $\Cm\alpha\cong \Nr_n\D$. 

Suppose $x\in\Nr_n\D\setminus\set0$.  Since $x\neq 0$,
then it has a non-zero component  $\pi_a(x)\in\D_a$, for some $a\in \alpha$.
Assume that $\emptyset\neq\phi(x_{i_0}, \ldots, x_{i_{k-1}})^{\D_a}= \pi_a(x)$, for some $L$-formula $\phi(x_{i_0},\ldots, x_{i_{k-1}})$.  We
have $\phi(x_{i_0},\ldots, x_{i_{k-1}})^{\D_a}\in\Nr_{n}\D_a$.
Pick
$f\in \phi(x_{i_0},\ldots, x_{i_{k-1}})^{\D_a}$  
and assume that ${\cal M}_a, f\models b(x_0,\ldots x_{n-1})$ for some $b\in \alpha$.
We show that
$b(x_0, x_1,\ldots, x_{n-1})^{\D_a}\subseteq
 \phi(x_{i_0},\ldots, x_{i_{k-1}})^{\D_a}$.  
Take any $g\in
b(x_0, x_1\ldots, x_{n-1})^{\D_a}$,
so that ${\cal M}_a, g\models b(x_0, \ldots, x_{n-1})$.  
The map $\{(f(i), g(i)): i<n\}$
is a partial isomorphism of ${\cal M}_a.$ Here that short hyperedges are constantly labelled by $\lambda$ 
is used.
This map extends to a finite partial isomorphism
$\theta$ of $M_a$ whose domain includes $f(i_0), \ldots, f(i_{k-1})$.
Let $g'\in {\cal M}_a$ be defined by
\[ g'(i) =\left\{\begin{array}{ll}\theta(i)&\mbox{if }i\in\dom(\theta)\\
g(i)&\mbox{otherwise}\end{array}\right.\] 
We have ${\cal M}_a,
g'\models\phi(x_{i_0}, \ldots, x_{i_{k-1}})$. But 
$g'(0)=\theta(0)=g(0)$ and similarly $g'(n-1)=g(n-1)$, so $g$ is identical
to $g'$ over $n$ and it differs from $g'$ on only a finite
set.  Since $\phi(x_{i_0}, \ldots, x_{i_{k-1}})^{\D_a}\in\Nr_{n}\D_a$, we get that
${\cal M}_a, g \models \phi(x_{i_0}, \ldots,
x_{i_k})$, so $g\in\phi(x_{i_0}, \ldots, x_{i_{k-1}})^{\D_a}$ (this can be proved by induction on quantifier depth of formulas).  
This
proves that 
$b(x_0, x_1\ldots x_{n-1})^{\D_a}\subseteq\phi(x_{i_0},\ldots,
x_{i_k})^{\D_a}=\pi_a(x),$ and so
$\iota_a(b(x_0, x_1,\ldots x_{n-1})^{\D_a})\leq
\iota_a(\phi(x_{i_0},\ldots, x_{i_{k-1}})^{\D_a})\leq x\in\D_a\setminus\set0.$

Now every non--zero element 
$x$ of $\Nr_{n}\D_a$ is above a non--zero element of the following form 
$\iota_a(b(x_0, x_1,\ldots, x_{n-1})^{\D_a})$
(some $a, b\in \alpha$) and these are the atoms of $\Nr_{n}\D_a$.  
The map defined  via $b \mapsto (b(x_0, x_1,\dots, x_{n-1})^{\D_a}:a\in \alpha)$ 
is an isomorphism of atom structures, 
so that $\alpha=\At\B\in \At{\sf Nr}_n\CA_{\omega}$.

Because we are working in $L_{\infty, \omega},$ infinite disjuncts exist in $\D_a$ $(a\in \alpha)$,
hence, they exist too in the dilation $\D=\bold P_{a\in\alpha}\D_a$.
Therefore $\D$ is complete, so $\Nr_n\D$ is complete, too.
Indeed, let  $X\subseteq \Nr_n\D$. Then by completeness of $\D$, we get that
$d=\sum^{\D}X$ exists.  Assume that  $i\notin n$, then
${\sf c}_id={\sf c}_i\sum X=\sum_{x\in X}{\sf c}_ix=\sum X=d,$
because the ${\sf c}_i$s are completely additive and ${\sf c}_ix=x,$
for all $i\notin n$, since $x\in \Nr_n\D$.
We conclude that $d\in \Nr_n\D$,
and so $\Nr_n\D$ is complete as claimed. 

Now $\D={\bf P}_{a\in \At\B}\D_a$  and 
its $n$--neat reduct $\Nr_n\D$ are complete. 
Accordingly, we can make the identification  
$\Nr_n\D\subseteq_d \Cm\At\B$.  By density,
we get that  $\Nr_n\D=\Cm\At\B$ (since $\Nr_n\B$ is complete),  
hence $\Cm\At\B\in {\sf Nr}_n\CA_{\omega}$.  

Using only $\Cm\At\B\in {\sf Nr}_n\CA_{\omega}$, 
we get  that $\B\in \bold S_d{\sf Nr}_n\CA_{\omega}$, 
because $\B$ is dense in its \de\ completion. 
Hence we attain the second possibility.
But it will now readily follows that any class $\bold K$, such that $\bold S_d{\sf Nr}_n\CA_{\omega}\cap {\sf CRCA_n}\subseteq \bold K\subseteq \bold S_c{\sf Nr}_n\CA_{n+3}$
is not elementary, where $\bold S_d$ denotes the operation of forming dense subalgebras.
Indeed, we have $\B\subseteq_d \Cm\At\B\in \bold {\sf Nr}_n\CA_{\omega}\cap \sf CRCA_n\subseteq \bold K$, 
$\C\notin \bold S_c{\sf Nr}_n\CA_{n+3}\supseteq \bold K$, and $\C\equiv \B$.
 Non--elementarity of $\bold N_k$ follows from $\C\equiv \B$, 
$\Cm\At\B\in {\sf Nr}_n{\sf CA}_{\omega}$ and $\C\notin {\sf Nr}_n\CA_{\omega}(\supseteq \bold S_c{\sf Nr}_n\CA_{n+3}$). 

For the last part, It suffices to consider classes between ${\sf Nr}_n\CA_{\omega}$ and $\bold S_d{\sf Nr}_n\CA_{\omega}$. 
One implication, namely $\Longleftarrow$  is trivial. For the other less trivial implication, assume for contradiction that there is such a class $\bold K$ that is elementary.
Then ${\bf El}{\sf Nr}_n\CA_{\omega}\subseteq \bold K$, because $\bold K$ is elementary.
It readily follows that  ${\sf Nr}_n\CA_{\omega}\subseteq {\bf El}{\sf Nr}_n\CA_{\omega}\subseteq \bold K\subseteq \bold S_d{\sf Nr}_n{\sf CA}_{\omega}$,
which is impossible by the 
given assumption that ${\bf El}{\sf Nr}_n\CA_{\omega}\subsetneq \bold S_d{\sf Nr}_n{\sf CA}_{\omega}$.\\

Now we give an outline of the \ws\ of \pe\ in $H_m(\At\C)$ for all finite $m$. 
For a start, we change the board of play but only formally. The play is now on {\it $\lambda$--neat hypergraphs}.  
Given a rainbow algebra $\A$, there is a one to one correspondence between coloured graphs on $\At\A$ and networks on $\At\A$ \cite[Half of p. 76]{HHbook2}
denote this correspondence, expressed by a bijection from coloured graphs to networks by  (*): 
$$\Gamma\mapsto N_{\Gamma}, \ \ \nodes(\Gamma)=\nodes(N_{\Gamma}).$$
Now the game $H$ can be re-formulated to be played on {\it $\lambda$--neat hypergraphs} on a rainbow algebra $\A$; these are of the  form
$(\Delta, N^h)$, where $\Delta$ is a coloured graph on $\At\A$, $\lambda$ is a hyperlabel, and 
$N^h$ is as before, $N^h: ^{<\omega}\nodes(\Delta)\to \Lambda$,
such that for $\bar x, \bar y\in\; ^{< \omega}\!\nodes(\Delta)$,
if $\bar x\sim\bar y \Rightarrow N^h(\bar x)=N^h(\bar y).$ Here $\bar{x}\sim \bar{y}$, making the obvious translation,
 is the equivalence relation defined by:  $x\sim y\iff$ $|x|=|y|$ and $N_{\Delta}(x_i, y_i, \bar{z})\leq {\sf d}_{01}$ for all $i<|x|$ and some 
$\bar{z}\in {}^{n-2}\nodes(\Delta)$.

All notions earlier defined for hypernetworks, in particular, $\lambda$--neat ones,  translate to 
$\lambda$--neat hypergraphs, using (*),
like short hyperdges, long hypedges, $\lambda$--neat hypergraphs, etc.
The game is played now on $\lambda$--neat hypergraphs on which the constant label
$\lambda$ is kept on the short hyperedges in $^{<\omega}\nodes(\Delta)$.
Labelling $\lambda$--neat hyperedges is done as follows:
In a play, \pe\ is required to play $\lambda$--neat hypernedges so she has no choice about the
the short edges, these are labelled by $\lambda$. In response to a cylindrifier move by \pa\
extending the current hypergraph providing a new node $k$,
and a previously played coloured hyperngraph $M$
all long hyperedges not incident with $k$ necessarily keep the hyperlabel they had in $M$.

All long hyperedges incident with $k$ in $M$
are given unique hyperlabels not occurring as the hyperlabel of any other hyperedge in $M$.
In response to an amalgamation move, which involves two hypernetworks required to be amalgamated, say $(M,N)$
all long hyperedges whose range is contained in $\nodes(M)$
have hyperlabel determined by $M$, and those whose range is contained in $\nodes(N)$ have hyperlabels determined
by $N$. If $\bar{x}$ is a long hyperedge of \pe\ s response $L$ where
$\rng(\bar{x})\nsubseteq \nodes(M)$, $\nodes(N)$ then $\bar{x}$
is given
a new hyperlabel, not used in 
any previously played hypernetwork and not used within $L$ as the label of any hyperedge other than $\bar{x}$.
This completes her strategy for labelling hyperedges.

We have already dealt with the `graph part' in the proof above  
We turn to the remaining amalgamation moves. We need some notation and terminology taken from \cite[pp.25]{r}.
Every edge of any hypergraph has an {\it owner \pa\ or \pe}, namely, the one who coloured this edge.
We call such edges \pa\ edges or \pe\ edges. Each long hyperedge $\bar{x}$ in $N^h$ of a hypergraph $N$
occurring in the play has {\it an envelope} $v_N(\bar{x})$ to be defined shortly.\\
In the initial round,  \pa\ plays $a\in \alpha$ and \pe\ plays $N_0$
then all edges of $N_0$ belongs to \pa.
There are no long hyperedges in $N_0$. If in a later move,
\pa\ plays the transformation move $(N,\theta)$
and \pe\ responds with $N\theta$, then owners and envelopes are inherited in the obvious way.
If \pa\ plays a cylindrifier move requiring a new node $k$ and \pe\ responds with $M$ then the owner
in $M$ of an edge not incident with $k$ is the same as it was in $N$
and the envelope in $M$ of a long hyperedge not incident with $k$ is the same as that it was in $N$.
All  edges $(l,k)$
for $l\in \nodes(N)\sim \{k\}$ belong to \pe\ in $M$.
if $\bar{x}$ is any long hyperedge of $M$ with $k\in \rng(\bar{x})$, then $v_M(\bar{x})=\nodes(M)$.\\
If \pa\ plays the amalgamation move $(M,N)$ (of two $\lambda$--neat hypergraphs) and \pe\ responds with $L$
then for $m\neq n\in \nodes(L)$ the owner in $L$ of a edge $(m,n)$ is \pa\ if it belongs to
\pa\ in either $M$ or $N$, in all other cases it belongs to \pe\ in $L$.
If $\bar{x}$ is a long hyperedge of $L$
then $v_L(\bar{x})=v_M(\bar{x})$ if $\rng(\bar{x})\subseteq \nodes(M)$, $v_L(\bar{x})=v_N(\bar{x})$ and  $v_L(\bar{x})=\nodes(M)$ otherwise.
This completes the definition of owners and envelopes.
The next claim, basically, reduces amalgamation moves to cylindrifier moves.
By induction on the number of rounds one can show:

{\bf Claim}:\label{r}  
Let $M, N$ occur in a play of $H_m$, $0<m\in \omega.$ in which \pe\ uses the above labelling
for hyperedges. Let $\bar{x}$ be a long hyperedge of $M$ and let $\bar{y}$ be a long hyperedge of $N$.
Then for any hyperedge $\bar{x}'$ with $\rng(\bar{x}')\subseteq v_M(\bar{x})$, if $M(\bar{x}')=M(\bar{x})$
then $\bar{x}'=\bar{x}$. 
If $\bar{x}$ is a long hyperedge of $M$ and $\bar{y}$ is a long hyperedge of $N$, and $M(\bar{x})=N(\bar{y}),$
then there is a local isomorphism $\theta: v_M(\bar{x})\to v_N(\bar{y})$ such that
$\theta(x_i)=y_i$ for all $i<|x|$. For any $x\in \nodes(M)\sim v_M(\bar{x})$ and $S\subseteq v_M(\bar{x})$, if $(x,s)$ belong to \pa\ in $M$
for all $s\in S$, then $|S|\leq 2$.

Next,  we proceed inductively with the inductive hypothesis exactly as before, except that now each $N_r$ is a
$\lambda$--neat hypergraph.
All what remains is the amalgamation move. With the above claim at hand,
this turns out an easy task to implement guided by \pe\ s
\ws\ in the graph part.\\ 
We consider an amalgamation move at round $0<r$, $(N_s,N_t)$ chosen by 
\pa\ in round $r+1$, \pe\ has to deliver an amalgam $N_{r+1}$.
 \pe\ lets $\nodes(N_{r+1})=\nodes(N_s)\cup \nodes (N_t)$, then she, for a start, 
has to choose a colour for each edge $(i,j)$ where $i\in \nodes(N_s)\sim \nodes(N_t)$ and $j\in \nodes(N_t)\sim \nodes(N_s)$.
Let $\bar{x}$ enumerate $\nodes(N_s)\cap \nodes(N_t).$
If $\bar{x}$ is short, then there are at most two nodes in the intersection
and this case is identical to the cylindrifier move.
If not, that is if $\bar{x}$ is long in $N_s$, then by the claim
there is a partial isomorphism $\theta: v_{N_s}(\bar{x})\to v_{N_t}(\bar{x})$ fixing
$\bar{x}$. We can assume that
$v_{N_s}(\bar{x})=\nodes(N_s)\cap \nodes (N_t)=\rng(\bar{x})=v_{N_t}(\bar{x}).$
It remains to label the edges $(i,j)\in N_{r+1}$ where $i\in \nodes(N_s)\sim \nodes (N_t)$ and $j\in \nodes(N_t)\sim \nodes(N_s)$.
Her strategy is now again similar to the cylindrifier move. If $i$ and $j$ are tints of the same cone she chooses a red using $\rho_{r+1}$ (constructed inductively like in the above proof),
if not she  chooses  a white. She never chooses a green.

Concerning $n-1$ tuples 
she  needs to label $n-1$ hyperedges by shades of yellow.
For each tuple $\bar{a}=a_0,\ldots a_{n-2}\in N_{r+1}^{n-1}$,   with no edge
$(a_i, a_j)$ coloured green (we have already labelled edges), then  \pe\ colours $\bar{a}$ by $\y_S$, where
$$S=\{i\in \Z: \text { there is an $i$ cone in $N_{r+1}$ with base $\bar{a}$}\}.$$
We have shown briefly 
that  \pe\ has a \ws\ in $H_{m}(\At\C)$ 
for each finite $m$.

(2)  The class ${\sf CRCA}_n$ is not elementary by the proof of the first item, cf. \cite{HH},  hence it is not pseudo--univeral. It is also not 
closed under $\bold S$: Take any representable algebra that is not completely representable,  
for example an infinite algebra that is not atomic. Other atomic examples is the term algebra $\Tm \bf At$ dealt with in the proof of theorem \ref{can} 
and $\C$ dealt with above. The former  is not completely representable because a complete representation of $\Tm\bf At$ induces a representation of $\Cm\bf At$ 
which we know is outside 
$\bold S{\sf Nr}_n\CA_{n+3}$. 
Call such an elgebra $\A$. 
Then $\A^+$  is completely representable, a classical result of Monk's \cite{HH} and $\A$ embeds into $\A^+$.
For pseudo--elementarity one proceeds like the relation algebra case \cite[pp. 279--280]{HHbook} 
defining complete representability 
in a two--sorted theory, undergoing the obvious modifications.
For pseudo--elementarity  for the class ${\sf Nr}_n\CA_{\beta}$ for any $2<n<\beta$  one easily adapts \cite[Theorem 21]{r} by defining  ${\sf Nr}_n\CA_\beta$ 
in a two--sorted theory, when $1<n<\beta<\omega$, and a three--sorted one, when
$\beta=\omega$. The first part is easy.  For the second part; one uses a sort for a $\CA_n$
$(c)$, the second sort is for the Boolean reduct of a $\CA_n$ $(b)$
and the third sort for a set of dimensions $(\delta).$

For any infinite ordinal $\mu$, the defining theory for ${\sf Nr}_n\CA_{\mu}={\sf Nr}_n{\sf CA}_{\omega}$,
includes sentences requiring that the constants $i^{\delta}$ for $i<\omega$
are distinct and that the last two sorts define
a $\CA_\omega$. There is a function $I^b$ from sort $c$ to sort $b$ and sentences forcing  that $I^b$ is injective and
respects the $\CA_n$ operations. For example, for all $x^c$ and $i<n$,
$I^b({\sf c}_i x^c)= {\sf c}_i^b(I^b(x^c)).$ The last requirement is that $I^b$ maps {\it onto} the set of $n$--dimensional elements. This can  be easily expressed
via (*)
$$\forall y^b((\forall z^{\delta}(z^{\delta}\neq 0^{\delta},\ldots (n-1)^{\delta}\implies  c^b(z^{\delta}, y^b)=y^b))\iff \exists x^c(y^b=I^b(x^c))).$$
In all cases, it is clear that any algebra of the right type is the first sort of a model of this theory.
Conversely, a model for this theory will consist of  $\A\in \CA_n$  (sort $c$),
and a $\B\in \CA_{\omega}$;  the dimension of the last is the cardinality of
the $\delta$--sorted elements which is $\omega$, such that by (*) $\A=\Nr_n\B$.
Thus this three--sorted theory defines the class of neat reducts;
furthermore, it is clearly recursive. Recursive enumerability follows from \cite[Theorem 9.37]{HHbook}.

We have proved 
non--elementarity.

(3) For the last required fix finite $n>2$.
In \cite[Remark 31]{r} a relation atomic algebra $\R$ having uncountably many atoms 
is constructed such that $\R$ has an $\omega$--dimensional cylindric basis $H$ and $\R$ is not completely representable. If one takes $\C=\Ca(H)$, then $\C\in \CA_{\omega}$,
$\C$ is atomless, and $\R=\Ra\C$. The required $\CA_n$ is $\B=\Nr_n\C$; $\B$ is atomic and has uncountably many atoms. 
Furthermore, $\B$ has no complete representation for a complete representation of $\B$ induces one of $\R$.

We show that $\B$ is in  ${\bf El}\sf CRCA_n$. Since $\B\in {\sf Nr}_n\CA_{\omega}$, 
then  by lemma \ref{n}, \pe\ has a \ws\ in $G_{\omega}(\At\B)$, hence  \pe\ has a \ws\ in $G_k(\At\B)$ for all $k<\omega$.
Using ultrapowers and an elementary chain argument \cite[Corollary 3.3.5]{HHbook2}, we get
that $\B\equiv \D$, for some countable atomic $\D$, and \pe\ has a \ws\ in $G_{\omega}(\At\D)$. Since $\D$ is countable
then by \cite[Theorem 3.3.3]{HHbook2} it is completely representable. We have
proved that 
$\B\in {\bf El}{\sf CRK}_n$. Since $\B\notin {\sf CRCA}_n$, then ${\sf CRCA}_n$ is not elementary. 

For relation algebras we have $\R\in \sf Ra\CA_{\omega}$ and $\R$ has no complete representation. The rest is like the $\CA$ case, 
using \cite[Theorem 33]{r},  when the dilation is $\omega$--dimensional, 
namely, $\R\in \bold S_c\sf Ra\CA_{\omega}\implies$ \pe\ has a \ws\ in $F^{\omega}$  equivalently in $G_{\omega}$ 
(the last two games formulated for 
$\RA$s the former as in \cite[Definition 28]{r}).
\end{proof}

\section{Splitting in relation Monk--like algebras and the neat embedding problem}

The model--theoretic ideas used in the theorems \ref{can} and the construction in \cite{weak} are quite similar.
The model theory used for both constructions is almost identical.
Nevertheless,  from the algebraic point of view,
there is a crucial difference.
The non--representability of $\Cm\At\A$ was tested by a game between the two players \pa\ and \pe.
The \ws's of the two players are independent, this is  reflected by the fact
that we have  two `independent parameters' $\sf G$ (the greens)  and $\sf R$ (the reds) that are 
more were finite irreflexive complete graphs. 
In  Monk--like constructions like the one used in \cite{weak} to show that $\RCA_n$ is not atom--canonical by constructing a countable atomic $\A$
the non represenatbility of $\Cm\At\A$ was also tested by a game between \pe\ and \pa. 
But in {\it op.cit}  \ws's are interlinked, one operates through the other; hence only one parameter is the source of colours.

This parameter in \cite{weak} is a graph $\GGG$ (a countable disjoint unon of $N$ cliques $n(n-1)/2\leq N<\omega$). Representability of the complex algebra $\Cm\At\A$ in
this case depends only on the {\it chromatic number of $\GGG$}, via an application of Ramseys' theorem.
{\it In both cases two players operate using `cardinality of a labelled graph'.
\pa\ trying to make this graph too large for \pe\ to cope, colouring some of
its edges suitably.} 

For the rainbow case, it is a red clique formed during the play as we have seen in theorem \ref{can} 
It might be clear in both cases (rainbow and Monk--like algebras),  to see that \pe\ cannot win the infinite game, but what might not
be clear is {\it when does this happens; we know it eventually happen in finitely many round, but till which round \pe\ has not lost yet}.
The non--representability of $\Cm\At\A$ amounts  to that $\Cm\At\A\notin \bold S{\sf Nr}_n\CA_{n+k}$ for some finite $k$ because
$\RCA_n=\bigcap_{k<\omega}\bold S{\sf Nr}_n\CA_{n+k}$. Can we {\bf pin down} the value of $k$? 
getting an estimate that is not `infinitely' loose. 
{\it By adjusting the number of greens in the proof of theorem \ref{can} to be $n+1$ 
one gets a finer result than Hodkinson's \cite{Hodkinson} where Hodkinson uses an `overkill' of infinitely many greens. 
By truncating  the greens to be $n+1$, 
we could tell when $\bold S{\sf Nr}_n\CA_{n+k}$, 
$3\leq k\leq \omega$ {\bf `stops to be atom--canonical'; when $\Cm\At\A$ {\bf `stopped to be representable'}}}

{\bf But sometimes Monk--like algebras, without a rainbow intervention using the independent parameter (of greens) $\sf G$ are  
efficient  in controlling `the number of spare dimensions needed' in a certain construction.
The relation algebras to be constructed next are an example. Though such 
algebras have  huge joins and meets arising from a {\it splitting of a finite set of atoms each into a 
finite set of subatoms}, here the cylindrifiers arising from a cylindric basis that such algebras happen to posses, are defined in such  way to be 
{\it correlated directly with the colours}. This way requires only looking at `enough objects' to see 
if the algebras posseses  
a cylindric basis or not (which it does).
In such a relation algebra (denoted below by $\R$, consider a top point that is connected by coloured edges to 
some of the intermediate points that are all connected to a bottom elements. 
The number of points (in this figure) is the number of colours plus one. 

So one gets the same control as rainbow algebras provided by (the second 
independent parameter) $\sf G$. The key idea here 
is that the proof of Ramsey in this context does not require an uncontrollable Ramsey number of spare dimensions, 
but only one more than the number of 
colours  used.}
Let us elaborate some more giving some but not all of the details:

In \cite{t}, the famous {\it Neat embedding Problem}, posed as \cite[Problem 2.12]{HMT2} for $\CA$s
is approached for diagonal free cylindric algebras $\Df$s, Pinter's substitution algebras $\Sc$s, polyadic algebras $\PA$s, quasi--polyadic algebras $\QA$,
$\PA$s with equality $\PEA$s, and $\QA$s with equality $\QEA$s
It is proved in {\it op. cit} that for any class $\K$ between $\Sc$ and $\QEA$, for any positive $k$,  and for any ordinal $\alpha>2$, 
the class $\bold S{\sf Nr}_\alpha\K_{\alpha+k+1}$ is not axiomatizable by a finite schema over 
$\bold S{\sf Nr}_n\K_{\alpha+k}$.  We strengthen this result when $\alpha\geq \omega$ and when we have diagonal
elements, namely, for any class $\K$ between $\CA_{\alpha}$ and $\QEA_{\alpha}$.

The construction used in this subsection is based on relation algebras.
Here we split atoms in a finite relation algebra each to finitely many subatoms to get a certain finite relation 
$\R\in \bold S\Ra\CA_n\sim \bold S\Ra\CA_{n+1}$. 

A typical such relation algebra $\R$ is a finite relation algebra with $n+1$ `special elements' for some finite $n>3$. 
Composition is defined so that any two distinct elements in the base of any representation of $\R$ (if there is one) are related by one
of these special elements, but no three elements are related by the same special element.

There is a finite bound $m$ say on the number  of elements in the base of a representation 
such that any colouring of the edges of a complete graph of size 
larger than $m$ and using only $n+1$ colours is bound to have a monochromatic triangle. 

Furthermore,  a represenation of $\R$ enforces 
that the base of the representation has to have 
more than $m$ objects.  This can be demonstrated  by a \ws\ of \pa\ in an infinite
(hyperbasis) game, allowing amalgamation moves, where the approximations to
the genuine (potential) representations, built during the game, are of bounded size. 
But one can prove that \pe\ has a \ws\ 
in this hypebasis game in $n+1$ rounds, implying that $\R\notin \bold S\Ra\CA_{n+1}$, 
so that it does not even have an $n+1$--flat 
represenation. But it will have an $n$--flat one.

Now we give the idea in some (but not full) 
detail below.

One can construct a $\CA_l$ in a natural way from an $l$--dimensional cylindric basis \cite[Definition 12.7]{HHbook}. 
(The construction will be given in a moment.)
For an atomic  relation algebra $\R$ and $l>3$, recall that we denote by $\sf Mat_l(\At\R)$ the set of all $l$--dimensioana basic matrices on $\R$.
Sometimes ${\sf Mat}_l(\At\R)$ is an $l$--dimensional cylindric basis constituting the atom structure of a $\CA_l$, 
but sometimes it is not. The relation algebra dealt with in our next theorem \ref{thm:cmnr} (denoted by $\R$)
is of the first kind. 

For $\tau:l\to l$ we write $(f\tau)$ for the function defined by
$(f\tau)(x, y)=f(\tau(x), \tau(y)).$ It is always the case that $f\tau\in {\sf Mat}_l(\At\R)$ for any $f\in \Mat_l(\At\R)$ and any $\tau:l\to l$,
so if ${\sf Mat}_l(\At\R)$ is an $l$--dimensional cylindric basis, then 
$\Cm{\sf Mat}_l(\At\R)$ can be expanded to a ${\sf QEA}_l$,  
by defining for $X\subseteq {\sf Mat}_l(\At\R)$ and transposition $\tau:l\to l$: 
$$\s_\tau(X)=\set{f\in {\sf Mat}_l(\At\R): f\tau \in X}.$$

The third item in our coming theorem \ref{thm:cmnr},
which is (the main theorem) \cite [Theorem 1.1]{t} is {\it strictly weaker} than the result 
used in proof of theorem \ref{2.12}, namely  (using the notation {\it op. cit}), 
that $\Pi_{r/U}\C(m, n, r)\in \RQEA_m$ (upon replacing $\C(m, n, r)$ by $\D(m, n, r)$.)

\begin{theorem}\label{thm:cmnr} Let $3\leq m\leq n$ and $r<\omega$.
\begin{enumerate}
\renewcommand{\theenumi}{\Roman{enumi}}
\item $\D(m, n, r)\in {\sf Nr}_m\sf QEA_n$,\label{en:one}
\item $\Rd_{\Sc}\D(m, n, r)\not\in \bold S{\sf Nr}_m\Sc_{n+1}$, \label{en:two}
\item $\Pi_{r/U} \D(m, n, r)$ is elementarily equivalent to a polyadic equality algebra $\C\in{\sf Nr}_m\sf QEA_{n+1}$.  
\label{en:four}
\end{enumerate}
\end{theorem}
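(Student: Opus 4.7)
The plan is to realize $\D(m,n,r)$ concretely as a neat reduct coming from the matrix construction applied to a carefully designed finite Monk--like relation algebra, and then prove each of the three items in turn, with item (II) being the combinatorial heart of the argument.

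First I would construct the underlying relation algebra $\R=\R(m,n,r)$ as described informally at the end of the excerpt: start with a small finite relation algebra carrying $n+1$ ``colour'' atoms, and split each colour atom into $r$ subatoms, the composition being defined so that no three points in any prospective representation are all joined by copies of the same colour. The parameter $r$ controls how large a representation has to be and is the one that will be killed by the ultraproduct in item (III). One checks, exactly as in Maddux's three--dimensional construction recalled above, that ${\sf Mat}_n(\At\R)$ is an $n$--dimensional cylindric basis; the symmetric form of composition then lets us expand $\Cm{\sf Mat}_n(\At\R)$ to a $\QEA_n$ by defining, for a transposition $\tau:n\to n$, $\s_\tau(X)=\{f\in {\sf Mat}_n(\At\R):f\tau\in X\}$, just as indicated before the statement of the theorem. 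Finally I would \emph{define} $\D(m,n,r)$ to be the $\QEA_m$ obtained by taking $\Nr_m$ of this expanded $\QEA_n$. With this definition, item (I) is built into the construction.

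For item (II), I would run the usual Monk--style game argument on $\R$. Using the $n+1$ colour atoms, \pa\ plays, in $n+1$ rounds of the atomic hyperbasis game of \cite[Definition 12.26]{HHbook}, a sequence of moves forcing \pe\ to eventually label some edge with a triple of colour atoms whose composition is forbidden; this gives a \ws\ for \pa\ in the $n{+}1$--round game and hence, via \cite[Theorem 13.46]{HHbook}, $\R\notin\bold S\Ra\CA_{n+1}$. To lift this to the $\Sc_{n+1}$ statement for $\D(m,n,r)$, suppose for contradiction $\Rd_{\Sc}\D(m,n,r)\subseteq \mathfrak{Nr}_m\B$ for some $\B\in\Sc_{n+1}$. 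Using the embedding of $\R$ into ${\sf Ra}\Cm{\sf Mat}_n(\At\R)$ and the fact that $\D(m,n,r)$ generates this $n$--dimensional algebra in the appropriate sense, one would produce from $\B$ an $n+1$--dimensional dilation of $\R$ (in some suitable extended sense sufficient to refute the \ws\ of \pa\ after $n+1$ rounds), contradicting the game result. The main obstacle is precisely this lifting step: one must argue that an $\Sc_{n+1}$ dilation of the $m$--dimensional neat reduct suffices to produce the extra dimension that \pa\ needs in the hyperbasis game on $\R$, even though we only have Pinter's substitution signature rather than the full $\QEA$ signature in the dilation.

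For item (III), I would take a non--principal ultraproduct $\prod_{r/U}\D(m,n,r)$. The point is that in the ultraproduct, the splitting parameter $r$ becomes ``infinite'', so the Ramsey/colouring obstruction that prevented representability disappears: one can show, by a \ws\ for \pe\ in the $n+1$--round hyperbasis game on $\prod_{r/U}\R(m,n,r)$ (her strategy uses the unbounded supply of subatom copies to avoid monochromatic triangles for one more round than in the finite case), that this ultraproduct lies in $\bold S\Ra\CA_{n+1}$, and in fact that ${\sf Mat}_{n+1}$ of its atom structure forms an $(n{+}1)$--dimensional cylindric basis with polyadic expansion. Then $\prod_{r/U}\D(m,n,r)$ is elementarily equivalent (indeed equal, modulo the \L o\'s commutativity with $\Nr_m$ and $\sf Mat$) to $\mathfrak{Nr}_m$ of the resulting $\QEA_{n+1}$, which is the desired $\C$. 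The subtlety here is that $\Nr_m$ does not commute with ultraproducts on the nose, so one phrases the conclusion as ``elementarily equivalent to'' rather than ``equal to'', which is exactly what the statement of (III) requires.
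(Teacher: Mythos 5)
Your treatment of items (I) and (III) has roughly the right shape: the paper proves (I) by exhibiting the isomorphism $\D(m,n,r)\cong\mathfrak{Nr}_m\D(n,n,r)$ via $X\mapsto\{f\in F(n,n,r): f\restriction m\times m\in X\}$ (so defining $\D$ as a neat reduct is harmless, provided you check it agrees with $\Cm{\sf Mat}_m(\At\R)$), and it proves (III) by a \L o\'s argument giving $\Pi_{r/U}\D(m,n,r)\cong\D(m,n,\Pi_{r/U}r)$, where the second parameter is a \emph{linear order} whose ultraproduct contains an infinite ascending chain; setting $\psi(n,J)=\omega$ for infinite $J$ one gets $\D(m,n,J)\in{\bf El}{\sf Nr}_m\QEA_{n+1}$. (The qualifier ``elementarily equivalent'' enters because the infinite-order algebra lies only in the elementary closure of ${\sf Nr}_m\QEA_{n+1}$, not because $\Nr_m$ fails to commute with the ultraproduct.)

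The genuine gap is in item (II), which is the heart of the theorem, and it sits exactly where you flag it: the ``lifting step.'' The paper does not argue via $\bold S\Ra\CA_{n+1}$ or any hyperbasis game on $\R$ at all. It works directly inside the hypothetical dilation $\C\in\Sc_{n+1}$: from each member $s$ of a large set $S\subseteq\C$ one extracts atoms $\alpha(s,i,j)\in{\sf Bin}(n,r)$ by cylindrifying away all dimensions in $(n+1)\setminus\{i,j\}$ and substituting $i,j$ back to $0,1$; one then runs a rank-increasing induction on index functions $I:n\to(n-1)$ and $J:n\to r$, using that $I$ cannot be injective to free a spare dimension, and the fact that the rank is bounded by $(n-1)\times(r-1)$ to reach a contradiction after at most $nr$ steps. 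Two features of the construction exist solely to feed this induction and are absent from your sketch: the forbidden triples are not ``monochromatic triangles'' but $(a^k(i,j),a^{k'}(i,j),a^{k^*}(i,j'))$ with $j'\leq j$ --- the order condition on the second index is what makes the rank strictly increase --- and $\psi(n,r)=\kappa((n-1)r,(n-1)r)+1$ is chosen astronomically large precisely so that $S$ survives all $nr$ rounds. Your proposed detour through $\R\notin\bold S\Ra\CA_{n+1}$ would, even if the transfer could be made, at best yield non-membership for signatures containing diagonal elements, whereas (II) is stated for $\Sc_{n+1}$ precisely so that the result propagates to every signature between $\Sc$ and $\QEA$; and the paper explicitly cautions elsewhere that a relation algebra need not embed into the $\Ra$ reduct of cylindric algebras built over it, so the embedding you would need to refute \pa's win is not available in general. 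To repair your approach you would have to either reproduce the rank induction or prove a genuine transfer lemma from $\Sc_{n+1}$-dilations of $\D(m,n,r)$ down to $(n+1)$-dimensional relational bases for $\R$; neither is routine, and the paper opts for the former.
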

We define the algebras $\D(m,n,r)$ for $3\leq m\leq n<\omega$ and $r$
and then give a sketch of \eqref{en:two} given in detail in \cite[p. 211--215]{t}. 
We start with.
\begin{definition}\label{def:cmnr}
Define a function $\kappa:\omega\times\omega\rightarrow\omega$ by $\kappa(x, 0)=0$
(all $x<\omega$) and $\kappa(x, y+1)=1+x\times\kappa(x, y))$ (all $x, y<\omega$).
For $n, r<\omega$ let
\[\psi(n, r)=
\kappa((n-1)r, (n-1)r)+1.\]
This is to ensure that $\psi(n, r)$ is sufficiently big compared to $n, r$ for the proof 
of non-embeddability to work.
The second parameter $r<\omega$ may be considered as a finite linear order of length $r$.

For any  $n<\omega$ and any linear order $r$, let
\[{\sf Bin}(n, r)=\set{\Id}\cup\set{a^k(i, j):i< n-1,\;j\in r,\;k<\psi(n, r)}\]
where $\Id, a^k(i, j)$ are distinct objects indexed by $k, i, j$.
Let $3\leq m\leq n<\omega$ and let $r$ be any linear order.
Let $F(m, n, r)$ be the set of all  functions $f:m\times m\to {\sf Bin}(n, r)$
such that $f$ is symmetric ($f(x, y)=f(y, x)$ for all $x, y<m$)
and for all $x, y, z<m$ we have $f(x, x)=\Id,\;f(x, y)=f(y, x)$, and $(f(x, y), f(y, z), f(x, z))\not\in {\sf Forb}$,
where ${\sf Forb}$ (the \emph{forbidden} triples) is the following set of triples
 \[ \begin{array}{c}
 \set{(\Id, b, c):b\neq c\in {\sf Bin}(n, r)}\\
 \cup \\
 \set{(a^k(i, j), a^{k'}(i,j), a^{k^*}(i, j')): k, k', k^*< \psi(n, r), \;i<n-1, \; j'\leq j\in r}.
 \end{array}\]
Here ${\sf Bin}(n,r)$ is essentially an atom structure of a finite relation relation
and ${\sf Forb}$ specifies its operations by the standard procedure of specifying forbidden triples \cite{HHbook}.
This atom structure defines a relation algebra $\R$;
that is very similar (but not identical) to $\A(n,r)$ defined in \cite[Definition 15.2]{HHbook2}, and used in the first part of the 
proof of the forthcoming theorem \ref{2.12}.
The set of $m$--basic matrices $F(m,n,r)$ on $\R$ is the universe 
of the $\QEA_m$ atom structure $\Mat_m(\At\R)$.

$\D(m,n,r)$ is defined to be the complex algebra of the $m$--dimensional 
atom structure $\Mat_m(\At\R)$, that is, 
$\D(m, n, r)=\Cm{\sf Mat}_m(\At\R)$. 
The accessibility relations corresponding to substitutions, cylindrifiers are defined, the usual way 
on $m$--dimensional matrices.
The universe of $\D(m, n, r)$ is the power set of $F(m, n, r)$ and the operators (lifting from the atom structure)
are (we use the notation in definition \ref{b}):
\begin{itemize}
\item  the Boolean operators $+, -$ are union and set complement,
\item  the diagonal $\diag xy=\set{f\in F(m, n, r):f(x, y)=\Id}$,
\item  the cylindrifier $\cyl x(X)=\set{f\in F(m, n, r): \exists g\in X\; f\equiv_xg }$ and
\item the polyadic $\s_\tau(X)=\set{f\in F(m, n, r): f\tau \in X}$,
\end{itemize}
for $x, y<m,\;  X\subseteq F(m, n, r)$ and  transposition $\tau:m\to m$.
 \end{definition}
\medskip
Unlike the algebras $\C(m,n,r)$ in the proof of theorem \ref{2.12}, 
the algebras $\D(m, n, r)$ are now finite.
It is not hard to see
that  $3\leq m,\; 2\leq n$ and $r<\omega$
the algebra $\D(m, n, r)$ satisfies all of the axioms defining $\QEA_m$
except, perhaps, the commutativity of cylindrifiers $\cyl x\cyl y(X)=\cyl y\cyl x(X)$, which it satisfies because
$F(m,n,r)$ is a symmetric cylindric basis, so that overlapping
matrices amalgamate.
Furthermore, if  $3\leq m\leq m'$ then $\C(m, n, r)\cong\Nr_m\C(m', n, r)$
via $$X\mapsto \set{f\in F(m', n, r): f\restr{m\times m}\in X}.$$

We give a sketch of proof of \ref{thm:cmnr}(\ref{en:two}), which is the heart and soul of the proof, and it is quite similar
to its $\CA$ analogue \cite[pp. 4.69-475]{HHbook}. 
We will also refer to the latter when the proofs overlap, or are very similar.
Assume hoping for a contradiction  that
$\Rd_{\Sc}\D(m, n, r)\subseteq\Nr_m\C$
for some $\C\in \Sc_{n+1}$, some finite $m, n, r$.

Then it can be shown inductively
that there must be a `large set' $S$ of distinct elements of $\C$,
satisfying certain inductive assumptions, which we outline next.
For each $s\in S$ and $i, j<n+2$ there is an element $\alpha(s, i, j)\in {\sf Bin}(n, r)$ obtained from $s$
by cylindrifying all dimensions in $(n+1)\setminus\set{i, j}$, then using substitutions to replace $i, j$ by $0, 1$.
Then one shows that $(\alpha(s, i, j), \alpha(s, j, k), \alpha(s, i, k))\not\in {\sf Forb}$.
The induction hypothesis say, most importantly, that $\cyl n(s)$ is constant, for $s\in S$,
and for $l<n$  there are fixed $i<n-1,\; j<r$ such that for all $s\in S$ we have $\alpha(s, l, n)\leq a(i, j)$.
This defines, like in the proof of theorem 15.8 in \cite{HHbook2} p.471, two functions $I:n\rightarrow (n-1),\; J:n\rightarrow r$
such that $\alpha(s, l, n)\leq a(I(l), J(l))$ for all $s\in S$.  The \emph{rank} ${\sf rk}(I, J)$ of $(I, J)$ (as defined in definition 15.9 in \cite{HHbook2}) is
the sum (over $i<n-1$) of the maximum $j$ with $I(l)=i,\; J(l)=j$ (some $l<n$) or $-1$ if there is no such $j$.

Next it is proved that there is a set $S'$ with index functions $(I', J')$, still relatively large
(large in terms of the number of times we need to repeat the induction step)
where the same induction hypotheses hold but where ${\sf rk}(I', J')>{\sf rk}(I, J)$.  (Cf. \cite{HHbook}, where for $t<nr$, $S'$ was denoted by $S_t$
and proof of property (6) in the induction hypothesis  on \cite[p.474]{HHbook}.)

By repeating this enough times (more than $nr$ times) we obtain a non-empty set $T$
with index functions of rank strictly greater than $(n-1)\times(r-1)$, an impossibility. 
(Cf. \cite{HHbook}, where for $t<nr$, $S'$ was denoted by $S_t$.)

We sketch the induction step.  Since $I$ cannot be injective there must be distinct $l_1, l_2<n$
such that $I(l_1)=I(l_2)$ and $J(l_1)\leq J(l_2)$.  We may use $l_1$ as a "spare dimension"
(changing the index functions on $l$ will not reduce the rank).
 Since $\cyl n(s)$ is constant, we may fix $s_0\in S$
and choose a new element $s'$ below $\cyl l s_0\cdot \sub n l\cyl  l s$,
with certain properties.  Let $S^*=\set{s': s\in S\setminus\set{s_0}}$.
We wish to re-establish the induction hypotheses for $S^*$, and many of these are simple to check.
Although suitable functions $I', J'$ may not exist on the whole of $S$, but $S$ remains
large enough to enable selecting a
subset $S'$ of $S^*$, still large in terms of the number of remaining times the induction step must be applied.
The required functions $I', J'$ now exist (for all but one value of $l<n$ the values $I'(l), J'(l)$ are determined by $I, J$,
for  one value of $l$ there are at most $(n-1)r$ possible values, hence on a large subset the choices agree).
Next it can be shown that $J'(l)\geq J(l)$ for all $l<n$.   Since
$$(\alpha(s, i, j), \alpha(s, j, k), \alpha(s, i, k))\not\in Forb$$
and by the definition of ${\sf Forb}$
either $\rng(I')$ properly extends $\rng(I)$ or there is $l<n$ such that $J'(l)>J(l)$, hence  ${\sf rk}(I', J')>{\sf rk} (I, J)$.

It is proved in \cite{t} that for any class $\K$ between $\sf Sc$ (Pinter's substitution algebras) 
and $\QEA$, for any positive $k$,  and for any ordinal $\alpha>2$, 
the class $\bold S{\sf Nr}_\alpha\K_{\alpha+k+1}$ is not axiomatizable by a finite schema over 
$\bold S{\sf Nr}_\alpha\K_{\alpha+k}$.  We strengthen this result when $\alpha\geq \omega$ in the availability of the 
diagonal
constants, namely, for any class $\K$ between $\CA$ and $\QEA$ using the same {\it lifting technique} in \cite{t}.  
Before embarking on the theorem and its proof, we first need to recall some notation from \cite{HMT2}. 

Let $\K\in \{\CA, \QEA\}$.
\begin{enumerate}
\item Let $\alpha$ and $\beta$ be ordinals and let $\rho:\alpha\rightarrow \beta$ be an injection.
For any $\B\in \K_\beta$, $\Rd^\rho\B$ as defined in \cite[Definition 2.6.1]{HMT2} (extrapolated the obvious way to $\QEA$s) 
is in $\K_\alpha$. The universe of $\Rd^{\rho}\B$ is the same as the universe of $\B$ and 
the indices in the operations are re-shuffled along $\rho$, so for example for $b\in B$ and 
$i<\alpha$, ${\sf c}_i^{\Rd^{\rho}\B}b={\sf c}_{\rho(i)}^{\B}b$. If $\alpha\subseteq \beta$ and $\rho(i)=i$ for all $i\in \alpha$, then $\Rd^{\rho}\B$ is the 
standard (in the universal algebraic sense) reduct, denoted 
above by $\Rd_{\alpha}\B(\in \K_{\alpha}$).  

\item For an ordinal $\alpha$ and $\B\in\K_\alpha$ and $x\in \B$, $\Rl_x\B$ obtained by {\it relativizing} 
$x$ has elements of $\B$ below $x$ \cite[Definition 2.2.1]{HMT2} and extra non--Boolean operations are intersected with $x$.
It is not always the case that $\Rl_x\B$ is a $\K_{\alpha}$ (we can lose commutativity of cylindrifiers). 
However, if $x$ is {\it  rectangular}, in the sense that all $i<j<\alpha$,  ${\sf c}_ix\cdot {\sf c}_jx=x$, then $\Rl_x\B\in \K_\alpha$ \cite[Theorem 2.2.10]{HMT2}. 
This is used the following proof. 

\item  For an ordinal $\alpha$ and $\Gamma\subseteq \alpha$ we write $\Nr_\Gamma\A$ for the algebra whose universe 
is $\{a\in\A: i \in \alpha\setminus\Gamma\rightarrow \c_i a=a\}$ endowed with  the operators of $\A$ whose indicies 
are contained in $\Gamma$.  
\end{enumerate}

The following result is stronger than that obtained in \cite[Theorem 3.1]{t} when in the last referred to theorem, we consider only $\CA$s and $\QEA$s. 
The structure of the proof though is essentially the same, lifiting the analogous result from `finite dimensions' to the transfinite. 
We use different finite dimensional  algebras. Recall that the neat embedding theorem for $\QEA_{\alpha}$ says
that  $\sf RQEA_{\alpha}=\bold S\Nr_{\alpha}\QEA_{\alpha+\omega}.$

\begin{theorem} \label{2.12} Let $\alpha$ be any ordinal $>2$ possibly infinite.  Then for any $r\in \omega$, and finite 
$k\geq 1$, there exists $\A_r\in \bold S{\sf Nr}_{\alpha}\QEA_{\alpha+k}$
such that $\Rd_{ca}\A_r\notin \bold S{\sf Nr}_{\alpha}\CA_{\alpha+k+1}$
and $\Pi_{r/U}\A_r\in \sf RQEA_{\alpha}$ for any non--principal ultrafilter
$U$ on $\omega$. Thus for any variety $\K$ whose signature is between the signatures of $\CA$ and $\QEA$ and for any 
positive $k\geq 1$, the variety $\bold S\Nr_{\alpha}\K_{\alpha+k+1}$ is not axiomatizable by a finite schema 
over $\bold S\Nr_{\alpha}\K_{\alpha+k}$ and ${\sf RK}_{\alpha}$ is not axiomatizable by a finite schema over 
$\bold S\Nr_{\alpha}\K_{\alpha+m}$  for all $m>0$.
\end{theorem}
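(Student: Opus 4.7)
The plan is to lift the finite-dimensional result of Theorem \ref{thm:cmnr} to arbitrary ordinal dimension $\alpha > 2$, using the same lifting technique employed in \cite{t}. Fix $m = 3$ (or any finite $m \geq 3$) and set $n = m + k$. From Theorem \ref{thm:cmnr} we already have $\D(m, n, r) \in \Nr_m\QEA_n$ whose $\Sc$--reduct lies outside $\bold S\Nr_m\Sc_{n+1}$ (and so outside $\bold S\Nr_m\CA_{n+1}$), and whose ultraproduct modulo any non-principal $U$ is elementarily equivalent to a member of $\Nr_m\QEA_{n+1}$. The key is to transplant this failure into dimension $\alpha$ by extending the underlying basic-matrix combinatorics to ordinal-indexed matrices.

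First I would mimic Definition \ref{def:cmnr} verbatim in dimension $\alpha + k$: let $F_r$ be the set of all symmetric maps $f : (\alpha+k) \times (\alpha+k) \to {\sf Bin}(n, r)$ with $f(x,x) = \Id$ and $(f(x,y), f(y,z), f(x,z)) \notin {\sf Forb}$ for all $x, y, z < \alpha + k$, and let $\B_r = \Cm(F_r)$ carry the $\QEA_{\alpha+k}$ operations defined pointwise exactly as in the finite case. The $\QEA_{\alpha+k}$ axioms (including commutativity of cylindrifiers) transfer from the finite setting because the verifications are local, involving only finitely many indices at a time, and because $F_r$ remains a symmetric cylindric basis in the sense that overlapping ordinal-indexed matrices still amalgamate. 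I then set $\A_r = \Nr_\alpha\B_r \in \Nr_\alpha\QEA_{\alpha+k} \subseteq \bold S\Nr_\alpha\QEA_{\alpha+k}$.

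The main obstacle is proving $\Rd_{ca}\A_r \notin \bold S\Nr_\alpha\CA_{\alpha+k+1}$, which I would reduce to Theorem \ref{thm:cmnr}(II). Observe first that the restriction map $X \mapsto X^* = \{f \in F_r : f\restr{m \times m} \in X\}$ is a $\QEA_m$--embedding $\D(m,n,r) \hookrightarrow \B_r$ whose image lies in $\Nr_m\B_r$; in particular $X^* \in \A_r$ for each $X \in \D(m, n, r)$. Now assume for contradiction that $\Rd_{ca}\A_r \subseteq \Nr_\alpha\C$ with $\C \in \CA_{\alpha+k+1}$. Choose the injection $\rho : m+k+1 \to \alpha+k+1$ defined by $\rho(i) = i$ for $i < m$ and $\rho(m+j) = \alpha+j$ for $j \leq k$, and form $\Rd^\rho\C \in \CA_{m+k+1}$. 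Because $X^* \in \Nr_\alpha\C$ we have ${\sf c}_i^\C X^* = X^*$ for every $i \in (\alpha+k+1) \setminus \alpha$, hence ${\sf c}_{\rho(i)}^\C X^* = X^*$ for every $i \in (m+k+1) \setminus m$; that is, $X^* \in \Nr_m\Rd^\rho\C$. Since $\rho$ is the identity on $m$, the restriction embedding is $\CA_m$--compatible, so $\Rd_{ca}\D(m,n,r) \in \bold S\Nr_m\CA_{m+k+1} \subseteq \bold S\Nr_m\Sc_{m+k+1}$, contradicting Theorem \ref{thm:cmnr}(II). The delicate step is the precise choice of $\rho$: it must fix $m$ pointwise (so the Boolean, diagonal, and $m$-indexed cylindrifier structure survive the $\Rd^\rho$ reduct) while placing its remaining $k+1$ indices above $\alpha$ (so the witness to neatness in $\Nr_\alpha\C$ transfers to $\Nr_m\Rd^\rho\C$).

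For the ultraproduct and the finite-schema conclusions: neat reducts and cylindric reducts both commute with ultraproducts, so $\Pi_{r/U}\A_r \cong \Nr_\alpha\Pi_{r/U}\B_r$. A strengthening of Theorem \ref{thm:cmnr}(III)---in which the companion algebra $\C \in \Nr_m\QEA_{n+1}$ is chosen representable, so that $\Pi_{r/U}\D(m,n,r) \in \RQEA_m$ by elementarity of $\RQEA_m$---together with freely adjoining $\omega$ fresh dimensions to lift the representation to dimension $\alpha$, yields $\Pi_{r/U}\A_r \in \bold S\Nr_\alpha\QEA_{\alpha+\omega} = \RQEA_\alpha$. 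Fix now a variety $\K$ with $\CA \subseteq \K \subseteq \QEA$ and suppose $\Sigma$ were a finite schema axiomatizing $\bold S\Nr_\alpha\K_{\alpha+k+1}$ over $\bold S\Nr_\alpha\K_{\alpha+k}$. By \L o\'s's theorem, satisfaction of $\Sigma$ is preserved by ultraproducts; yet each $\K$--reduct of $\A_r$ lies in $\bold S\Nr_\alpha\K_{\alpha+k} \setminus \bold S\Nr_\alpha\K_{\alpha+k+1}$ (the second by the $\Rd_{ca}$ step, since taking $\CA$-reducts shows $\bold S\Nr_\alpha\K_{\alpha+k+1}$ sits above $\bold S\Nr_\alpha\CA_{\alpha+k+1}$), while the ultraproduct of these reducts is in $\RK_\alpha \subseteq \bold S\Nr_\alpha\K_{\alpha+k+1}$, contradiction. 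The final claim, that $\RK_\alpha$ is not finitely-schema axiomatizable over $\bold S\Nr_\alpha\K_{\alpha+m}$ for any $m > 0$, follows at once by applying the above with $k = m$ and using $\RK_\alpha \subseteq \bold S\Nr_\alpha\K_{\alpha+m+1}$.
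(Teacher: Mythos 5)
Your overall architecture (prove a finite-dimensional dichotomy, then lift to dimension $\alpha$) matches the paper's, but the finite-dimensional input you feed into the lift is the wrong one, and this creates a genuine gap at the representability claim $\Pi_{r/U}\A_r\in \RQEA_{\alpha}$. You base everything on the algebras $\D(m,n,r)=\Cm{\sf Mat}_m(\At\R)$ of Theorem \ref{thm:cmnr}, whose third item only yields that $\Pi_{r/U}\D(m,n,r)$ is \emph{elementarily equivalent to} some $\C\in {\sf Nr}_m\QEA_{n+1}$. Membership in ${\sf Nr}_m\QEA_{n+1}$ for finite $n+1$ is far weaker than representability, so your parenthetical ``strengthening \ldots in which the companion algebra $\C$ is chosen representable'' is precisely the missing content, not a routine refinement: the paper explicitly warns, right after Theorem \ref{2.12}, that it does \emph{not} guarantee that $\Pi_{r/U}\D(m,n,r)$ is representable, and that item (III) of Theorem \ref{thm:cmnr} is \emph{strictly weaker} than what the proof of Theorem \ref{2.12} needs. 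The paper's actual proof therefore abandons the $\D(m,n,r)$'s entirely and works with the different algebras $\C(m,n,r)=\Ca(H_m^{n+1}(\A(n,r),\omega))$ built from $n+1$--wide $\omega$--hypernetworks, for which all three facts --- $\C(m,n,r)\in{\sf Nr}_m\QEA_n$, $\Rd_{ca}\C(m,n,r)\notin\bold S{\sf Nr}_m\CA_{n+1}$, and crucially $\Pi_{r/U}\C(m,n,r)\in\RQEA_m$ --- are available by adapting the corresponding $\CA$ results in Hirsch--Hodkinson. Without that replacement your argument establishes the non-membership half but not the good ultraproduct, and the whole non-finite-schema conclusion collapses.

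There is a second, independent soft spot: your lift to dimension $\alpha$ proceeds by directly forming $(\alpha+k)$--indexed basic matrices $F_r$ and taking $\Cm(F_r)$, asserting that commutativity of cylindrifiers transfers because ``the verifications are local.'' They are not: the amalgamation property requires a single value for the new edge $h(x,y)$ that is simultaneously consistent with the triangles through \emph{every} $z$ in an infinite node set, and likewise the surjectivity of $X\mapsto X^*$ onto ${\mathfrak{Nr}}_m$ needs every finite matrix to extend to an $(\alpha+k)$--dimensional one; neither follows from the finite-dimensional verification. The paper sidesteps both issues by lifting through an ultraproduct $\Pi_{\Gamma/F}\C_{\Gamma}^r$ over the directed system of finite subsets $\Gamma\subseteq\alpha$, using the reducts $\Rd^{\rho_\Gamma}$ to tie each factor to a finite-dimensional $\C(|\Gamma|,|\Gamma|+k,r)$, and using the rectangularity of the elements $x_n$ together with the relativization isomorphism $I_n$ to push the non-neat-embeddability through the ultraproduct. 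If you want to salvage your write-up, replace $\D(m,n,r)$ by $\C(m,n,r)$ throughout and replace the direct ordinal-indexed matrix construction by this ultraproduct-over-finite-subsets lift.
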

\begin{proof} 

Though the idea used here is the same idea used in \cite[Theorem 3.1]{t},
the result that we lift for finite dimensions to the transfinite, is stronger than
that obtained for finite dimensions in \cite[Theorem 3.1]{t}. Consequently, 
the  result hitherto obtained for infinite dimensions is  also stronger than the result 
obtained in \cite[Theoren 3.1]{t} (when restricted to any
$\K$ between $\CA$ and $\QEA$).

Fix $2<m<n<\omega$. Let $\mathfrak{C}(m,n,r)$ be the algebra $\Ca(\bold H)$ where $\bold H=H_m^{n+1}(\A(n,r), \omega)),$
is the $\CA_m$ atom structure consisting of all $n+1$--wide $m$--dimensional
wide $\omega$ hypernetworks \cite[Definition 12.21]{HHbook}
on $\A(n,r)$  as defined in \cite[Definition 15.2]{HHbook}.   Then $\mathfrak{C}(m, n, r)\in \CA_m$, and it can be easily expanded
to a $\QEA_m$, since $\C(m, n, r)$ is `symmetric', in the sense  that it allows a polyadic equality 
expansion by defining substitution operations corresponding to transpositions.

This follows by observing that $\bold H$ is obviously symmetric in the following exact sense:  
For $\theta:m\to m$ and $N\in \bold H$, $N\theta\in \bold H,$
where $N\theta$ is defined by $(N\theta)(x, y)=N(\theta(x), \theta(y)).$ 
 Hence, the binary polyadic operations defined on the atom structure 
$\bold H$ the obvious way (by swapping co--ordinates) 
lifts to polyadic operations of its complex algebra $\mathfrak{C}(m, n, r)$. In more detail, for a transposition 
$\tau:m\to m$,  and $X\subseteq \bold H$,  define $\s_\tau(X)=\{N\in \bold H: N\tau \in X\}$.

Furthermore, for any $r\in \omega$ and $3\leq m\leq n<\omega$, $\C(m,n,r)\in {\sf Nr}_m{\sf QEA}_n$, $\Rd_{ca}\C(m,n,r)\notin {\bold  S}{\sf Nr}_m{\sf CA_{n+1}}$
and $\Pi_{r/U}\C(m,n,r)\in {\sf RQEA}_m$ by easily
adapting \cite[Corollaries 15.7, 5.10, Exercise 2, pp. 484, Remark 15.13]{HHbook}
to the $\QEA$ context.

Take
$$x_n=\{f\in H_n^{n+k+1}(\A(n,r), \omega); m\leq j<n\to \exists i<m, f(i,j)=\Id\}.$$
Then $x_n\in \C(n,n+k,r)$ and ${\sf c}_ix_n\cdot {\sf c}_jx_n=x_n$ for distinct $i, j<m$.
Furthermore (*),
$I_n:\C(m,m+k,r)\cong \Rl_{x_n}\Rd_m {\C}(n,n+k, r)$
via the map, defined for $S\subseteq H_m^{m+k+1}(\A(m+k,r), \omega)),$ by
$$I_n(S)=\{f\in H_n^{n+k+1}(\A(n,r), \omega):  f\upharpoonright {}^{\leq m+k+1}m\in S,$$
$$\forall j(m\leq j<n\to  \exists i<m,  f(i,j)=\Id)\}.$$
We have proved the (known) result for finite ordinals $>2$.

To lift the result to the transfinite,
we proceed like in \cite{t}, using the same lifting argument in {\it op.cit}.

Let $\alpha$ be an infinite ordinal. Let $I=\{\Gamma: \Gamma\subseteq \alpha,  |\Gamma|<\omega\}$.
For each $\Gamma\in I$, let $M_{\Gamma}=\{\Delta\in I: \Gamma\subseteq \Delta\}$,
and let $F$ be an ultrafilter on $I$ such that $\forall\Gamma\in I,\; M_{\Gamma}\in F$.
For each $\Gamma\in I$, let $\rho_{\Gamma}$
be an injective function from $|\Gamma|$ onto $\Gamma.$
Let ${\C}_{\Gamma}^r$ be an algebra similar to $\QEA_{\alpha}$ such that
$\Rd^{\rho_\Gamma}{\C}_{\Gamma}^r={\C}(|\Gamma|, |\Gamma|+k,r)$
and let
$\B^r=\Pi_{\Gamma/F\in I}\C_{\Gamma}^r.$
Then we have $\B^r\in \bold {\sf Nr}_\alpha\QEA_{\alpha+k}$ and
$\Rd_{ca}\B^r\not\in \bold S{\sf Nr}_\alpha\CA_{\alpha+k+1}$.
These can be proved exactly like the proof of the first two items in \cite[Theorem 3.1]{t}. The second part uses that the element $x_n$ is {\it $m$--rectangular} 
and {\it $m$--symmetric}, in the sense that 
for all  $i\neq j\in m$, ${\sf c}_ix_n\cdot {\sf c}_jx_n=x_n$ and ${\sf s}_i^jx_nx \cdot  {\sf s}_j^ix_n =x_n$ (This last two conditions are not entirely indepedent \cite{HMT2}). 
This is crucial to guarantee that in the algebra obtained after relativizing 
to  $x_n$, we do not lose commutativity of cylindrifiers.  The relativized algebra stays inside $\QEA_n$.

We know
from the finite dimensional case that $\Pi_{r/U}\Rd^{\rho_\Gamma}\C^r_\Gamma=\Pi_{r/U}\C(|\Gamma|, |\Gamma|+k, r) \subseteq \Nr_{|\Gamma|}\A_\Gamma$,
for some $\A_\Gamma\in\QEA_{|\Gamma|+\omega}=\QEA_{\omega}$.
Let $\lambda_\Gamma:\omega\rightarrow\alpha+\omega$
extend $\rho_\Gamma:|\Gamma|\rightarrow \Gamma \; (\subseteq\alpha)$ and satisfy
$\lambda_\Gamma(|\Gamma|+i)=\alpha+i$
for $i<\omega$.  Let $\F_\Gamma$ be a $\QEA_{\alpha+\omega}$ type algebra such that $\Rd^{\lambda_\Gamma}\F_\Gamma=\A_\Gamma$.
Then $\Pi_{\Gamma/F}\F_\Gamma\in\QEA_{\alpha+\omega}$, and we have proceeding like in the proof of item 3 in \cite[Theorem 3.1]{t}:

$\Pi_{r/U}\B^r=\Pi_{r/U}\Pi_{\Gamma/F}\C^r_\Gamma
\cong \Pi_{\Gamma/F}\Pi_{r/U}\C^r_\Gamma
\subseteq \Pi_{\Gamma/F}\Nr_{|\Gamma|}\A_\Gamma
=\Pi_{\Gamma/F}\Nr_{|\Gamma|}\Rd^{\lambda_\Gamma}\F_\Gamma
=\Nr_\alpha\Pi_{\Gamma/F}\F_\Gamma.$

But $\B=\Pi_{r/U}\B^r\in \bold S{\sf Nr}_{\alpha}\QEA_{\alpha+\omega}$
because $\F=\Pi_{\Gamma/F}\F_{\Gamma}\in \QEA_{\alpha+\omega}$
and $\B\subseteq \Nr_{\alpha}\F$, hence it is representable (here we use the neat embedding theorem).
The rest follows using a standard L\'os argument. 
\end{proof}

In the first part of the proof of theorem \ref{2.12}, we had $\Pi_{r/U}\C(m, n, r)\in \RQEA_m$.
As stated in the last item of teorem \ref{thm:cmnr}, we do not guarantee that the ultraproduct on $r$
of the $\D(m,n,r)$s ($2<m<n<\omega)$ is representable.
A standard L\"os argument shows that
$\Pi_{r/U}\C(m, n, r) \cong\C(m, n, \Pi_{r/U} r)$ and $\Pi_{r/U}r$
contains an infinite ascending sequence.
Here  one  extends the definition of $\psi$
by letting $\psi(n, r)=\omega,$ for any infinite linear order $r$.

The infinite algebra
$\D(m,n, J)\in {\bf El}{\sf Nr}_n\QEA_{n+1}$
when $J$ is the infinite linear order as above.
Since $\Pi_{r/U} r$ is such, then we get $\Pi_{r/U}\D(m, n, r)\in 
{\bf El}{\sf Nr}_m\QEA_{n+1}(\subseteq \bold S{\sf Nr}_m\QEA_{n+1}$), cf. \cite[pp.216-217]{t}.

This suffices to show that for any $\K$ having signature between $\Sc$ and $\QEA$, for any $2<m<\omega$, and for any positive $k$,
the variety $\bold S{\sf Nr}_m\K_{m+k+1}$ is not finitely axiomatizable over the variety  ${\bold S}{\sf Nr}_m{\sf K}_{m+k}$.
The result for $\K=\CA$ was obtained by Hirsch and Hodkinson, solving \cite [Problem 2.12]{HMT1} posed by Monk, 
which proved to be one of the most, if not {\it the most}, influential 
problem in the history of Tarskian algebraic 
logic.

\section{Rainbows versus splitting}

Splitting does not work for diagonal free algebras. 
Using a rianbow construction we now show that:

\begin{theorem}\label{main2} Let $2<n<\omega$. Then any variety between ${\sf RDf}_n$ and $\RQEA_n$ has no equational axiomatization 
using finitely many variables.
\end{theorem}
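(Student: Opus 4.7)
The plan is to imitate Andr\'eka's finite-variable splitting argument from theorems \ref{ss} and \ref{s}, but to replace the splitting step by a rainbow construction. The reason is precisely the one signalled at the beginning of the section: in Andr\'eka's approach, non-representability of the split algebra is witnessed by a term $\tau(x)$ using diagonal constants to ``count'' the atoms below a fixed element against the cardinality of a base component, and this counting collapses in the diagonal-free signature. Rainbows, by contrast, produce non-representability through a game-theoretic obstruction that is phrased entirely in terms of cylindrifiers and substitutions and therefore survives passage to the diagonal-free reduct (via lemma \ref{dfb}).

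Concretely, for each positive integer $k$, I would construct an algebra $\A_k$ of $\QEA_n$-type such that (i) $\Rd_{df}\A_k\notin{\sf RDf}_n$, so that $\A_k$ lies outside every variety $\V$ with ${\sf RDf}_n\subseteq\V\subseteq\RQEA_n$, and (ii) every subalgebra of $\A_k$ generated by at most $k$ elements is in $\RQEA_n$, so that $\A_k$ satisfies every $k$-variable equation valid in $\V$.  The witness for (i) would be obtained by mimicking the construction of theorem \ref{can}: start with a finite rainbow $\PEA_n$ in which the number of greens and reds is tuned to $k$, then blow up and blur by splitting each red atom into $\omega$ copies, and let $\A_k=\mathfrak{Cm}\At(\A)$ for the representable countable algebra $\A$ produced in that process. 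The rainbow $\A_{n+1,n}$ embeds into $\A_k$, \pa\ has a \ws\ in the graph game on $\At(\A_{n+1,n})$ using $n+3$ nodes, and hence by lemmas \ref{Thm:n} and \ref{dfb} the $\Rd_{df}$-reduct of $\A_k$ is not in ${\sf RDf}_n$. Once both (i) and (ii) are established for every $k$, an equational axiomatization of $\V$ using at most $k$ variables is ruled out by the standard observation that an equation in $k$ variables holds in an algebra iff it holds in every $k$-generated subalgebra.

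The main obstacle is the uniform control over $k$-generated subalgebras demanded by (ii). The non-representability of $\A_k$ is a global phenomenon coming from the infinitely many red copies produced by splitting, whereas a $k$-generated subalgebra of $\A_k$ only involves finitely many atoms and hence only finitely many of these copies; intuitively, it cannot witness the global obstruction. Making this precise requires, for each choice of $k$ generators, exhibiting an honest representation of the subalgebra they generate, in the spirit of the blur representation of $\Tm\At\A$ in theorem \ref{ANT}: one uses the finitely many principal ultrafilters corresponding to the atoms touched by the subalgebra, together with the (finitely many) non-principal blur ultrafilters, to produce a coloured-graph model in which the finite family of constraints imposed by the $\QEA_n$-operations on the $k$ generators can be simultaneously satisfied. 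The crux is to show that the combinatorial constraints generated in this way are always jointly consistent; the number of greens and reds in the underlying finite rainbow, and the number of copies produced in the blow-up, must be chosen in a $k$-dependent way so that the blurs leave enough room to resolve every constraint produced by the $k$ generators.  This is where most of the technical work will lie, and where one must be careful that passing to the diagonal-free reduct does not destroy the representation produced.
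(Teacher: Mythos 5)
Your diagnosis of why Andr\'eka-style splitting cannot reach the diagonal-free case is exactly right (the counting term $\tau$ needs diagonal constants), and rainbows are indeed the correct replacement; but the witness you propose cannot work, and the paper's argument is structurally quite different. The fatal problem is your condition (ii). The complex algebra $\Cm\At\A$ produced by the blow-up-and-blur of theorem \ref{can} is non-representable precisely because the finite rainbow algebra $\A_{n+1,n}$ embeds into it. A finite algebra with $N$ atoms is generated, already as a Boolean algebra, by $\lceil\log_2 N\rceil$ elements (separate the atoms by bit patterns and recover each atom as a meet of generators and their complements), so $\Cm\At\A$ has a $k_0$-generated non-representable subalgebra for some $k_0$ depending only on $n$. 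Hence for every $k\geq k_0$ condition (ii) fails, and since equations are inherited by subalgebras, your route to ``$\A_k$ satisfies every $k$-variable equation valid in $\RQEA_n$'' is blocked: you would have to show directly that the finite non-representable algebra $\A_{n+1,n}$ (and every other $k$-generated subalgebra) validates all such equations, which is essentially the theorem you are trying to prove. Tuning the number of greens, reds and copies to $k$ does not obviously repair this, because \emph{whatever} finite non-representable algebra you embed into the complex algebra to witness its non-representability is itself a finitely generated non-representable subalgebra.

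What the paper actually does avoids subalgebra control entirely and uses no splitting or blow-up. For each $m$ it takes two \emph{finite} rainbow polyadic equality algebras $\A=\A_{\lambda+2,\beta}$ and $\B=\A_{\lambda+2,\lambda}$ with $\lambda=(n\times 2^m)^3$ and $\beta=[(\lambda+1)\times(\lambda+2)]/2$, identical except for the number of red atoms. With only $\lambda$ reds, \pa\ wins the graph game on $\At\B$ in $\lambda+2$ rounds by bombarding \pe\ with cones of distinct green tints, so $\B\notin\RQEA_n$; and since $\B$ is generated by $\{b\in\B:\Delta b\neq n\}$, \cite[Theorem 5.4.26]{HMT2} pushes this down to $\Rd_{df}\B\notin{\sf RDf}_n$, which is how the diagonal-free obstruction is obtained (your lemma \ref{dfb} route is in the same spirit but is not what is used here). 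With $\beta$ reds, \pe\ can give every edge of any red clique a label unique to that edge, so she wins $G_{\omega}(\At\A)$ and $\A\in\RQEA_n$. The key step is then a purely combinatorial lemma: any $m$-variable equation holds in the $\K$-reduct of $\A$ iff it holds in that of $\B$. This is proved by partitioning the red atoms of $\A$ into $2^m$ blocks according to which of the $m$ assigned generators they lie below, building a matching partition of the (fewer, but still at least $n\times 2^m$ many) reds of $\B$ that preserves cardinalities up to the threshold $n$, and transferring the falsifying assignment. Since one algebra is in the variety and the other is not, no $m$-variable equational theory can axiomatize it. Your scheme, if it could be carried out, would prove the stronger statement about universal formulas in $k$ variables; the paper only gets that strengthening under an additional simplicity/discriminator hypothesis, and for the stated equational result the two-algebra indistinguishability argument is both shorter and actually available.
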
 

By a class  $\K$ {\it between ${\sf RDf}_n$ and ${\sf RQEA}_n$} we mean that the signature of $t$ of $\K$ 
is between the signatures of ${\sf Df}_n$ and ${\sf QEA}_n$
and if $\A\in \K$ then their exists $\B\in {\sf RQEA}_n$ such that $\A\subseteq \Rd_t\B$, 
where $\Rd_t$ is the operation of discarding all the quasi--polyadic equality operations not in $t$. 
Furthermore, for all $\C\in \RQEA_n$, $\Rd_t\C\in \K$.

The result for any variety between $\RCA_n$ and $\RQEA_n$ (defined as above) is known \cite{Andreka}, though here we give an entirely different proof.
The result for diagonal free algebras is new, and it answers a question of Sain and Thompson 
formulated back in $1990$ \cite{ST}. Sain and Thompson proved only 
non--finite axiomatizability, which is a weaker result. In principal there can be 
infinite axiomatizations using only finitely many variables.

Throughout this subsection $n$ is fixed to be a finite ordinal $>2$. 
We deal with finite rainbow algebras of dimension $n$. Let $\kappa, \mu$ be finite ordinals $>0$.
The colours we use are:
\begin{itemize}

\item greens: $\g_i$ ($1\leq i\leq n-2)\cup \{\g^0_i: i\in \mu\},$
\item whites : $\w_i, i <n-1,$
\item reds:  $\r_{i}$, $i\in \kappa,$

\item shades of yellow : $\y_S: S\subseteq \mu.$
\end{itemize}

And coloured graphs are:

\begin{enumarab}

\item $M$ is a complete graph.

\item $M$ contains no triangles (called forbidden triples)
of the following types:

\vspace{-.2in}
\begin{eqnarray*}
&&\nonumber\\
(\g, \g^{'}, \g^{*}), (\g_i, \g_{i}, \w_i)
&&\mbox{any }1\leq i\leq n-1\  \\
(\g^j_0, \g^k_0, \w_0)&&\mbox{ any } j, k\in \mu\\
(\r_{i}, \r_{i}, \r_{j}) && i,j\in \kappa. \\
\end{eqnarray*}
and no other triple of atoms is forbidden. The part dealing with shades of yellos and cones is eactly like \cite{HH} (this does not involve any reds).
\end{enumarab}
We denote the complex rainbow polyadic equality (finite) algebra whose atom structure consists of 
(equivalence classes of ) surjections $a:n\to \Gamma$, $\Gamma$ a coloured graph
labelled by the above  specified  colours, by $\A_{\mu,\kappa}$.

Fix finite $m>1$. Let $\lambda=(n\times 2^m)^3$ and $\beta=[(\lambda+1)\times (\lambda+2)]/2.$
Let $\A=\A_{\lambda+2,\beta}$ and $\B=\A_{\lambda+2,\lambda}$. Recall that the dimension is fixed to be $n$ with $2<n<\omega$, 
so that $\A, \B\in {\sf QEA}_n$. The construction of $\A$ and $\B$ of course depends on $m$.
For rainbow algebras we use the graph version of the game $G_{\omega}$ 
played on coloured graphs \cite{HH}.

\begin{theorem}
\begin{enumarab}
\item \pa\ has a \ws\  in $G_{\lambda+2}(\At\B)$. 
\item \pe\ has a \ws\ for $G_{\omega}(\At\A)$.  
\end{enumarab}
\end{theorem}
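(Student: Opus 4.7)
The plan is to prove both items by the rainbow translation between the atomic game $G_k(\At\A_{\mu,\kappa})$ and a private \ef\ forth game on the colour sets $\mu$ and $\kappa$, in the spirit of \cite[Theorem 18.5]{HHbook}. The relevant sizes here are $|\mu|=\lambda+2$ in both $\A$ and $\B$, with $|\kappa|=\lambda$ for $\B$ and $|\kappa|=\beta=\binom{\lambda+2}{2}$ for $\A$, chosen precisely so that $\B$ is one red short of what \pe\ would need to properly colour the apex clique that \pa\ can force in $\lambda+2$ rounds, while $\A$ has plenty to spare.

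For the first item I would describe an explicit cone--bombardment strategy for \pa. After an opening move fixing a face $F=(x_0,\ldots,x_{n-2})$ in the initial network, in each of the following rounds \pa\ picks a fresh green tint $i\in\mu$ and makes a cylindrifier move forcing a new apex $u_i$ to form an $i$--cone over $F$. Inspection of the forbidden triples shows that every edge $u_iu_j$ with $i\neq j$ must be coloured red: a green label creates a forbidden all--green triangle, $\w_0$ creates the forbidden $(\g_0^i,\g_0^j,\w_0)$ at $x_0$, and $\w_k$ for $k\geq 1$ creates the forbidden $(\g_k,\g_k,\w_k)$ at $x_k$. Thus the $\lambda+2$ apexes form a red clique, and the forbidden pattern $(\r_i,\r_i,\r_j)$ requires this labelling to be a proper edge colouring of $K_{\lambda+2}$. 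Since $\chi'(K_{\lambda+2})\geq\lambda+1$ but only $\lambda$ reds are available in $\B$, pigeonhole forces two incident edges to share a red index, producing a forbidden triple; so \pe\ cannot respond legally and \pa\ wins in $\lambda+2$ rounds.

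For the second item I would appeal to the rainbow theorem, reducing \pe's \ws\ in $G_\omega(\At\A)$ to a \ws\ in ${\sf EF}_\omega^\omega(\mu,\kappa)$ on the colour sets. Since neither set carries additional relational structure and $|\kappa|\geq|\mu|$, \pe\ wins the \ef\ game trivially by maintaining a finite partial injection $\mu\hookrightarrow\kappa$ throughout all rounds. Concretely, fixing a bijection $\pi$ from $\kappa$ to the set of unordered pairs of distinct elements of $\mu$, the translated strategy over any face $F$ keeps at most one $i$--cone apex for each tint $i\in\mu$ (re--using the existing apex to answer every subsequent cylindrifier move requesting an $i$--cone over $F$) and labels the edge between the $i$-- and $j$--cone apexes by $\pi^{-1}(\{i,j\})$. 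This caps every face--apex clique at size $\lambda+2$ and properly edge--colours it with distinct reds, so every all--red triangle carries three distinct indices and the forbidden pattern $(\r_i,\r_i,\r_j)$ is avoided.

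The main obstacle will be extending this tint--per--face prescription coherently to the entire coloured graph built during $G_\omega$: different faces share nodes, and the yellow $(n-1)$--hyperlabels $\y_S$ must be assigned consistently at every $(n-1)$--hyperedge. I would carry out this bookkeeping exactly as in the standard translation from an \ef\ \ws\ to a rainbow \ws\ used in \cite[Theorem 18.5]{HHbook} and in \cite{HH}, verifying inductively at each round that the red labelling remains a proper edge colouring on every clique of cone apexes over a common face and that yellows are chosen by the $\y_S$--recipe of Hirsch--Hodkinson. Once this is in place the \ws\ persists across all of $G_\omega$, giving the required result.
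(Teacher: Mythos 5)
Your proposal is correct and follows essentially the same route as the paper: for (1), cone bombardment over a fixed base forces a red clique on the $\lambda+2$ apexes which cannot be consistently labelled with only $\lambda$ reds (the paper runs the pigeonhole on the star at a single apex, you run it via $\chi'(K_{\lambda+2})\geq \lambda+1$ -- same point), and for (2), capping the apex clique at $\lambda+2$ by keeping one apex per tint and giving each edge of the clique its own red, which works because $\beta=[(\lambda+1)(\lambda+2)]/2$ is exactly the number of edges of $K_{\lambda+2}$. One caveat: the preliminary appeal to the rainbow theorem and to ${\sf EF}_\omega^\omega(\mu,\kappa)$ won by a partial injection $\mu\hookrightarrow\kappa$ does not literally apply to $\A_{\mu,\kappa}$, whose reds are indexed by single elements of $\kappa$ with forbidden triple $({\sf r}_i,{\sf r}_i,{\sf r}_j)$ rather than by pairs from a second relational structure as in the standard rainbow set-up (and the relevant inequality is $|\kappa|\geq \binom{|\mu|}{2}$, not $|\kappa|\geq|\mu|$); this does no harm, however, since the concrete strategy you give afterwards -- the bijection $\pi$ from $\kappa$ onto unordered pairs of tints -- is self-contained and is precisely what the paper's \pe\ does.
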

\begin{proof}
We first show that \pa\ has a \ws\  in $G(\At\B)$  in $\lambda+2$ many rounds,  hence $\B\notin {\sf RQEA}_n$.
Since $\B$ is generated by the set $\{b\in \B: \Delta b\neq n\}$, then by \cite[Theorem 5.4.26]{HMT2}, 
it will follow that 
the diagonal free reduct $\Rd_{df}\B$ is not in ${\sf RDf}_n$.
Now \pa\ plays as follows: In his zeroth move, \pa\ plays a graph $\Gamma$ with
nodes $0, 1,\ldots, n-1$ and such that $\Gamma(i, j) = \w_0 (i < j <
n-1), \Gamma(i, n-1) = \g_i ( i = 1,\ldots, n-2), \Gamma(0, n-1) =
\g^0_0$, and $ \Gamma(0, 1,\ldots, n-2) = \y_{\lambda+2}$. This is a $0$-cone
with base $\{0,\ldots, n-2\}$. In the following moves, \pa\
repeatedly chooses the face $(0, 1,\ldots, n-2)$ and demands a node
$t$ with $\Phi(i,t) = \g_i$, $(i=1,\ldots, n-2)$ and $\Phi(0, t) = \g^t_0$,
in the graph notation -- i.e., an $t$-cone, $t\leq \lambda+2$,  on the same base.
\pe\ among other things, has to colour all the edges
connecting $\lambda+2$ nodes $n_0, n_1, \ldots n_{\lambda+1}$ created by \pa\ as apexes of cones based on the face $(0,1,\ldots, n-2)$ by red labels. 
But there are only $\lambda$ red labels, so there must be $0<i\neq j<\lambda+2$, such that in the last coloured graph 
$\Lambda$,  $\Lambda(n_0, n_i)=\Lambda(n_0, n_j)$. But
$(n_0, n_i, n_j)$ is inconsistent, so \pa\ wins.
The conclusion now follows.

But we claim that \pe\ has a \ws\ in $G_{\omega}(\At\A)$, hence $\A\in {\sf RQEA}_n.$
If \pa\ plays like before, now that \pe\ has more red labels, \pa\ cannot force a win. In fact, \pa\
can only force a red clique (a coloured graph in which every edge has a red label), of size of at most $\lambda+2$ indexed by 
$\{\g_0^i: i<\lambda+2\}$ not bigger. So \pe\ s
strategy within such red cliques is to choose a label for each edge using
a red colour and to ensure that each edge within the clique has a label unique to this edge.
Since there are $[(\lambda+1)\times (\lambda+2)]/2$ many red labels to choose from,  she can do that.
\end{proof}
We have shown that $\A\in {\RQEA}_n$ and $\Rd_{df}\B\notin {\sf RDf}_n$. 
Next we show that $m$--variable equations cannot witness this dichotomy.  Call a coloured graph {\it red}, if at least one of its edges are labelled red.
We write $\r$ for $a:n\to \Gamma$, where $\Gamma$ is a red graph, and we call it a red atom
Here we identify an atom with its representative, but no harm will ensue.
For a class $\K$ between ${\sf RDf}_n$ and $\RQEA_n$ and $\C\in {\sf RQEA}_n$, we write $\Rd_{\K}\C(\in \K)$ for the algebra obtained from $\C$ by discarding the operations 
in the signature of ${\sf QEA}_n$ that are outside the signature of  
$\K$.

\begin{theorem} Assume that $2<n<\omega$, $1<m<\omega$, and that $\sf K$ is 
any variety between ${\sf RDf}_n$ and ${\sf RQEA}_n$.  
Then for any $m$ variable equation $e$ in the signature of $\sf K$, $\Rd_{\sf K}\A\models e\iff \Rd_{\sf K}\B\models e$.
\end{theorem}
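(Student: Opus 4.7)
The strategy is to show that $\A$ and $\B$ have the same $m$-variable $\QEA_n$-equational theory; since a term in the signature of any variety between ${\sf RDf}_n$ and ${\sf RQEA}_n$ is in particular a $\QEA_n$-term, the biconditional for such $\K$ follows. I will handle the two implications separately, one being comparatively easy and the other being where the parameter $\lambda=(n\cdot 2^m)^3$ plays its decisive role.

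For the easy direction, $\Rd_{\sf K}\A\models e\Rightarrow\Rd_{\sf K}\B\models e$, I would use the natural inclusion $\iota\colon\B\to\A$ at the level of atoms, induced by the inclusion of red index sets $\{0,\dots,\lambda-1\}\subseteq\{0,\dots,\beta-1\}$. By induction on the structure of an $m$-variable $\QEA_n$-term $t$, one proves the identity $\iota(t^{\B}(\bar b))=t^{\A}(\iota(\bar b))\cap\iota(B)$ for every $\bar b\in B^m$. The atomic and Boolean cases are routine; the cases for cylindrifiers, transpositions, and diagonals use that the atomic accessibility relations on $\A$ (which are defined purely in terms of vertex incidences) restrict to the corresponding relations on $\iota(B)$. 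Consequently, any $\bar b\in B^m$ with $s^{\B}(\bar b)\neq t^{\B}(\bar b)$ yields $\iota(s^{\B}(\bar b))\neq\iota(t^{\B}(\bar b))$, and intersecting with $\iota(B)$ forces $s^{\A}(\iota(\bar b))\neq t^{\A}(\iota(\bar b))$; so $\bar a=\iota(\bar b)$ witnesses the same inequation in $\A$.

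For the harder direction, $\Rd_{\sf K}\B\models e\Rightarrow\Rd_{\sf K}\A\models e$, I would compress an $\A$-witness to a $\B$-witness. Given $\bar a\in A^m$ with $s^{\A}(\bar a)\neq t^{\A}(\bar a)$, let $H\le\mathrm{Sym}(\beta)$ be the group of red-index permutations whose induced automorphism of $\A$ fixes each $a_i$ (equivalently, fixes $\Sg^{\A}\bar a$ pointwise). The natural quotient map $\Phi\colon\A\twoheadrightarrow\A/H$ is then a $\QEA_n$-homomorphism which is injective on $\Sg^{\A}\bar a$ (there all $H$-orbits are singletons), and its codomain is a rainbow algebra of the same shape with red-index set $\beta/H$. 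The combinatorial heart of the argument is the inequality $|\beta/H|\le\lambda=(n\cdot 2^m)^3$: the $H$-orbit of a red index is determined by its local profile inside atoms of $\Sg^{\A}\bar a$, and that profile is pinned down by three pieces of data — the edge position inside an at most $n$-vertex atom (factor $n$), the sign pattern among the $m$ generators (factor $2^m$), and the triangles it participates in (giving the cube, dictated by the forbidden triple $(\r_i,\r_i,\r_j)$). Once the bound is established, $\A/H$ embeds into $\A_{\lambda+2,\lambda}=\B$, and the image $\bar b=\Phi(\bar a)\in B^m$ supplies the sought $\B$-witness.

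The main obstacle is the orbit bound $|\beta/H|\le\lambda$. The subtlety is that a naive typing of red indices easily overshoots $\lambda$; to recover the cubic bound one must exploit the symmetry of the rainbow construction, observing that red permutations preserving the $a_i$ are plentiful enough to collapse the counting down to at most $(n\cdot 2^m)^3$ equivalence classes. A related technical point is that the compression must not produce forbidden triples of the form $(\r_i,\r_i,\r_j)$; this is precisely why reds meeting in a common atomic triangle must lie in distinct $H$-orbits, and why the cube in the definition of $\lambda$ is exactly what is needed to accommodate all such triple-incidence constraints.
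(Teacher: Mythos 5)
Your proposal does not follow the paper's argument, and the first half contains an error that is fatal on its own. Your ``easy direction'' asserts that the inclusion of atom structures $\At\B\subseteq\At\A$ (induced by $\lambda\subseteq\beta$ on the red indices) satisfies $\iota(t^{\B}(\bar b))=t^{\A}(\iota(\bar b))\cap\iota(B)$ for every term $t$, and you nowhere use the hypothesis that $t$ has at most $m$ variables. If that identity held for all terms, every equation valid in $\A$ would hold in $\B$; but $\A\in\RQEA_n$ while $\B\notin\RQEA_n$, and $\RQEA_n$ is a variety, so some equation valid in $\A$ fails in $\B$. The identity in fact breaks at the cylindrifier step of your induction: $\At\B$ is not closed under the relations $\equiv_i$ of $\At\A$ (an atom all of whose red labels have index $<\lambda$ is $\equiv_i$-related in $\A$ to atoms carrying reds with index in $\beta\sim\lambda$), so ${\sf c}_i^{\A}(Y)\cap\At\B$ can strictly contain ${\sf c}_i^{\B}(Y\cap\At\B)$ once $Y$ is no longer a union of ``old'' atoms, which happens as soon as a complement has been applied. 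In short, $\At\B$ is not an inner substructure of $\At\A$ and $\iota$ is not an embedding of complex algebras; there is no easy direction here, and both implications genuinely need the bound $m$.

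Your harder direction is closer in spirit to what is required but is not a proof: the orbit bound $|\beta/H|\le\lambda$, which you yourself flag as the main obstacle, is left as a heuristic, and the claim that $\At\A/H$ is again a rainbow atom structure on $\beta/H$ reds is doubtful, since an atom may carry several red edges, so its $H$-orbit is governed by the diagonal action of $H$ on tuples of red indices rather than by orbits of single indices, and the quotient need not be the atom structure of $\A_{\lambda+2,|\beta/H|}$. The paper avoids all of this and treats the two directions by one symmetric counting transfer of assignments: given $h$ falsifying $s=t$ in one algebra, partition its red atoms $\R$ into the $2^m$ blocks $\R_S=\{\r\in\R:\r\le h(x_i)\mbox{ for }i\in S,\ \r\cdot h(x_i)=0\mbox{ for }i\in m\sim S\}$, choose a partition $(\R'_S)_{S\subseteq m}$ of the red atoms of the other algebra with $|\R'_S|=|\R_S|$ when $|\R_S|<n$ and $|\R'_S|\ge n$ otherwise (possible because both algebras have at least $n\times 2^m$ red atoms), keep $h'$ equal to $h$ on the non-red atoms (which the two algebras share), and prove by induction on $m$-variable terms $\tau$ that $\R_S\subseteq h(\tau)\iff\R'_S\subseteq h'(\tau)$, that $h(\tau)$ and $h'(\tau)$ agree off the reds, and that the numbers of red atoms below them agree up to the threshold $n$. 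If you want to salvage your write-up, replace both halves by this argument.
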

\begin{proof}
Fix $m>1$. The proof is similar to \cite[Lemma 17.10]{HHbook}. We consider the $\K$ reducts of $\A$ and $\B$, which we continue to denote, with a slight abuse of notation, 
by $\A$ and $\B$.
Let $\R$ be the set of red atoms of $\A$, and $\R'$ be the set of red atoms in $\B$. Then $|\R|>|\R'|$ and $|\R'|\geq n\times 2^m$. 
To see why, take distinct $\r, \r', \r''\in [(\lambda+2)\times(\lambda+1)]/2$ such that $\r\notin \lambda$. 
Let $\Gamma$ be the coloured graph containing the triangle $(\r, \r', \r'')$ with other edges labelled by $\w_0$.
This avoids inconsistent triples. One labels 
the $n-1$ tuples by $\y_{\lambda+2}$. This maintains the consistency condition for labelling $n-1$ tuples by shades of yellow.  Then $\Gamma\in \R\sim \R'$. 
Every three element subset  of $\lambda= (n\times 2^m)^3$ gives at least one red $n$--coloured graph, such that for two distinct such sets, 
the resulting $n$-coloured graphs are distinct.   
As before, given such subset, one completes labelling of the edges by $\w_0$ ane labels the $n-1$ tuples by $\y_{\lambda+2}$.  
So $\R'$ has at least $r$ elements, where $r$ is the number of three element subsets of
$(n\times 2^m)^3$, so  $r\geq n\times 2^m$, hence $|\R'|\geq n\times 2^m$. 

We show that if $e$ is an $m$ variable equation that does not hold in $\A$, then it does not hold in $\B$. 
The converse is entirely analogous. 
Assume that the equation $s=t$  in the signature of $\K$ using $m$ variables does not hold in $\A$.
Then there is an assignment $h:\{x_0, \ldots, x_{m-1}\}\to \A$, such that $\A, h\models s\neq t$.
We construct an assignment $h'$ from $\{x_0, \ldots, x_{m-1}\}$ into $\B$ that also falsifies $s=t$.

Now $\A$ has more red atoms, but $\A$ and $\B$ have identical non-red atoms. So for any non red atom $a$ of $\B$, and for any
$i<m$, let $a\leq h'(x_i)$ $\iff$ $a\leq h(x_i)$.
To complete the definition of $h'$ it remains to identify which red--atoms of $\B$ (which are only a part of those of $\A$)
lie below $h'(x_i)$ ($i<m$). The element $h'(x_i)\in \B$ will be the (finite) sum of these atoms. 
For $S\subseteq m=\{0,1\ldots, m-1\}$, let
$$\R_S=\{\r\in \R: \r\leq h(x_i) , i\in S, \r\cdot h(x_i)=0, i\in m\sim S\}.$$
Then $\mathfrak{P}=\{\R_S: S\subseteq m\}$ forms a partition of $\R$ with possibly some empty blocks. 
If $\r\in \R$, one takes $S=\{i\in m: \r\leq h(x_i)\}$, then $\r\in \R_S$. If $\r\cdot h(x_i)=0$ for all $i<m$, 
then $S=\emptyset$ and $\r\in \R_{\emptyset}=\{r\in \R: \r\cdot h(x_i)=0, i\in m\}$.
Assume that $S_1, S_2\subseteq m$, and that $j\in S_1\sim S_2$. Let $\r\in \R_{S_2}$ then 
$\r\leq h(x_i)$ for all $i\in S_2$ and $\r\cdot h(x_i)=0$ for all $i\in m\sim S_2$. 
So $\r\cdot  h(x_j)=0$, thus $\r\notin \R_{S_1}$
W have shown that $\mathfrak{P}$ is a partition.

Next, partition  $\R'$ into $2^m$ parts $\R'_S$ for $S\subseteq m$ 
such that $|\R'_S|=|\R_S|$ if $|\R_S|<n$, and $|\R_S'|\geq n$ if $|\R_S|\geq n$.
This is possible because $|\R'|, |\R|\geq n\times 2^m$. 
Now for each $i<m$ and each red atom $\r'$ in $\R'$, we complete the definition of
$h'(x_i)\in \B$ by specifying the red atoms below it. Recall that 
$\R'=\bigcup_{S\subseteq m}\R'_S$. We set $\r'\leq h'(x_i)$ $\iff$ $\r'\in \R'_S$ for some $S$ such that $i\in S$.
Fix $S\subseteq m$ and $i<m$. 
Then by definition $\R_S\subseteq h(x_i)\iff  \R'_S\subseteq h'(x_i).$
The non red atoms below $h(x_i)$ are the same as the non red atoms below
$h(x_i)$, hence $h(x_i)\sim \R=h'(x_i)\sim \R'$. Assume that $|\R\cap h(x_i)|<n$.  Since $\R_S\cap h(x_i)=\emptyset$ if $i\notin S$, and ${\sf R}_S\subseteq h(x_i)$ 
if $i\in S$,  then $|\R\cap h(x_i)|=|\bigcup_{S\subseteq m} (\R_S\cap h(x_i))|= \sum_{S\subseteq m, i\in S}|\R_S\cap h(x_i)|=\sum_{S\subseteq m, i\in S}|\R_S|=
\sum _{S\subseteq m, i\in S}|\R'_S|=|\R'\cap h'(x_i)|$.
So the number of red atoms below $h(x_i)$ is the same as the number of red atoms below $h'(x_i)$ if this number is $<n$.
Now assume that $|\R\cap h(x_i)|\geq n$. Then using the same reasoning as above
we have $\sum_{S\subseteq m, i\in S}|\R_S|=\sum _{S\subseteq m, i\in S}|\R'_S|=|\R'\cap h'(x_i)|\geq n$.
So $|\R\cap h(x_i)| \geq n\iff |\R'\cap h'(x_i)|\geq n$.

It  can be shown inductively 
that for any term $\tau$ using only the first $m$ variables
and $S\subseteq m$, that: 
$\R_S\subseteq h(\tau)\iff  \R'_S\subseteq h'(\tau),$
$h(\tau)\sim \R = h'(\tau)\sim \R',$
$|\R\cap h(\tau)| =|\R'\cap h'(\tau)|\iff |\R\cap h(\tau)|<n,$
and $|\R\cap h(\tau)| \geq n\iff |\R\cap h'(\tau)|\geq n.$
Thus $h'$ falsifies $s=t$ in $\B$. .
\end{proof}

Now we are ready to prove theorem \ref{main2}.  

{\bf Proof of the theorem:} Let $\K$ be a variety 
as specified in the hypothesis.
If $\Sigma$ is any $m$ variable equational theory then the $\K$  reduct of $\A$ 
and $\B$ either both validate
$\Sigma$ or neither do. Since one algebra is in $\K$ while the other is not, it follows that $\Sigma$ does not axiomatize 
$\K$. We have proved the required.

Observe that if $\A$ and $\B$ are simple, then one can prove that $\K$ has no finite variable universal prenex axiomatization.   
To see why, the $\K$ reducts 
of $\A$ and $\B$ are subdirectly irreducible and in  a discriminator variety every universal prenex formula 
is equivalent in subdirectly irreducible members to an
equation using the same number of variables. 
By the previous corollary we are done.

\section{Appendix} 

{\bf Proof of (technical) lemma \ref{Thm:n}:}
\begin{proof}
First a piece of notation. Let $m$ be a finite ordinal $>0$. An $\sf s$ word is a finite string of substitutions $({\sf s}_i^j)$ $(i, j<m)$,
a $\sf c$ word is a finite string of cylindrifications $({\sf c}_i), i<m$;
an $\sf sc$ word $w$, is a finite string of both, namely, of substitutions and cylindrifications.
An $\sf sc$ word
induces a partial map $\hat{w}:m\to m$:
\begin{itemize}

\item $\hat{\epsilon}=Id,$

\item $\widehat{w_j^i}=\hat{w}\circ [i|j],$

\item $\widehat{w{\sf c}_i}= \hat{w}\upharpoonright(m\smallsetminus \{i\}).$

\end{itemize}
If $\bar a\in {}^{<m-1}m$, we write ${\sf s}_{\bar a}$, or
${\sf s}_{a_0\ldots a_{k-1}}$, where $k=|\bar a|$,
for an  arbitrary chosen $\sf sc$ word $w$
such that $\hat{w}=\bar a.$
Such a $w$  exists by \cite[Definition~5.23 ~Lemma 13.29]{HHbook}.

Fix $2<n<m$. Assume that $\C\in\Sc_m$, $\A\subseteq_c{\mathfrak Nr}_n\C$ is an
atomic $\Sc_n$ and $N$ is an $\A$--network with $\nodes(N)\subseteq m$. Define
$N^+\in\C$ by
\[N^+ =
 \prod_{i_0,\ldots, i_{n-1}\in\nodes(N)}{\sf s}_{i_0, \ldots, i_{n-1}}{}N(i_0,\ldots, i_{n-1})\]
with the substitution operator defined as above.
For a network $N$ and  function $\theta$,  the network
$N\theta$ is the complete labelled graph with nodes
$\theta^{-1}(\nodes(N))=\set{x\in\dom(\theta):\theta(x)\in\nodes(N)}$,
and labelling defined by
$$(N\theta)(i_0,\ldots, i_{n-1}) = N(\theta(i_0), \theta(i_1), \ldots,  \theta(i_{n-1})),$$
for $i_0, \ldots, i_{n-1}\in\theta^{-1}(\nodes(N))$.
We start with a sketch of proof of the first item which  uses ideas
in \cite[Lemma 29, 26, 27]{r} formulated for relation algebras.
We assume that the ${\sf s}_i^j$s for  $i< j<m$ are completely additive in $\C$. (This condition is superfluous for any $\K$ between $\CA$ and $\QEA$.)
Then the following hold:
\begin{enumerate}
\item for all $x\in\C\setminus\set0$ and all $i_0, \ldots, i_{n-1} < m$, there is $a\in\At\A$, such that
${\sf s}_{i_0,\ldots, i_{n-1}}a\;.\; x\neq 0$,

\item for any $x\in\C\setminus\set0$ and any
finite set $I\subseteq m$, there is a network $N$ such that
$\nodes(N)=I$ and $x\cdot N^+\neq 0$. Furthermore, for any networks $M, N$ if
$M^+\cdot N^+\neq 0$, then
$M\restr {\nodes(M)\cap\nodes(N)}=N\restr {\nodes(M)\cap\nodes(N)},$

\item if $\theta$ is any partial, finite map $m\to m$
and if $\nodes(N)$ is a proper subset of $m$,
then $N^+\neq 0\rightarrow {(N\theta)^+}\neq 0$. If $i\not\in\nodes(N),$ then ${\sf c}_iN^+=N^+$.

\end{enumerate}
We using the above facts  to show that \pe\  has a \ws\ in $F^m$ then we prove them. She can always
play a network $N$ with $\nodes(N)\subseteq m,$ such that
$N^+\neq 0$.
In the initial round, let \pa\ play $a\in \At\A$.
\pe\ plays a network $N$ with $N(0, \ldots, n-1)=a$. Then $N^+=a\neq 0$.
Recall that here \pa\ is offered only one (cylindrifier) move.
At a later stage, suppose \pa\ plays the cylindrifier move, which we denote by
$(N, \langle f_0, \ldots, f_{n-2}\rangle, k, b, l).$
He picks a previously played network $N$,  $f_i\in \nodes(N), \;l<n,  k\notin \{f_i: i<n-2\}$,
such that $b\leq {\sf c}_l N(f_0,\ldots,  f_{i-1}, x, f_{i+1}, \ldots, f_{n-2})$ and $N^+\neq 0$.
Let $\bar a=\langle f_0\ldots f_{i-1}, k, f_{i+1}, \ldots f_{n-2}\rangle.$
Then by  second part of  (3)  we have that ${\sf c}_lN^+\cdot {\sf s}_{\bar a}b\neq 0$
and so  by first part of (2), there is a network  $M$ such that
$M^+\cdot{\sf c}_{l}N^+\cdot {\sf s}_{\bar a}b\neq 0$.
Hence $M(f_0,\dots, f_{i-1}, k, f_{i-2}, \ldots$ $, f_{n-2})=b$,
$\nodes(M)=\nodes(N)\cup\set k$, and $M^+\neq 0$, so this property is maintained.
In \cite{AGMNS}  it is proved that for $\C\in \QA_m$, 
${\sf s}_0^1{}^{\C}$ is completely additive $\iff \C$ is completely additive. 
By complete additivity for $\CA$s and $\QEA$s, we get the second and 
third required and we are done.

Now we prove the facts. Since $\A\subseteq _c\Nr_n \C$, then $\sum^{\C}\At\A=1$. For (1), we have, by assumption,  ${\sf s}^i_j$ is a
completely additive operator (any $i, j<m$), hence ${\sf s}_{i_0,\ldots, i_{n-1}}$
is, too.
So $\sum^{\C}\set{{\sf s}_{i_0\ldots, i_{n-1}}a:a\in\At(\A)}={\sf s}_{i_0\ldots i_{n-1}}
\sum^{\C}\At\A={\sf s}_{i_0\ldots, i_{n-1}}1=1$ for any $i_0,\ldots, i_{n-1}<m$.  Let $x\in\C\setminus\set0$.  Assume for contradiction
that  ${\sf s}_{i_0\ldots, i_{n-1}}a\cdot x=0$ for all $a\in\At\A$. Then  $1-x$ will be
an upper bound for $\set{{\sf s}_{i_0\ldots i_{n-1}}a: a\in\At\A}.$
But this is impossible
because $\sum^{\C}\set{{\sf s}_{i_0\ldots, i_{n-1}}a :a\in\At\A}=1.$
To prove the first part of (2), we repeatedly use (1).
We define the edge labelling of $N$ one edge
at a time. Initially, no hyperedges are labelled.  Suppose
$E\subseteq\nodes(N)\times\nodes(N)\ldots  \times\nodes(N)$ is the set of labelled hyperedges of $
N$ (initially $E=\emptyset$) and
$x\;.\;\prod_{\bar c \in E}{\sf s}_{\bar c}N(\bar c)\neq 0$.  Pick $\bar d$ such that $\bar d\not\in E$.
Then by (1) there is $a\in\At(\A)$ such that
$x\;.\;\prod_{\bar c\in E}{\sf s}_{\bar c}N(\bar c)\;.\;{\sf s}_{\bar d}a\neq 0$.
Include the hyperedge $\bar d$ in $E$.  We keep on doing this until eventually  all hyperedges will be
labelled, so we obtain a completely labelled graph $N$ with $N^+\neq 0$.
it is easily checked that $N$ is a network.

For the second part of $(2)$, we proceed contrapositively. Assume that there is
$\bar c \in{}\nodes(M)\cap\nodes(N)$ such that $M(\bar c )\neq N(\bar c)$.
Since edges are labelled by atoms, we have $M(\bar c)\cdot N(\bar c)=0,$
so
$0={\sf s}_{\bar c}0={\sf s}_{\bar c}M(\bar c)\;.\; {\sf s}_{\bar c}N(\bar c)\geq M^+\cdot N^+$.
A piece of notation. For $i<m$, let $Id_{-i}$ be the partial map $\{(k,k): k\in m\smallsetminus\{i\}\}.$
For the first part of (3)
(cf. \cite[lemma~13.29]{HHbook} using the notation in {\it op.cit}), since there is
$k\in m\setminus\nodes(N)$, \/ $\theta$ can be
expressed as a product $\sigma_0\sigma_1\ldots\sigma_t$ of maps such
that, for $s\leq t$, we have either $\sigma_s=Id_{-i}$ for some $i<m$
or $\sigma_s=[i/j]$ for some $i, j<m$ and where
$i\not\in\nodes(N\sigma_0\ldots\sigma_{s-1})$.
But clearly  $(N Id_{-j})^+\geq N^+$ and if $i\not\in\nodes(N)$ and $j\in\nodes(N)$, then
$N^+\neq 0 \rightarrow {(N[i/j])}^+\neq 0$.
The required now follows.  The last part is straightforward.
Using the above proven facts,  we are now ready to show that \pe\  has a \ws\ in $F^m$. She can always
play a network $N$ with $\nodes(N)\subseteq m,$ such that
$N^+\neq 0$.\\
In the initial round, let \pa\ play $a\in \At\A$.
\pe\ plays a network $N$ with $N(0, \ldots, n-1)=a$. Then $N^+=a\neq 0$.
Recall that here \pa\ is offered only one (cylindrifier) move.
At a later stage, suppose \pa\ plays the cylindrifier move, which we denote by
$(N, \langle f_0, \ldots, f_{n-2}\rangle, k, b, l).$
He picks a previously played network $N$,  $f_i\in \nodes(N), \;l<n,  k\notin \{f_i: i<n-2\}$,
such that $b\leq {\sf c}_l N(f_0,\ldots,  f_{i-1}, x, f_{i+1}, \ldots, f_{n-2})$ and $N^+\neq 0$.
Let $\bar a=\langle f_0\ldots f_{i-1}, k, f_{i+1}, \ldots f_{n-2}\rangle.$
Then by  second part of  (3)  we have that ${\sf c}_lN^+\cdot {\sf s}_{\bar a}b\neq 0$
and so  by first part of (2), there is a network  $M$ such that
$M^+\cdot{\sf c}_{l}N^+\cdot {\sf s}_{\bar a}b\neq 0$.
Hence $M(f_0,\dots, f_{i-1}, k, f_{i-2}, \ldots$ $, f_{n-2})=b$,
$\nodes(M)=\nodes(N)\cup\set k$, and $M^+\neq 0$, so this property is maintained.

\end{proof}

\end{document}